\theoremstyle{plain}
\newtheorem{thm}{Theorem}[section]
\newtheorem{lem}{Lemma}[section]
\newtheorem{prop}{Proposition}[section]
\newtheorem{remark}{\textbf{Remark}}[section]
\newcommand{\eps}{\epsilon}
\newcommand{\bm}{\boldsymbol}
\newcommand{\Grad}[1]{\nabla #1}
\newcommand{\ph}{\phi}
\newcommand{\be}{\begin{equation}}
\newcommand{\ee}{\end{equation}}
\newcommand{\bse}{\begin{subequations}}
\newcommand{\ese}{\end{subequations}}
\def\benl{\begin{eqnarray*}}
\def\eenl{\end{eqnarray*}}
\def\bphi{\bm{\phi}}
\def\be{\bm{e}}
\def\bx{\bm{x}}
\def\bmu1{\bm{\mu_1}}
\newcommand{\ben}{\begin{eqnarray}}
\newcommand{\een}{\end{eqnarray}}
\newcommand{\beq}{\begin{equation}}
\newcommand{\eeq}{\end{equation}}
\newcommand{\bea}{\begin{array}}
\newcommand{\eea}{\end{array}}
\newcommand{\bef}{\begin{figure}[H]}
\newcommand{\eef}{\end{figure}}
\crefname{hypothesis}{Hypothesis}{Hypotheses}
\title{Global constraints preserving SAV schemes  for   gradient flows}
\author{Qing Cheng\thanks{Department of Applied Mathematics, Illinois Institute of Technology, Chicago, IL 60616, USA 
  (\email{qcheng4@iit.edu)}. }
\and Jie Shen\thanks{Department of Mathematics, Purdue University, West Lafayette, IN 47907, USA  (shen7$@$purdue.edu)}. The work of J.S. is supported in part by NSF  DMS-1720442 and NFSC 11971407.}
\begin{document}
\bibliographystyle{plain}
\graphicspath{ {Figures/} }
\maketitle

\begin{abstract}
  We develop several efficient numerical schemes which  preserve exactly the global constraints  for constrained  gradient flows. Our schemes are based on the SAV approach combined with the Lagrangian multiplier approach. They are as efficient as the SAV schemes for unconstrained gradient flows, i.e., only require solving linear  equations with constant coefficients at each time step plus an additional nonlinear algebraic system which can be solved at negligible cost,  can be unconditionally energy stable, and preserve exactly the global constraints  for constrained gradient flows.    Ample numerical results for phase-field vesicle membrane and optimal partition models are presented  to validate the effectiveness and accuracy of the proposed numerical schemes.
\end{abstract}

\begin{keywords}
 constrained gradient flow, SAV approach, energy stability, phase-field
\end{keywords}


\section{Introduction}
Gradient flows are ubiquitous in science and engineering applications. In the last few decades, a large body  of research has been devoted to developing efficient numerical schemes, particularly time discretization schemes,  for gradient flows.  We refer to the recent review paper \cite{Du.F19}  and the references therein,  for a detailed account on these efforts, see also \cite{SXY19,She.Y19} for 
a presentation of the newly developed  IEQ method \cite{yang2016linear,YANG2017_3phase} and  SAV method (cf. \cite{SXY19,cheng2018multiple})  which have received much attention recently due to their efficiency, flexibility and accuracy.
However, most of the research in this area are concerned with unconstrained gradient flows.
 But many gradient flows  in physical, chemical and biological sciences are  constrained with one or several global  physical constraints, e.g.,  the norm of  multi-component wave functions is preserved in  multi-component Bose-Einstein condensates \cite{bao2004computing},   the norm  of each components is preserved in optimal eigenvalue partition problems \cite{CL1,conti2003optimal},  the stress is  constrained to be negative in  topology optimization problem \cite{stress2016,jeong2014development},  the volume \cite{jing2019second,du2012analysis}  and surface area are preserved in the  phase field vesicle membrane model \cite{du2008numerical,cheng2018multiple,wang2016efficient,guillen2018unconditionally,du2009energetic}, and many others in constrained minimizations.

A highly desirable property of numerical algorithms for gradient flows with physical  constraints is to be able to satisfy  the energy dissipation law  while  preserving  the
physical constraints.  But
how to design numerical schemes which are energy stable while enforcing   physical constraints, such as mass, norm or surface area conservation, is a challenging task. There are essentially three different approaches:
\begin{itemize}
 \item Direct discretization: discretize the constraints along with the gradient flows  for a given order of accuracy, see Section 2.1 for  example. This is a straightforward approach that can be coupled with existing efficient numerical schemes for gradient flows that can be easily implemented.  But its drawback is that the constraints can only be satisfied up to the order of accuracy (usually first- or second-order).
 
 \item Penalty approach: add  suitable penalty terms in the free energy and consider the unconstrained gradient flow with the new penalized free energy.  Its advantage is that efficient numerical methods for unconstrained gradient flows can be directly applied,  and in principle one can approximate the constraints to within arbitrary accuracy by choosing suitable penalty parameters.  Its disadvantage is that large penalty parameters,  which are needed for more accurate approximation of the constraints,  may lead to very stiff systems that are difficult to solve efficiently.  This approach is used in \cite{wang2016efficient,yang2017efficient,cheng2018multiple} for the vesicle membrane model and in \cite{zhuang2019efficient} for the multi-component Bose-Einstein condensates.
 
  \item  Lagrangian multiplier approach: introduce Lagrangian multipliers to enforce exactly the constraints.  This approach is studied mathematically in \cite{Caf.L09}  and numerical in \cite{du2008numerical}  for the optimal eigenvalue partition problem.  The main advantage is that the constraints can be satisfied exactly,  while its drawback is that it will lead to difficulty  to solve nonlinear systems  at each time step. 
\end{itemize}
The goal of this paper is to develop efficient  time discretization for gradient flows with global  constraints  using  the Lagrangian multiplier approach.  To this end,  we shall combine the SAV approach \cite{SXY19} with the Lagrangian multiplier approach \cite{cheng2019new}., hoping to construct schemes which enjoy all advantages of the SAV schemes,  but can also preserve the constraints exactly  using the Lagrangian multiplier approach with negligible additional cost.  Three different approaches will be considered: (i)  The first one is a direct combination of  the SAV approach with the Lagrangian multiplier approach.  The scheme is easy to implement but  we are unable to prove that it is unconditionally energy stable.  (ii) In the second approach,  we replace the dynamic equation for the SAV by another Lagrangian multiplier so that the scheme becomes unconditionally energy stable,  but leading to an additional coupled nonlinear algebraic system for the two Lagrangian multipliers to solve at each time step,  instead of  a nonlinear algebraic equation for just one  Lagrangian multiplier in the first approach.  (iii) In the third approach,  we combine the advantages of the first two approaches to construct a scheme,  which is unconditionally energy stable,  such that the two  Lagrangian multipliers  can be determined sequentially,  instead of a coupled system in the second approach.  All three approaches have essentially the same computational cost as the linear SAV scheme,  presented in Section 2.1,  which is extremely efficient and easy to implement but can only approximate the constraints up to the order of the scheme.
Our numerical results indicate that the first and third approaches  are  generally more efficient and robust than second approach.

The reminder of this paper is structured as follows.  In Section 2,  we  present a general methodology for gradient flows with one global constraint,  and  propose three different approaches to devise efficient numerical schemes which can enforce exactly the constraint.  In Sections 3 and 4,  we apply the general methodology introduced in Section 2
 to the phase field vesicle membrane model with two constraints  and  to the optimal partition model with multiple constraints,  respectively.  In Section 5,  we present numerical experiments to compare the performance of  the three approaches and the penalty approach,  and present several  numerical simulations for  the phase field vesicle membrane model with two constraints  and  to the optimal partition model with multiple constraints.  Some concluding remarks are given in Section 6.

\section{General methodology to preserve global constraints for gradient flows}
\label{sec:main}
We present in this section  a general methodology to preserve global constraints in gradient flows.  To simplify the presentation,  we consider here single-component gradient flows with  a single global constraint.  The  approaches developed here will be extended to problems with multi-components and multi global constraints  in the subsequent sections. 

To fix the idea,  we consider a system with total free energy  in the form 
\begin{equation}\label{orienergy}
 E(\phi)=\int_\Omega \frac12\mathcal{L}\phi\cdot \phi +F(\phi) d\bx,
\end{equation}
under a global constraint 
\begin{equation}
 \frac{d}{dt} H(\phi) =0 \;\text{ with }\;  H(\phi)=\int_\Omega h(\phi)d\bx,
\end{equation}
where $\mathcal{L}$ is certain linear positive operator,  $F(\phi)$ is a nonlinear potential,  and $h(\phi)$ is a function of $\phi$.  Then, by introducing a Lagrange multiplier $\lambda(t)$,   a general gradient flow with the above free energy under the constraint  takes the following form
\begin{equation}\label{grad:flow}
\begin{split}
&\phi_t=-\mathcal{G}\mu,\\
&\mu=\mathcal{L}\phi+F'(\phi)-\lambda \frac{\delta H}{\delta \phi},\\
& \frac{d }{d t} H(\phi) =0,
\end{split}
\end{equation}
where  $\mathcal{G}$  is  a  positive operator describing the relaxation process of the system. 
The boundary conditions can be either one of the following two type
\begin{eqnarray}
&&(i)\mbox{ periodic; or } (ii)\,\,\partial_{\bf n} \phi|_{\partial\Omega}=\partial_{\bf n}\mu|_{\partial\Omega}=0,
\end{eqnarray}
where $\bf n$ is the unit outward normal on the boundary $\partial\Omega$. Taking the inner products of the first two equations with $\mu$ and $-\phi_t$ respectively, summing up the results  using the third equation,  taking integration by part, we obtain the following energy dissipation law:
\begin{equation}
 \frac{d }{d t} E(\phi)=-(\mathcal{G}\mu,\mu),
\end{equation}
where, and in the sequel, $(\cdot, \cdot)$ denotes the inner product in $L^2(\Omega)$. We shall also denote the $L^2$-norm by $\|\cdot\|$. 

We shall first construct a linear scheme based on the SAV approach which only approximate the global constraint,  followed  by  three "essentially" linear   schemes which enforce exactly the global constraint while retaining all essential advantages of the SAV approach.

\subsection{A linear scheme based on the SAV approach}

We start by constructing  first  a linear scheme based on the SAV approach for  \eqref{grad:flow}.
Assuming $\int_{\Omega}F(\phi)d\bx>-C_0$ for some $C_0>0$,  we introduce a scalar auxiliary variable (SAV)  $r(t)=\sqrt{\int_{\Omega}F(\phi)d\bx+C_0}$ and rewrite \eqref{grad:flow} as 
\begin{eqnarray}
&&\partial_t\phi=-\mathcal{G}\mu,\label{sav:1}\\
&&\mu=\mathcal{L}\phi+\frac{r(t)}{\sqrt{\int_{\Omega}F(\phi)d\bx+C_0} }F'(\phi)-\lambda \frac{\delta H}{\delta \phi},\label{sav:2}\\
&&\frac{d }{d t} H(\phi) =0,\label{sav:3}\\
&&r_t=\frac 1{2\sqrt{\int_{\Omega}F(\phi)d\bx+C_0}}({F'(\phi)},\phi_t).\label{sav:4}
\end{eqnarray}
Taking the inner products of the first two equations with $\mu$ and $-\phi_t$ respectively,   summing up the results along with the fourth equation and using the third equation,  we obtain the following energy dissipation law:
\begin{equation}
\frac{d }{dt} \tilde E(\phi)=-(\mathcal{G}\mu,\mu),
\end{equation}
where $\tilde E(\phi)=\int_\Omega \frac12\mathcal{L}\phi\cdot \phi d\bx +r^2$ is a modified energy.
Then, a first-order SAV scheme for the above modified system is
\begin{eqnarray}
&&\frac{\phi^{n+1}-\phi^n}{\delta t}=-\mathcal{G}\mu^{n+1},\label{linear:sav:1b}\\
&&\mu^{n+1}=\mathcal{L}\phi^{n+1}+\frac{r^{n+1}}{\sqrt{\int_{\Omega}F(\phi^n)d\bx+C_0} }F'(\phi^n)-\lambda^{n+1} (\frac{\delta H}{\delta \phi})^n,\label{linear:sav:2b}\\
&&((\frac{\delta H}{\delta \phi})^n,\phi^{n+1}-\phi^n)=0,\label{linear:sav:3b}\\
&&\frac{r^{n+1}-r^n}{\delta t}=\frac 1{2\sqrt{\int_{\Omega}F(\phi^n)d\bx+C_0}}({F'(\phi^n)},\frac{\phi^{n+1}-\phi^n}{\delta t}).\label{linear:sav:4b}
\end{eqnarray}
Taking the inner products of \eqref{linear:sav:1b} with $\mu^{n+1}$ and of \eqref{linear:sav:2b} with $-\frac{\phi^{n+1}-\phi^n}{\delta t}$,  summing up the results and taking into account \eqref{linear:sav:3b}-\eqref{linear:sav:4b},  we have the following:
\begin{thm}
 The  scheme \eqref{linear:sav:1b}-\eqref{linear:sav:4b} is unconditionally energy stable in the sense that
 $$\tilde E(\phi^{n+1})-\tilde E(\phi^{n}) \le -\Delta t(\mathcal{G}\mu^{n+1},\mu^{n+1}),$$
 where $\tilde E(\phi^k)=\int_\Omega \frac12\mathcal{L}\phi^k\cdot \phi^k d\bx +(r^k)^2$.
\end{thm}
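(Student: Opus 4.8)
The plan is to reproduce, at the fully discrete level, exactly the derivation of the continuous energy dissipation law that precedes the scheme. The three ingredients are: pairing the dynamic equation \eqref{linear:sav:1b} with $\delta t\,\mu^{n+1}$, pairing the chemical‑potential equation \eqref{linear:sav:2b} with the increment $-(\phi^{n+1}-\phi^n)$, and multiplying the SAV update \eqref{linear:sav:4b} by $2\delta t\, r^{n+1}$; the discrete constraint \eqref{linear:sav:3b} is what makes the Lagrange multiplier term drop out.

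First I would take the $L^2$ inner product of \eqref{linear:sav:1b} with $\delta t\,\mu^{n+1}$ to obtain $(\phi^{n+1}-\phi^n,\mu^{n+1})=-\delta t(\mathcal{G}\mu^{n+1},\mu^{n+1})$. Next, pairing \eqref{linear:sav:2b} with $-(\phi^{n+1}-\phi^n)$ gives
\begin{equation*}
-(\mu^{n+1},\phi^{n+1}-\phi^n)=-(\mathcal{L}\phi^{n+1},\phi^{n+1}-\phi^n)-\frac{r^{n+1}}{\sqrt{\int_\Omega F(\phi^n)d\bx+C_0}}\,(F'(\phi^n),\phi^{n+1}-\phi^n)+\lambda^{n+1}\left((\tfrac{\delta H}{\delta \phi})^n,\,\phi^{n+1}-\phi^n\right),
\end{equation*}
and the last term vanishes by \eqref{linear:sav:3b}. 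Adding these two relations, the left‑hand sides cancel one another, leaving an identity connecting $(\mathcal{G}\mu^{n+1},\mu^{n+1})$, the $\mathcal{L}$‑term, and the nonlinear cross term $\frac{r^{n+1}}{\sqrt{\int_\Omega F(\phi^n)d\bx+C_0}}(F'(\phi^n),\phi^{n+1}-\phi^n)$. Finally I multiply \eqref{linear:sav:4b} by $2\delta t\, r^{n+1}$, which produces $2r^{n+1}(r^{n+1}-r^n)$ on the left and exactly that same cross term on the right; adding it to the previous identity cancels the nonlinear contribution entirely.

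What remains is purely algebraic: apply $2a(a-b)=a^2-b^2+(a-b)^2$ to the $r$‑terms and, since $\mathcal{L}$ is linear and self‑adjoint, use $(\mathcal{L}\phi^{n+1},\phi^{n+1}-\phi^n)=\frac12\big[(\mathcal{L}\phi^{n+1},\phi^{n+1})-(\mathcal{L}\phi^n,\phi^n)+(\mathcal{L}(\phi^{n+1}-\phi^n),\phi^{n+1}-\phi^n)\big]$. Collecting everything yields the exact discrete identity
\begin{equation*}
\tilde E(\phi^{n+1})-\tilde E(\phi^n)=-\delta t(\mathcal{G}\mu^{n+1},\mu^{n+1})-\frac12\big(\mathcal{L}(\phi^{n+1}-\phi^n),\phi^{n+1}-\phi^n\big)-(r^{n+1}-r^n)^2,
\end{equation*}
and discarding the last two nonnegative terms (positivity of $\mathcal{L}$) gives the claimed inequality. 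I do not anticipate a genuine obstacle here: the only points requiring care are that $\mathcal{L}$ and $\mathcal{G}$ be symmetric positive, so that the quadratic identities and the sign of $(\mathcal{G}\mu^{n+1},\mu^{n+1})$ are as stated, and that the two nonlinear cross terms match with opposite signs — which they do precisely because the \emph{new}-level scalar $r^{n+1}$ multiplies $F'(\phi^n)$ in both \eqref{linear:sav:2b} and \eqref{linear:sav:4b}. Note that this argument in fact produces an equality stronger than the asserted inequality, with the two extra dissipation terms quantifying the numerical dissipation of the first‑order scheme.
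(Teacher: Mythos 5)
Your proposal is correct and follows essentially the same route as the paper, which (in the sentence preceding the theorem) tests \eqref{linear:sav:1b} with $\mu^{n+1}$ and \eqref{linear:sav:2b} with $-\frac{\phi^{n+1}-\phi^n}{\delta t}$, invokes \eqref{linear:sav:3b} to kill the multiplier term and \eqref{linear:sav:4b} to cancel the nonlinear cross term, and then applies the standard quadratic identities. Your version merely rescales the test functions by $\delta t$ and makes explicit the exact dissipation identity that the paper leaves implicit.
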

We now show that the above scheme can be efficiently implemented.
Writing 
\begin{equation}\label{pmr}
\phi^{n+1}=\phi_1^{n+1}+\lambda^{n+1}\phi_2^{n+1},\; \mu^{n+1}=\mu_1^{n+1}+\lambda^{n+1}\mu_2^{n+1},\; r^{n+1}=r_1^{n+1}+\lambda^{n+1}r_2^{n+1}
 \end{equation}
in the above, we find that $(\phi_i^{n+1},\mu_i^{n+1},r_i^{n+1})\;(i=1,2)$ can be determined  as follows:
\begin{eqnarray}
&&\frac{\phi_1^{n+1}-\phi^n}{\delta t}=-\mathcal{G}\mu_1^{n+1},\label{sav:1c}\\
&&\mu_1^{n+1}=\mathcal{L}\phi_1^{n+1}+\frac{r_1^{n+1}}{\sqrt{\int_{\Omega}F(\phi^n)d\bx+C_0} }F'(\phi^n),\label{sav:2c}\\
&&\frac{r_1^{n+1}-r^n}{\delta t}=\frac 1{2\sqrt{\int_{\Omega}F(\phi^n)d\bx+C_0}}({F'(\phi^n)},\frac{\phi_1^{n+1}-\phi^n}{\delta t});\label{sav:4c}
\end{eqnarray}
and
\begin{eqnarray}
&&\frac{\phi_2^{n+1}}{\delta t}=-\mathcal{G}\mu_2^{n+1},\label{sav:1d}\\
&&\mu_2^{n+1}=\mathcal{L}\phi_2^{n+1}+\frac{r_2^{n+1}}{\sqrt{\int_{\Omega}F(\phi^n)d\bx+C_0} }F'(\phi^n)- (\frac{\delta H}{\delta \phi})^n,\label{sav:2d}\\
&&\frac{r_2^{n+1}}{\delta t}=\frac 1{2\sqrt{\int_{\Omega}F(\phi^n)d\bx+C_0}}({F'(\phi^n)},\frac{\phi_2^{n+1}}{\delta t}).\label{sav:4d}
\end{eqnarray}
Since $r_i^{n+1}\, (i=1,2)$ is just a constant which can be easily eliminated by using a block Gaussian elimination, each of the above solutions  can be obtained  by solving two linear systems with constant coefficients of the form (cf. \cite{SXY19} for more detail):
  \begin{equation}\label{matrix2}
   \begin{pmatrix} 
      \frac1{\Delta t} I  &  \mathcal{G}  \\
      \mathcal{L}  & -I  
     \end{pmatrix}   
    \begin{pmatrix} 
      \phi \\
      \mu
         \end{pmatrix} 
   =\bar b.
    \end{equation}
Once we determine $(\phi_i^{n+1},\mu_i^{n+1},r_i^{n+1})\;(i=1,2)$ from the above,  we use \eqref{linear:sav:3b} to determine $\lambda^{n+1}$ explicitly by
\begin{equation}
 \lambda^{n+1}=-((\frac{\delta H}{\delta \phi})^n,\phi_1^{n+1}-\phi^n)/((\frac{\delta H}{\delta \phi})^n, \phi_2^{n+1}).
\end{equation}
Hence, the scheme is very efficient.  However,  the global constraint \eqref{sav:3} is only approximated to first-order.  While we can easily construct second-order energy stable SAV schemes which approximate \eqref{sav:3} to second-order,  we can not preserve \eqref{sav:3} exactly. 
Below,  we show how to modify the scheme \eqref{linear:sav:1b}-\eqref{linear:sav:4b} so that we can preserve \eqref{sav:3} exactly while keeping  its essential advantages.

\subsection{The first approach}
The first-approach is simply to replace the first-order approximation of  \eqref{linear:sav:3b}  by enforcing exactly  \eqref{sav:3}. More precisely,  a modified first-order scheme is as follows:
\begin{eqnarray}
&&\frac{\phi^{n+1}-\phi^n}{\delta t}=-\mathcal{G}\mu^{n+1},\label{sav:1b}\\
&&\mu^{n+1}=\mathcal{L}\phi^{n+1}+\frac{r^{n+1}}{\sqrt{\int_{\Omega}F(\phi^n)d\bx+C_0} }F'(\phi^n)-\lambda^{n+1} (\frac{\delta H}{\delta \phi})^n,\label{sav:2b}\\
&& H(\phi^{n+1}) = H(\phi^{0}),\label{sav:3b}\\
&&\frac{r^{n+1}-r^n}{\delta t}=\frac 1{2\sqrt{\int_{\Omega}F(\phi^n)d\bx+C_0}}({F'(\phi^n)},\frac{\phi^{n+1}-\phi^n}{\delta t}).\label{sav:4b}
\end{eqnarray}
The above scheme can be implemented in essentially the same procedure as the scheme \eqref{linear:sav:1b}-\eqref{linear:sav:4b}.  Indeed,  still writing $(\phi^{n+1},\mu^{n+1})$ as in \eqref{pmr}, we can still determine
$(\phi_i^{n+1},\mu_i^{n+1},\;i=1,2)$ from \eqref{sav:1c}-\eqref{sav:4c} and \eqref{sav:1d}-\eqref{sav:4d}.  The only difference is that we now need to determine
$\lambda^{n+1}$ from \eqref{sav:3b} which leads to a nonlinear algebraic equation for $\lambda^{n+1}$:
\begin{equation}\label{nonl1}
 (h(\phi_1^{n+1}+\lambda^{n+1}\phi_2^{n+1})-h(\phi^n),1)=0.
\end{equation}
The complexity of this nonlinear algebraic equation depends on $h(\phi)$, e.g.,  it will be a quadratic equation if $h(\phi)=\phi^2$ as in some applications. 
\begin{remark}
 The nonlinear algebraic equation \eqref{nonl1}  can   be solved by a Newton iteration whose convergence  depends on a good initial guess.  We can use the linear scheme \eqref{linear:sav:1b}-\eqref{linear:sav:4b}  to produce a good and reliable initial guess so that the Newton iteration will converge very quickly  with negligible cost. 
Hence, the system \eqref{sav:1b}-\eqref{sav:4b} is "essentially" linear as it involves a linear system plus a nonlinear algebraic equation,  and can be efficiently solved. 
\end{remark}

Next, we examine the stability of scheme  \eqref{sav:1b}-\eqref{sav:4b}. Taking the inner products of \eqref{sav:1b} with $\mu^{n+1}$,  of 
\eqref{sav:2b} with $-\frac{\phi^{n+1}-\phi^n}{\delta t}$ and of \eqref{sav:3b} with $2r^{n+1}$,  summing up the results,  we obtain
\begin{equation}
\begin{split}
 \frac1{2\delta t}\{(\mathcal{L}\phi^{n+1},\phi^{n+1})-& (\mathcal{L}\phi^{n},\phi^{n})+ (\mathcal{L}(\phi^{n+1}-\phi^n),\phi^{n+1}-\phi^n)\}\\
 &+\frac 1{\delta t}\{(r^{n+1})^2-(r^n)^2+(r^{n+1}-r^n)^2\}\\
 &=-(\mathcal{G}\mu^{n+1},\mu^{n+1})+\lambda^{n+1}((\frac{\delta H}{\delta \phi})^n, \frac{\phi^{n+1}-\phi^n}{\delta t}).
 \end{split}
\end{equation}
By Taylor expansion, we have
\begin{equation}
(h(\phi^{n+1}),1)- (h(\phi^{n}),1)=((\frac{\delta H}{\delta \phi})^n, {\phi^{n+1}-\phi^n})+\frac 12(h''(\xi^n)({\phi^{n+1}-\phi^n}),{\phi^{n+1}-\phi^n}).
\end{equation}
We can then conclude from the above two relations.
\begin{prop}
  The scheme \eqref{sav:1b}-\eqref{sav:4b} satisfies the following energy law:
 
\begin{equation}
 \tilde E^{n+1}-\tilde E^n + \frac {\lambda^{n+1}}2(h''(\xi^n)({\phi^{n+1}-\phi^n}),{\phi^{n+1}-\phi^n})\le -\delta t(\mathcal{G}\mu^{n+1},\mu^{n+1}),
\end{equation}
where 
$$\tilde E^n =\frac1{2} (\mathcal{L}\phi^{n},\phi^{n})+(r^n)^2.$$
\end{prop}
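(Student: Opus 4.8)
The plan is to reproduce the discrete energy estimate that underlies the linear SAV scheme \eqref{linear:sav:1b}-\eqref{linear:sav:4b}, and then to dispose of the Lagrange--multiplier contribution by exploiting the \emph{exact} constraint \eqref{sav:3b} in place of its first-order approximation \eqref{linear:sav:3b}, paying for it with the indefinite Taylor-remainder term that appears in the statement.

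First I would test \eqref{sav:1b} with $\mu^{n+1}$, test \eqref{sav:2b} with $-(\phi^{n+1}-\phi^n)/\delta t$, multiply \eqref{sav:4b} by $2r^{n+1}$, and add the three scalar relations. The cross terms $\pm\frac1{\delta t}(\phi^{n+1}-\phi^n,\mu^{n+1})$ cancel, and the two copies of $\frac{r^{n+1}}{\sqrt{\int_{\Omega}F(\phi^n)d\bx+C_0}}(F'(\phi^n),(\phi^{n+1}-\phi^n)/\delta t)$ cancel as well. Using the symmetry and positivity of $\mathcal{L}$ together with the elementary identities $2(\mathcal{L}a,a-b)=(\mathcal{L}a,a)-(\mathcal{L}b,b)+(\mathcal{L}(a-b),a-b)$ and $2r^{n+1}(r^{n+1}-r^n)=(r^{n+1})^2-(r^n)^2+(r^{n+1}-r^n)^2$, this produces exactly the first displayed identity preceding the proposition; its only non-dissipative right-hand term is $\lambda^{n+1}\big((\frac{\delta H}{\delta\phi})^n,(\phi^{n+1}-\phi^n)/\delta t\big)$.

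The crucial step is to rewrite that term. Here I would apply \eqref{sav:3b} at the two consecutive levels $n$ and $n+1$ --- for $n\ge 1$ the relation $H(\phi^n)=H(\phi^0)$ holds because the scheme was already applied at the previous step, and for $n=0$ it is \eqref{sav:3b} itself --- to conclude $H(\phi^{n+1})=H(\phi^n)$, i.e. $(h(\phi^{n+1}),1)-(h(\phi^n),1)=0$. Substituting this into the Taylor expansion displayed just before the proposition yields $\big((\frac{\delta H}{\delta\phi})^n,\phi^{n+1}-\phi^n\big)=-\frac12\big(h''(\xi^n)(\phi^{n+1}-\phi^n),\phi^{n+1}-\phi^n\big)$, so the offending term becomes exactly $-\frac{\lambda^{n+1}}{2\delta t}\big(h''(\xi^n)(\phi^{n+1}-\phi^n),\phi^{n+1}-\phi^n\big)$.

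Finally I would substitute this back, multiply the identity through by $\delta t$, move the $h''$-term to the left, and recognize $\tilde E^k=\frac12(\mathcal{L}\phi^k,\phi^k)+(r^k)^2$, which gives
\[
\begin{aligned}
\tilde E^{n+1}-\tilde E^n+\frac{\lambda^{n+1}}2\big(h''(\xi^n)(\phi^{n+1}-\phi^n),\phi^{n+1}-\phi^n\big)
&=-\delta t(\mathcal{G}\mu^{n+1},\mu^{n+1})\\
&\quad-\frac12(\mathcal{L}(\phi^{n+1}-\phi^n),\phi^{n+1}-\phi^n)-(r^{n+1}-r^n)^2 .
\end{aligned}
\]
Dropping the two manifestly nonnegative terms $\frac12(\mathcal{L}(\phi^{n+1}-\phi^n),\phi^{n+1}-\phi^n)\ge 0$ (positivity of $\mathcal{L}$) and $(r^{n+1}-r^n)^2\ge 0$ on the right-hand side delivers the asserted inequality. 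I do not anticipate any genuine analytic obstacle: the computation is the standard SAV energy estimate. The only points demanding care are (i) invoking \eqref{sav:3b} at two consecutive levels so that the increment of $H$ is literally zero and the Taylor remainder accounts for the \emph{entire} quantity $\big((\frac{\delta H}{\delta\phi})^n,\phi^{n+1}-\phi^n\big)$; and (ii) observing that, because neither $\lambda^{n+1}$ nor $h''(\xi^n)$ has an a priori sign, this remainder cannot be absorbed into the left-hand side --- which is precisely why the result is a modified energy law rather than unconditional energy dissipation.
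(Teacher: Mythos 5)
Your proposal is correct and follows essentially the same route as the paper: the same three testings (with $\mu^{n+1}$, $-(\phi^{n+1}-\phi^n)/\delta t$, and $2r^{n+1}$), the same Taylor expansion of $h$, and the same use of the exact constraint at consecutive levels to convert $\bigl((\tfrac{\delta H}{\delta\phi})^n,\phi^{n+1}-\phi^n\bigr)$ into the quadratic remainder. You merely spell out the details the paper leaves implicit (including correctly pairing $2r^{n+1}$ with \eqref{sav:4b} rather than the mislabeled \eqref{sav:3b}).
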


\begin{remark}
 Assuming $h''(\phi)\ge 0$ for all $\phi$ and $\lambda^{n+1}\ge 0$ for all $n$,  then the above result indicates that  the scheme \eqref{sav:1b}-\eqref{sav:4b} is unconditionally energy stable.  Note that for some applications, we have $h''(\phi)\ge 0$ and one can show that $\lambda(t)>0$  (cf. \cite{Caf.L09}).  But we are unable to show  $\lambda^{n+1}\ge 0$ for all $n$ under suitable conditions. However,  our numerical results indicate that this is true at least for the examples we considered in this paper.
\end{remark}

\subsection{The second approach}
The main drawback of the first approach is that we can not rigorously prove that the scheme is energy dissipative.  We present below an approach which is efficient  as  the first approach but is energy stable.  The key idea is to  introduce another Lagrange  multiplier $\eta(t)$ to enforce the energy dissipation.  More precisely,  we rewrite \eqref{sav:1}-\eqref{sav:4} as follows:
\begin{eqnarray}
&&\partial_t\phi=-\mathcal{G}\mu,\label{lsav:1}\\
&&\mu=\mathcal{L}\phi+\eta(t) F'(\phi)-\lambda(t) \frac{\delta H}{\delta \phi},\label{lsav:2}\\
&&\frac{d }{d t} \int_\Omega h(\phi)d\bx =0,\label{lsav:3}\\
&&\frac{d }{d t} \int_\Omega F(\phi)d\bx=\eta(t)({F'(\phi)},\phi_t)-\lambda(t) (\frac{\delta H}{\delta \phi}, \phi_t).\label{lsav:4}
\end{eqnarray}
Note that the last term in \eqref{lsav:4} is zero thanks to \eqref{lsav:3}.  We added this zero term here for the sake of constructing energy stable schemes below.

Taking the inner products of the first two equations with $\mu$ and $-\phi_t$ respectively, summing up the results along with the fourth-equation and using the third equation,  we obtain the following energy dissipation law:
\begin{equation}
\frac{d }{d t}  E(\phi)=-(\mathcal{G}\mu,\mu),
\end{equation}
where $ E(\phi)$ is the original energy defined in
\eqref{orienergy}.

For example,  a second-order  scheme based on Crank-Nicolson can be constructed as follows:
\begin{eqnarray}
&&\frac{\phi^{n+1}-\phi^n}{\delta t}=-\mathcal{G}\mu^{n+1/2},\label{lsav:1b}\\
&&\mu^{n+1/2}=\mathcal{L}\phi^{n+1/2}+{\eta^{n+1/2}}F'(\phi^{*,n+1/2})-\lambda^{n+1/2} ({\frac{\delta H}{\delta \phi}})^{*,n+1/2},\label{lsav:2b}\\
&& H(\phi^{n+1})=H(\phi^{0}),\label{lsav:3b}\\
&&\int_\Omega F(\phi^{n+1})- F(\phi^{n})d\bx =\eta^{n+1/2}({F'(\phi^{*,n+1/2})},{\phi^{n+1}-\phi^n})\label{lsav:4b}\\
&&\hskip 1cm -\lambda^{n+1/2} ( ({\frac{\delta H}{\delta \phi}})^{*,n+1/2}, {\phi^{n+1}-\phi^n}),\nonumber
\end{eqnarray}
where $f^{n+1/2}=\frac 12(f^{n+1}+f^n)$ and $f^{*,n+1/2}=\frac 12(3f^n-f^{n-1})$  for any sequence $\{f^n\}$.  Note that unlike in the continuous case,  the last term in \eqref{lsav:4b} is no longer zero,  it is a second-order approximation to zero.  This term is necessary for the unconditional stability that we show below.
 Taking the inner products of \eqref{lsav:1b} with $\mu^{n+1/2}$ and  of 
\eqref{lsav:2b} with $-\frac{\phi^{n+1}-\phi^n}{\delta t}$,  summing up the results along with \eqref{lsav:4b},  we immediately derive the following results:
\begin{thm}
The scheme \eqref{lsav:1b}-\eqref{lsav:4b} is unconditionally energy stable in the sense that
$$E(\phi^{n+1})-E(\phi^n)= -\delta t(\mathcal{G}\mu^{n+1/2},\mu^{n+1/2}),$$
where $E(\phi)$ is  the original energy defined in \eqref{orienergy}.
\end{thm}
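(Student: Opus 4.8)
The plan is to reproduce, at the fully discrete level, the energy estimate that was derived for the continuous system \eqref{lsav:1}--\eqref{lsav:4}. First I would pair \eqref{lsav:1b} with $\mu^{n+1/2}$ and pair \eqref{lsav:2b} with $-\frac{\phi^{n+1}-\phi^n}{\delta t}$, then add the two resulting identities. The cross terms $\pm\big(\mu^{n+1/2},\frac{\phi^{n+1}-\phi^n}{\delta t}\big)$ cancel, so that after multiplying through by $\delta t$ one is left with
\[
 (\mathcal{L}\phi^{n+1/2},\phi^{n+1}-\phi^n)+\eta^{n+1/2}\big(F'(\phi^{*,n+1/2}),\phi^{n+1}-\phi^n\big)-\lambda^{n+1/2}\big((\tfrac{\delta H}{\delta\phi})^{*,n+1/2},\phi^{n+1}-\phi^n\big)=-\delta t(\mathcal{G}\mu^{n+1/2},\mu^{n+1/2}).
\]

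Next, I would treat the three terms on the left separately. For the $\mathcal{L}$-term, since $\mathcal{L}$ is linear and positive (hence associated with a symmetric bilinear form) and $\phi^{n+1/2}=\frac12(\phi^{n+1}+\phi^n)$, the standard Crank--Nicolson telescoping identity $(\mathcal{L}\phi^{n+1/2},\phi^{n+1}-\phi^n)=\frac12(\mathcal{L}\phi^{n+1},\phi^{n+1})-\frac12(\mathcal{L}\phi^n,\phi^n)$ applies; this is also the step where the admissibility of the boundary conditions in \eqref{grad:flow} is used, so that no boundary contributions are generated. For the remaining two terms I would invoke \eqref{lsav:4b} verbatim: it states precisely that $\eta^{n+1/2}\big(F'(\phi^{*,n+1/2}),\phi^{n+1}-\phi^n\big)-\lambda^{n+1/2}\big((\tfrac{\delta H}{\delta\phi})^{*,n+1/2},\phi^{n+1}-\phi^n\big)=\int_\Omega F(\phi^{n+1})-F(\phi^n)\,d\bx$. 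Substituting these two identities into the displayed relation and regrouping yields $E(\phi^{n+1})-E(\phi^n)=-\delta t(\mathcal{G}\mu^{n+1/2},\mu^{n+1/2})$ with $E$ the original energy \eqref{orienergy}, which is the claim.

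I expect no genuine analytic obstacle; the work is bookkeeping, and the two subtle points are the following. First, the $\lambda^{n+1/2}$-term produced by pairing \eqref{lsav:2b} with the discrete time derivative must be matched sign-for-sign by the $\lambda^{n+1/2}$-term that was deliberately inserted into \eqref{lsav:4b} (the ``second-order approximation to zero''); without that term an uncontrolled residual of order $\lambda^{n+1/2}\,\delta t\,\|\phi^{n+1}-\phi^n\|$ would remain, and only a stability estimate with an error term, not the exact identity, could be claimed. Second, unlike the first approach, where pairing with $2r^{n+1}$ and a Taylor expansion of $H$ leaves a remainder $\frac{\lambda^{n+1}}2(h''(\xi^n)(\phi^{n+1}-\phi^n),\phi^{n+1}-\phi^n)$, no such remainder arises here: the constraint $H(\phi^{n+1})=H(\phi^{0})$ in \eqref{lsav:3b} is never differentiated or expanded in the energy argument — it is used only to guarantee exact preservation of the constraint — so the energy balance rests entirely on \eqref{lsav:4b} and holds as an unconditional equality.
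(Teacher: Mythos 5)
Your proposal is correct and follows exactly the paper's (very terse) proof: pair \eqref{lsav:1b} with $\mu^{n+1/2}$ and \eqref{lsav:2b} with $-\frac{\phi^{n+1}-\phi^n}{\delta t}$, cancel the cross terms, telescope the $\mathcal{L}$-term via the Crank--Nicolson identity, and absorb the nonlinear and constraint terms using \eqref{lsav:4b} verbatim. Your two remarks --- that the $\lambda^{n+1/2}$-term deliberately inserted in \eqref{lsav:4b} is what makes the identity exact, and that \eqref{lsav:3b} plays no role in the energy balance --- accurately capture why this scheme is stable while the first approach is not.
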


The above scheme can be efficiently implemented as the previous two schemes.  Indeed, writing 
\begin{equation}\label{lpmr}
\phi^{n+1}=\phi_1^{n+1}+\eta^{n+1/2}\phi_2^{n+1}+\lambda^{n+1/2}\phi_3^{n+1},\; \mu^{n+1}=\mu_1^{n+1}+\eta^{n+1/2}\mu_2^{n+1}+\lambda^{n+1/2}\mu_3^{n+1},
 \end{equation}
in the scheme \eqref{lsav:1b}-\eqref{lsav:4b},  we find that $(\phi_i^{n+1},\mu_i^{n+1})\;(i=1,2,3)$ can be determined  as follows:
\begin{eqnarray}
&&\frac{\phi_1^{n+1}-\phi^n}{\delta t}=-\mathcal{G}\mu_1^{n+1/2},\label{lsav:1c}\\
&&\mu_1^{n+1/2}=\mathcal{L}\phi_1^{n+1/2};\label{lsav:2c}
\end{eqnarray}

\begin{eqnarray}
&&\frac{\phi_2^{n+1}}{\delta t}=-\mathcal{G}\mu_2^{n+1/2},\label{lsav:1d}\\
&&\mu_2^{n+1/2}=\mathcal{L}\phi_2^{n+1/2}+F'(\phi^{*,n+1/2})\label{lsav:2d};\end{eqnarray}
and
\begin{eqnarray}
&&\frac{\phi_3^{n+1}}{\delta t}=-\mathcal{G}\mu_3^{n+1/2},\label{lsav:1e}\\
&&\mu_3^{n+1/2}=\mathcal{L}\phi_3^{n+1/2}-({\frac{\delta H}{\delta \phi}})^{*,n+1/2}\label{lsav:2e}.\end{eqnarray}
The above three linear systems with constant coefficients can be easily solved.
Once we determine $(\phi_i^{n+1},\mu_i^{n+1})\;(i=1,2,3)$ from the above,  it remains to solve for $(\eta^{n+1/2},\lambda^{n+1/2})$.  To this end, we plug \eqref{lpmr} in \eqref{lsav:3b} and \eqref{lsav:4b},  leading to a coupled nonlinear algebraic system for $(\eta^{n+1/2},\lambda^{n+1/2})$.  The complexity of this nonlinear algebraic equation depends on $F(\phi)$ and $h(\phi)$.
\begin{remark}
The coupled nonlinear algebraic system for $(\eta^{n+1/2},\lambda^{n+1/2})$ can be 
 solved by  Newton iteration. Since the exact solution $\eta(t)\equiv 1$, we can use $1$ as the initial guess for $\eta^{n+1/2}$, and still use the linear scheme  \eqref{lsav:1b}-\eqref{lsav:1d}, or  its second-order version based on Crank-Nicolson, to produce an initial guess for $\lambda^{n+1/2}$. With this set of initial guess,  the Newton iteration for the coupled nonlinear algebraic system would converge quickly if $\Delta t$ is not too large.  
\end{remark}

\subsection{The third approach}
In the second approach, one needs to solve a coupled nonlinear algebraic system for $(\lambda^{n+1/2},\eta^{n+1/2})$. The Newton's iteration may fail to converge if $\delta t$ is not sufficiently small. We propose below a modified version in which one can solve $\lambda^{n+1/2}$ first as in the first approach and then determine $\eta^{n+1/2}$ from a nonlinear algebraic equation:

\begin{eqnarray}
&&\frac{\phi^{n+1}-\phi^n}{\delta t}=-\mathcal{G}\mu^{n+1/2},\label{lsav:1b2}\\
&&\mu^{n+1/2}=\mathcal{L}\phi^{n+1/2}+{\eta^{n+1/2}}F'(\phi^{*,n+1/2})-\lambda^{n+1/2} ({\frac{\delta H}{\delta \phi}})^{*,n+1/2},\label{lsav:2b2}\\
&& H(\bar\phi^{n+1})=H(\bar \phi^{0}),\label{lsav:3b2}\\
&&\int_\Omega F(\phi^{n+1})- F(\phi^{n})d\bx =\eta^{n+1/2}({F'(\phi^{*,n+1/2})},{\phi^{n+1}-\phi^n})\label{lsav:4b2}\\
&&\hskip 1cm-\lambda^{n+1/2} ( ({\frac{\delta H}{\delta \phi}})^{*,n+1/2}, {\phi^{n+1}-\phi^n}),\nonumber
\end{eqnarray}
where  $f^{n+1/2}=\frac 12(f^{n+1}+f^n)$ and $f^{*,n+1/2}= \frac12(3f^n-f^{n-1})$  for any sequence $\{g^n\}$, and $\bar \phi^{n+1}=\phi_1^{n+1}+\phi_2^{n+1} +\lambda^{n+1/2} \phi_3^{n+1}$ with $(\phi_i^{n+1}, \;i=1,2,3)$ being the solutions of 
\eqref{lsav:1c}-\eqref{lsav:2c}, \eqref{lsav:1d}-\eqref{lsav:2d} and \eqref{lsav:1e}-\eqref{lsav:2e} respectively.

\begin{remark}
The only difference between the above scheme and the scheme \eqref{lsav:1b}-\eqref{lsav:4b} is that 
$\phi^{n+1}$ in \eqref{lsav:3b}  is replaced by $\bar\phi^{n+1}$ in \eqref{lsav:3b2} which is independent of  $\eta^{n+1/2}$. This is reasonable since $\eta^{n+1/2}$ is an approximation of 1.  As a consequence,  the global  constraint is satisfied with $\{\bar\phi^k\}$ instead of $\{\phi^k\}$.

\end{remark}

The scheme \eqref{lsav:1b2}-\eqref{lsav:4b2} can be efficiently implemented as follows: 
\begin{itemize}
 \item Write $(\phi^{n+1},\mu^{n+1})$ as in \eqref{lpmr} and solve $(\phi_i^{n+1},\mu_i^{n+1},\;i=1,2,3)$ from \eqref{lsav:1c}-\eqref{lsav:2c}, \eqref{lsav:1d}-\eqref{lsav:2d} and \eqref{lsav:1e}-\eqref{lsav:2e}.

\item  Determine $\lambda^{n+1/2}$ from \eqref{lsav:3b2}. This is a nonlinear algebraic equation for $\lambda^{n+1/2}$,  so it can be solved with  Newton iteration by  using the linear scheme   \eqref{lsav:1b}-\eqref{lsav:1d},  or  its second-order version based on Crank-Nicolson,  to produce an initial guess for $\lambda^{n+1/2}$.
\item With $\lambda^{n+1/2}$  known,  Determine $\eta^{n+1/2}$ explicitly from \eqref{lsav:4b2} which is a nonlinear algebraic equation for $\eta^{n+1/2}$.
\end{itemize}

Exactly as for the scheme \eqref{lsav:1b}-\eqref{lsav:4b},  we have the following result:
\begin{thm}
The scheme \eqref{lsav:1b2}-\eqref{lsav:4b2} is unconditionally energy stable in the sense that
$$E(\phi^{n+1})-E(\phi^n)= -\delta t(\mathcal{G}\mu^{n+1/2},\mu^{n+1/2}),$$
where $E(\phi)$ is  the original energy defined in \eqref{orienergy}.
\end{thm}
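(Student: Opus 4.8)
The plan is to reproduce verbatim the energy identity already established for the scheme \eqref{lsav:1b}-\eqref{lsav:4b}, observing that the dynamical equations \eqref{lsav:1b2}, \eqref{lsav:2b2} and \eqref{lsav:4b2} are literally identical to \eqref{lsav:1b}, \eqref{lsav:2b} and \eqref{lsav:4b}; the third approach differs only in that the constraint \eqref{lsav:3b} has been replaced by \eqref{lsav:3b2}. The crucial point is that the constraint equation plays no role in the stability estimate, so the same computation carries over unchanged.

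Concretely, first I would take the $L^2$ inner product of \eqref{lsav:1b2} with $\mu^{n+1/2}$ to obtain $\frac1{\delta t}(\phi^{n+1}-\phi^n,\mu^{n+1/2})=-(\mathcal{G}\mu^{n+1/2},\mu^{n+1/2})$, then take the inner product of \eqref{lsav:2b2} with $-\frac{\phi^{n+1}-\phi^n}{\delta t}$, and add the two relations. Using the symmetry of $\mathcal{L}$ together with $\phi^{n+1/2}=\frac12(\phi^{n+1}+\phi^n)$, the term $(\mathcal{L}\phi^{n+1/2},\phi^{n+1}-\phi^n)$ telescopes exactly into $\frac12\big[(\mathcal{L}\phi^{n+1},\phi^{n+1})-(\mathcal{L}\phi^n,\phi^n)\big]$. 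After this step, the only contributions left besides this $\mathcal{L}$-energy difference and $-(\mathcal{G}\mu^{n+1/2},\mu^{n+1/2})$ are $-\frac1{\delta t}\eta^{n+1/2}(F'(\phi^{*,n+1/2}),\phi^{n+1}-\phi^n)+\frac1{\delta t}\lambda^{n+1/2}\big(({\frac{\delta H}{\delta \phi}})^{*,n+1/2},\phi^{n+1}-\phi^n\big)$.

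Next I would invoke \eqref{lsav:4b2}, which states precisely that this combination equals $-\frac1{\delta t}\int_\Omega\big[F(\phi^{n+1})-F(\phi^n)\big]\,d\bx$; here it is essential that the $\lambda$-weighted term was deliberately inserted into \eqref{lsav:4b2}, since it is exactly what cancels the corresponding $\lambda$-term coming from \eqref{lsav:2b2}. Substituting and recalling $E(\phi)=\int_\Omega\frac12\mathcal{L}\phi\cdot\phi+F(\phi)\,d\bx$ yields $E(\phi^{n+1})-E(\phi^n)=-\delta t(\mathcal{G}\mu^{n+1/2},\mu^{n+1/2})$, which is the asserted identity.

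I do not anticipate a genuine obstacle: the one thing that must be checked is that the replacement of $\phi^{n+1}$ by $\bar\phi^{n+1}$ in the constraint \eqref{lsav:3b2} is invisible to the computation above, precisely because neither \eqref{lsav:3b2} nor the original \eqref{lsav:3b} is used anywhere in it. One should of course keep in mind the standing structural hypotheses — $\mathcal{L}$ symmetric positive, so that the telescoping is valid and $E$ is a bona fide energy, and $\mathcal{G}$ positive, so that the right-hand side is nonpositive and the identity genuinely expresses dissipation.
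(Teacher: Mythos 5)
Your proposal is correct and follows exactly the argument the paper intends: the paper proves this theorem by the remark ``Exactly as for the scheme \eqref{lsav:1b}--\eqref{lsav:4b},'' and that earlier proof is precisely your computation --- pairing \eqref{lsav:1b2} with $\mu^{n+1/2}$, \eqref{lsav:2b2} with $-\frac{\phi^{n+1}-\phi^n}{\delta t}$, and summing with \eqref{lsav:4b2} so that the inserted $\lambda$-term cancels. You also correctly identify the one point worth checking, namely that the constraint equation (whether \eqref{lsav:3b} or \eqref{lsav:3b2}) never enters the energy estimate, which is exactly why the proof carries over verbatim.
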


\subsection{Stabilization and  adaptive time stepping}
For problems with stiff nonlinear terms, one may have to use very small time steps to obtain accurate results with any of the three approaches above. In order to allow larger time steps while achieving desired accuracy, we may add suitable stabilization and use  adaptive time stepping. 
\subsubsection{Stabilization}
Instead of solving \eqref{grad:flow}, we consider a perturbed  system with two additional stabilization terms
\begin{equation}\label{grad:flow2}
\begin{split}
&\phi_t=-\mathcal{G}\mu,\\
&\mu=\mathcal{L}\phi+\epsilon_1 \phi_{tt}+\epsilon_2 \mathcal{L}\phi_{tt}+F'(\phi)-\lambda \frac{\delta H}{\delta \phi},\\
& \frac{d }{d t} H(\phi) =0,
\end{split}
\end{equation}
where $\epsilon_i,\;i=1,2$ are two small stabilization constants whose choices will depend on how stiff are the nonlinear terms. It is easy to see that the above system is a gradient flow with a perturbed free energy $E_\epsilon(\phi)=E(\phi)+\frac{\epsilon_1}2(\phi_t,\phi_t)+\frac{\epsilon_2}2(\mathcal{L}\phi_t,\phi_t)$ and satisfies the following energy law:
\begin{equation}
 \frac d{dt}E_\epsilon(\phi)=-(\mathcal{G}\mu,\mu).
\end{equation}
The schemes presented before for \eqref{grad:flow} can all be easily extended for  \eqref{grad:flow2} while keeping the same simplicity. For example, a second order scheme based on the second approach is:
\begin{eqnarray}
&&\frac{\phi^{n+1}-\phi^n}{\delta t}=-\mathcal{G}\mu^{n+1/2},\label{lsav:1c2}\\
&&\mu^{n+1/2}=\mathcal{L}\phi^{n+1/2}+ \frac{\epsilon_1}{(\delta t)^2}(\phi^{n+1}-2\phi^n+\phi^{n-1})
\\&&\hskip 1cm +\frac{\epsilon_2}{(\delta t)^2}\mathcal{L}(\phi^{n+1}-2\phi^n+\phi^{n-1}) \label{lsav:2c2}
\\&&\hskip 1cm +{\eta^{n+1/2}}F'(\phi^{*,n+1/2})-\lambda^{n+1/2} ({\frac{\delta H}{\delta \phi}})^{*,n+1/2},\nonumber\\
&& H(\phi^{n+1})=H(\phi^{0}),\label{lsav:3c2}\\
&&\int_\Omega F(\phi^{n+1})- F(\phi^{n})d\bx =\eta^{n+1/2}({F'(\phi^{*,n+1/2})},{\phi^{n+1}-\phi^n})\label{lsav:4c2}\\
&&\hskip 1cm-\lambda^{n+1/2} ( ({\frac{\delta H}{\delta \phi}})^{*,n+1/2}, {\phi^{n+1}-\phi^n}),\nonumber
\end{eqnarray}
where$f^{n+1/2}=\frac 12(f^{n+1}+f^n)$ and $f^{*,n+1/2}=\frac 12(3f^n-f^{n-1})$  for any sequence $\{f^n\}$. 

Taking the inner products of \eqref{lsav:1c2} with $\mu^{n+1/2}$ and  of 
\eqref{lsav:2c2} with $-\frac{\phi^{n+1}-\phi^n}{\delta t}$, summing up the results along with \eqref{lsav:4c2} and dropping some unnecessary terms, we immediately derive the following results:
\begin{thm}
The scheme \eqref{lsav:1c2}-\eqref{lsav:4c2} is unconditionally energy stable in the sense that
$$E_\epsilon^{n+1}-E_\epsilon^n\le -\delta t(\mathcal{G}\mu^{n+1/2},\mu^{n+1/2}),$$
where $E_\epsilon^k=E(\phi^k)+\frac{\epsilon_1}{2}(\frac{\phi^k-\phi^{k-1}}{\delta t},\frac{\phi^k-\phi^{k-1}}{\delta t})+\frac{\epsilon_2}{2}(\mathcal{L}\frac{\phi^k-\phi^{k-1}}{\delta t},\frac{\phi^k-\phi^{k-1}}{\delta t}) $ with $E(\phi)$ being the original free energy defined in \eqref{orienergy}.
\end{thm}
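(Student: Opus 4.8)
The plan is to reproduce, with the two extra stabilization terms carried along, the energy argument already used for Theorems~\ref{thm:2.2}--style results (pair the evolution equations with their natural test functions, add, and use the auxiliary equation \eqref{lsav:4c2} to absorb the nonlinear and Lagrange-multiplier contributions). Concretely, I would first take the inner product of \eqref{lsav:1c2} with $\mu^{n+1/2}$, which gives $\frac1{\delta t}(\phi^{n+1}-\phi^n,\mu^{n+1/2})=-(\mathcal{G}\mu^{n+1/2},\mu^{n+1/2})$, and the inner product of \eqref{lsav:2c2} with $-\frac{\phi^{n+1}-\phi^n}{\delta t}$. Adding the two, the terms $\pm\frac1{\delta t}(\phi^{n+1}-\phi^n,\mu^{n+1/2})$ cancel and I am left with an identity whose right-hand side is $-(\mathcal{G}\mu^{n+1/2},\mu^{n+1/2})$ and whose left-hand side is the sum of five pieces, each tested against $\frac{\phi^{n+1}-\phi^n}{\delta t}$: the $\mathcal{L}\phi^{n+1/2}$ term, the two $\epsilon_i$-stabilization terms, the $\eta^{n+1/2}F'(\phi^{*,n+1/2})$ term, and the $-\lambda^{n+1/2}(\frac{\delta H}{\delta\phi})^{*,n+1/2}$ term.

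Next I would divide \eqref{lsav:4c2} by $\delta t$, which expresses $\eta^{n+1/2}\bigl(F'(\phi^{*,n+1/2}),\frac{\phi^{n+1}-\phi^n}{\delta t}\bigr)$ as $\frac1{\delta t}\int_\Omega\bigl(F(\phi^{n+1})-F(\phi^n)\bigr)\,d\bx$ plus $\lambda^{n+1/2}\bigl((\frac{\delta H}{\delta\phi})^{*,n+1/2},\frac{\phi^{n+1}-\phi^n}{\delta t}\bigr)$. Substituting this into the identity from the first step, the two Lagrange-multiplier contributions cancel exactly — this is precisely why the "redundant" $\lambda$-term was inserted in \eqref{lsav:4c2} — and what remains is $(\mathcal{L}\phi^{n+1/2},\frac{\phi^{n+1}-\phi^n}{\delta t})$, the two stabilization terms, and $\frac1{\delta t}\int_\Omega(F(\phi^{n+1})-F(\phi^n))\,d\bx$, summing to $-(\mathcal{G}\mu^{n+1/2},\mu^{n+1/2})$.

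Finally I would telescope. Using the symmetry of $\mathcal{L}$, $(\mathcal{L}\phi^{n+1/2},\frac{\phi^{n+1}-\phi^n}{\delta t})=\frac1{2\delta t}\bigl((\mathcal{L}\phi^{n+1},\phi^{n+1})-(\mathcal{L}\phi^n,\phi^n)\bigr)$. For the stabilization terms I would apply the polarization identity $2(\mathcal{L}(a-b),a)=(\mathcal{L}a,a)-(\mathcal{L}b,b)+(\mathcal{L}(a-b),a-b)$ (valid for any symmetric operator, in particular for $\mathcal{L}$ and for the identity) with $a=\phi^{n+1}-\phi^n$ and $b=\phi^n-\phi^{n-1}$; keeping track of the powers of $\delta t$, the term $\frac{\epsilon_1}{(\delta t)^2}(\phi^{n+1}-2\phi^n+\phi^{n-1})$ tested against $\frac{\phi^{n+1}-\phi^n}{\delta t}$ becomes $\frac{\epsilon_1}{2\delta t}\bigl(\|\frac{\phi^{n+1}-\phi^n}{\delta t}\|^2-\|\frac{\phi^n-\phi^{n-1}}{\delta t}\|^2\bigr)$ plus the nonnegative remainder $\frac{\epsilon_1}{2\delta t}\|\frac{\phi^{n+1}-2\phi^n+\phi^{n-1}}{\delta t}\|^2$, and similarly for the $\epsilon_2\mathcal{L}$ term, where positivity of $\mathcal{L}$ makes the corresponding remainder nonnegative. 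Dropping these two nonnegative remainders (the "unnecessary terms" in the statement), multiplying through by $\delta t$, and recalling $E(\phi)=\int_\Omega\bigl(\frac12\mathcal{L}\phi\cdot\phi+F(\phi)\bigr)\,d\bx$, I obtain exactly $E_\epsilon^{n+1}-E_\epsilon^n\le-\delta t(\mathcal{G}\mu^{n+1/2},\mu^{n+1/2})$ with $E_\epsilon^k$ as defined in the statement.

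I do not expect a genuine obstacle here; the argument is essentially bookkeeping. The two points that require care are (i) tracking the $\delta t$-powers in the stabilization terms so the polarization identity yields precisely the $\frac{\epsilon_i}{2}\|\cdot/\delta t\|^2$ pieces appearing in $E_\epsilon^k$, and (ii) recognizing that the $\lambda$-term deliberately added to the right-hand side of \eqref{lsav:4c2} is exactly what cancels the Lagrange-multiplier contribution coming from \eqref{lsav:2c2}, so that the multiplier never enters the energy balance and the constraint equation \eqref{lsav:3c2} is not even needed for stability.
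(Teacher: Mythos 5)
Your proposal is correct and follows exactly the route the paper takes (the paper only sketches it in one sentence: pair \eqref{lsav:1c2} with $\mu^{n+1/2}$ and \eqref{lsav:2c2} with $-\frac{\phi^{n+1}-\phi^n}{\delta t}$, add \eqref{lsav:4c2}, and drop the nonnegative remainders); your bookkeeping of the $\delta t$ powers in the stabilization terms and your observation that the inserted $\lambda$-term in \eqref{lsav:4c2} is what cancels the multiplier contribution are both accurate. No gaps.
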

It is clear that the above scheme can be efficiently implemented as the scheme 
\eqref{lsav:1b}-\eqref{lsav:4b}.

\subsection{Adaptive time stepping}
A main advantage of unconditionally stable schemes, such as the schemes using second and third approaches, is that one can choose  time steps solely based on the accuracy requirement. Hence, a suitable adaptive time stepping can greatly improve the efficiency. There are many different strategies for adaptive time stepping, we refer to \cite{SXY19} for some simple strategies which have proven to be effective for the SAV related approaches. 

\section{A single component system with multiple constraints}
The three approaches presented in the last section can be easily extended to gradient flows with multi-components and/or multi global constraints. We consider in this section  a single component system with two global constraints.  
\subsection{The model}
Vesicle membranes are formed by lipid bi-layers which play an essential role in biological functions and its equilibrium shapes
often characterized by bending energy and two physical  constraints as described as below.

 As in \cite{du2008numerical,cheng2018multiple}, we consider the bending energy 
\begin{equation}\label{blend:en}
E_b(\phi)=\frac{\epsilon}{2}\int_{\Omega}\Big(-\Delta\phi+\frac{1}{\epsilon^2}G(\phi)\Big)^2d\bx =\frac{\epsilon}{2}\int_{\Omega}w^2d\bx,
\end{equation}
where
\begin{equation*}
w:=-\Delta\phi+\frac{1}{\epsilon^2}G(\phi),\quad G(\phi):=F'(\phi),\;F(\phi)=\frac 14(\phi^2-1)^2.
\end{equation*}
In the above, the level set $\{\phi(\bx,t)=0\}$ denotes the vesicle  membrane surface,  while $\{\phi(\bx,t)>0\}$ and $\{\phi(\bx,t)<0\}$ represent the inside and outside of the membrane surface respectively, and $\epsilon$ is width of transition layer.

During the evolution, the membranes also preserve total volume and surface area represented by

\begin{equation}\label{def:AH}
A(\phi)=\int_{\Omega}\phi d\bx \quad and \quad H(\phi)=\int_{\Omega} h(\phi) d\bx\;\text{ with }\; h(\phi)=\frac{\epsilon}{2}|\Grad \phi|^2+\frac{1}{\epsilon}F(\phi).
\end{equation}

We now introduce   two Lagrange multipliers, $\gamma(t)$ and $\lambda(t)$, to enforce the volume and surface area conservations. The corresponding gradient flow reads:
\begin{eqnarray}
&&\phi_t =-M\mu , \label{vesicle:1}\\
&&\mu=-\epsilon\Delta w + \frac{1}{\epsilon}G'(\phi)w+\gamma(t)+\lambda(t)\frac{\delta H}{\delta \phi}, \label{vesicle:2}\\
&& w=-\Delta \phi+\frac{1}{\epsilon^2}G(\phi), \label{vesicle:3}\\
&& \frac{d}{dt} A(\phi)=0,\label{vesicle:4}\\
&& \frac{d}{dt} H(\phi)=0.\label{vesicle:5}
\end{eqnarray}
where $M$ is the mobility constant. 
The boundary conditions can be either one of the following two types:
\begin{eqnarray}
&&(i)\mbox{ periodic; or } (ii)\,\,\partial_{\bf n} \ph|_{\partial\Omega}=\partial_{\bf n} w|_{\partial\Omega}=0, \label{ori:pdefhbd2}
\end{eqnarray}
where $\bf n$ is the unit outward normal on the boundary $\partial\Omega$.

\begin{lem}
The system \eqref{vesicle:1}-\eqref{vesicle:5} with \eqref{ori:pdefhbd2} admits the following energy dissipative law:
\begin{eqnarray}\label{LAW:1}
\frac{d}{dt} E_b(\phi)=-M(\mu,\mu).
\end{eqnarray}
\end{lem}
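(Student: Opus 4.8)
The plan is to differentiate the bending energy directly, substitute the evolution equations, and use the two constraints to kill the Lagrange multiplier contributions. Write $E_b(\phi)=\frac{\epsilon}{2}\int_\Omega w^2\,d\bx$ with $w=-\Delta\phi+\frac1{\epsilon^2}G(\phi)$ as in \eqref{blend:en}, \eqref{vesicle:3}. Then
\begin{equation}
\frac{d}{dt}E_b(\phi)=\epsilon(w,w_t),\qquad w_t=-\Delta\phi_t+\frac1{\epsilon^2}G'(\phi)\phi_t,
\end{equation}
so that $\frac{d}{dt}E_b(\phi)=\epsilon\big(w,-\Delta\phi_t+\frac1{\epsilon^2}G'(\phi)\phi_t\big)$. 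The first step is to integrate by parts the term $\epsilon(w,-\Delta\phi_t)=\epsilon(-\Delta w,\phi_t)$; under either boundary condition in \eqref{ori:pdefhbd2} (periodicity, or $\partial_{\bf n}\phi|_{\partial\Omega}=\partial_{\bf n}w|_{\partial\Omega}=0$, the latter giving $\partial_{\bf n}\phi_t|_{\partial\Omega}=0$ as well) all boundary terms vanish, and one obtains
\begin{equation}
\frac{d}{dt}E_b(\phi)=\Big(-\epsilon\Delta w+\frac1{\epsilon}G'(\phi)w,\ \phi_t\Big).
\end{equation}

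Next I would recognize the left factor as $\mu$ minus the multiplier terms: from \eqref{vesicle:2}, $-\epsilon\Delta w+\frac1{\epsilon}G'(\phi)w=\mu-\gamma(t)-\lambda(t)\frac{\delta H}{\delta\phi}$. Substituting and using $\phi_t=-M\mu$ from \eqref{vesicle:1} gives
\begin{equation}
\frac{d}{dt}E_b(\phi)=-M(\mu,\mu)-\gamma(t)\,(1,\phi_t)-\lambda(t)\Big(\frac{\delta H}{\delta\phi},\phi_t\Big).
\end{equation}
Then I would observe that $(1,\phi_t)=\frac{d}{dt}\int_\Omega\phi\,d\bx=\frac{d}{dt}A(\phi)=0$ by \eqref{vesicle:4}, and that $\big(\frac{\delta H}{\delta\phi},\phi_t\big)=\frac{d}{dt}H(\phi)=0$ by \eqref{vesicle:5} — the latter identity again requires an integration by parts (since $h(\phi)=\frac{\epsilon}{2}|\Grad\phi|^2+\frac1{\epsilon}F(\phi)$ has a gradient term, so $\frac{\delta H}{\delta\phi}=-\epsilon\Delta\phi+\frac1{\epsilon}F'(\phi)$), with boundary terms vanishing under \eqref{ori:pdefhbd2}. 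Combining these yields $\frac{d}{dt}E_b(\phi)=-M(\mu,\mu)$, which is \eqref{LAW:1}.

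The computation is essentially routine; the only points requiring care are the bookkeeping of boundary terms in the two integrations by parts (making sure the stated boundary conditions really do eliminate them, including the induced condition $\partial_{\bf n}\phi_t|_{\partial\Omega}=0$), and the correct identification of the variational derivative $\frac{\delta H}{\delta\phi}$ together with the fact that pairing it against $\phi_t$ reproduces $\frac{d}{dt}H(\phi)$. I do not anticipate any genuine obstacle beyond this careful handling of the integration-by-parts terms.
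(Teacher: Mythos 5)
Your proposal is correct and follows essentially the same route as the paper: test the evolution equations against $\mu$ and $\phi_t$ (equivalently, differentiate $E_b$ and substitute), integrate by parts under the stated boundary conditions, and use $\frac{d}{dt}A(\phi)=0$ and $\frac{d}{dt}H(\phi)=0$ to eliminate the $\gamma$ and $\lambda$ terms. The extra care you take with the induced condition $\partial_{\bf n}\phi_t|_{\partial\Omega}=0$ and the explicit form of $\frac{\delta H}{\delta\phi}$ is consistent with, and slightly more detailed than, the paper's one-line argument.
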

\begin{proof}
Taking the $L^2$ inner products of \eqref{vesicle:1} with $\mu$, and of \eqref{vesicle:2} with $\phi_t$ and of \eqref{vesicle:3} with $w$,   integrating by parts and summing up the results,  noticing that $(1,\phi_t)=\frac d{dt} A(\phi)=0$ and  $(\frac{\delta H}{\delta \phi},\phi_t)=\frac d{dt} H(\phi)=0$, we obtain the energy dissipative law.
\end{proof}

To simplify the presentation, we shall only construct a scheme using the third approach in the last section, since it is simpler than the second approach while maintaining unconditional energy stability.  Obviously, schemes based on other approaches can be constructed similarly.
We rewrite the blending energy as 
\begin{equation}
\begin{split}
E_b(\phi)&=\frac{\eps}{2}\int_{\Omega} |\Delta \phi|^2d\bx+\frac{\eps}{2}\int_{\Omega} \frac{6}{\eps^2}\phi^2|\Grad \phi|^2 + \frac{1}{\eps^4} (G(\phi))^2-\frac{2}{\eps^2}|\Grad \phi|^2d\bx
\\&=\frac{\eps}{2}\int_{\Omega} |\Delta \phi|^2d\bx+\int_{\Omega} Q(\phi)d\bx,
\end{split}
\end{equation}
where  $Q(\phi)=\frac{\eps}{2}\{ \frac{6}{\eps^2}\phi^2|\Grad \phi|^2 + \frac{1}{\eps^4} (G(\phi))^2-\frac{2}{\eps^2}|\Grad \phi|^2\} $.  The key in the second and third approaches is to introduce a Lagrange multiplier $\eta(t)$ to deal with nonlinear part of the energy $Q(\phi)$ and reformulate  \eqref{vesicle:1}-\eqref{vesicle:5}  as 
\begin{eqnarray}
&&\phi_t =-M\mu , \label{new:vesicle:1}\\
&&\mu=\epsilon\Delta^2 \phi + \eta(t) \frac{\delta Q}{\delta \phi}+\gamma(t)+\lambda(t)\frac{\delta H}{\delta \phi}, \label{new:vesicle:2}\\
&&\frac{d}{dt}\int_{\Omega} Q(\phi)d\bx=\eta(t)(\frac{\delta Q}{\delta \phi},\phi_t)+\lambda(t)(\frac{\delta H}{\delta \phi},\phi_t), \label{new:vesicle:3}\\
&& \frac{d}{dt}A(\phi)=0,\label{new:vesicle:4}\\
&& \frac{d}{dt}H(\phi)=0.\label{new:vesicle:5}
\end{eqnarray}
Note that the last term in \eqref{new:vesicle:3} is zero. We added this term which is essential in constructing efficient energy stable schemes.

\subsection{A second-order scheme based on the third approach} As an example, we construct below a second-order (BDF2) scheme for system \eqref{new:vesicle:1}-\eqref{new:vesicle:5} based on the third approach:  
\begin{eqnarray}
&&\frac{3\phi^{n+1}-4\phi^n+\phi^{n-1}}{2\delta t} =-M\mu^{n+1} , \label{vesicle:sch:1}\\
&&\mu^{n+1}=\epsilon\Delta^2 \phi^{n+1} + \eta^{n+1}({\frac{\delta Q}{\delta \phi}})^{*,n+1} +\gamma^{n+1}+\lambda^{n+1}({\frac{\delta H}{\delta \phi}})^{*,n+1}, \label{vesicle:sch:2}\\
&&\int_{\Omega} 3Q(\phi^{n+1})-4Q(\phi^n)+Q(\phi^{n-1}) d\bx \\&&\hskip 1cm=\eta^{n+1}(({\frac{\delta Q}{\delta \phi}})^{*,n+1},3\phi^{n+1}-4\phi^n+\phi^{n-1})\label{vesicle:sch:3}\\
&&\hskip 1cm+\lambda^{n+1}(({\frac{\delta H}{\delta \phi}})^{*,n+1},3\phi^{n+1}-4\phi^n+\phi^{n-1}), \nonumber\\
&& \int_{\Omega}\bar\phi^{n+1} d\bx=\int_{\Omega}\phi^{0} d\bx,\label{vesicle:sch:4}\\
&& H(\bar\phi^{n+1})=H(\phi^{0}),\label{vesicle:sch:5}
\end{eqnarray}
where $ g^{*,n+1}=2g^n-g^{n-1}$ for any sequence $\{g^n\}$, $\bar\phi^{n+1}$ is defined in \eqref{eq:hat}  below during the solution procedure.

Setting
\begin{equation}\label{phi4}
\phi^{n+1}=\phi_1^{n+1}+\eta^{n+1}\phi_2^{n+1}+\gamma^{n+1}\phi_3^{n+1}+\lambda^{n+1}\phi_4^{n+1},
\end{equation}
in \eqref{vesicle:sch:1}-\eqref{vesicle:sch:2} and eliminating $\mu^{n+1}$, we find that $\{\phi^{n+1}_i\}$ can be determined by
\begin{equation}
(\frac{1}{2\delta t}+M\eps\Delta^2)\phi^{n+1}_i=g_i, \quad i=1,2,3,4, 
\end{equation}
with $g_i$ to be known functions from previous steps. Once $\{\phi^{n+1}_i, \;i=1,2,3,4\}$ are known, we define
\begin{eqnarray}
&&\bar \phi^{n+1}=\phi_1^{n+1}+\phi_2^{n+1}+\gamma^{n+1}\phi_3^{n+1}+\lambda^{n+1}\phi_4^{n+1}.\label{eq:hat}
\end{eqnarray}
Note that $\bar \phi^{n+1}$ is still as good an approximation to $\phi|_{t^{n+1}}$  as $\phi^{n+1}$ since $\eta^{n+1}$ is a second-order approximation to 1. 
 
 We can then determine the three Lagrange multipliers  as follows:
 \begin{itemize}
\item Plug \eqref{eq:hat} into \eqref{vesicle:sch:4}, we obtain a linear relation between  $\gamma^{n+1}$ and $\lambda^{n+1}$;
\item Plug \eqref{eq:hat} into \eqref{vesicle:sch:5} and using the linear relation between  $\gamma^{n+1}$ and $\lambda^{n+1}$, we obtain  a  nonlinear algebraic equation for $\lambda^{n+1}$ which can be solved a Newton iteration using an initial guess obtained by  a linear scheme as in Section 2.1;
\item With  $\gamma^{n+1}$ and $\lambda^{n+1}$ known,  determine $\eta^{n+1}$ by  plugging \eqref{phi4}  into \eqref{vesicle:sch:3}  and solve the resulted   nonlinear algebraic equation with the initial guess $1$.
\end{itemize}
Hence, the above scheme can be  implemented very efficiently.
As for the stability, we have the following result:
\begin{thm}
 The  scheme \eqref{vesicle:sch:1}-\eqref{vesicle:sch:4} is unconditionally energy stable in the sense that
 \begin{equation*}
 E_b^{n+1}-E_b^{n}\le -\delta tM \|\mu^{n+1}\|^2,
\end{equation*}
where 
\begin{equation}
E_b^{n+1}=\frac{\eps}{4}(\|\Delta\phi^{n+1}\|^2+\|\Delta(2\phi^{n+1}-\phi^n)\|^2)+\frac 12\int_{\Omega} 3Q(\phi^{n+1})-Q(\phi^n)) d\bx,
\end{equation}
which is a second-order approximation to the original free energy $E_b(\phi)$ at $t^{n+1}$.
\end{thm}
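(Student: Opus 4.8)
The plan is to run the standard BDF2 ``G-stability'' energy argument, using the auxiliary equation \eqref{vesicle:sch:3} to absorb the Lagrange multipliers $\eta^{n+1}$ and $\lambda^{n+1}$, and the volume constraint \eqref{vesicle:sch:4} to dispose of the constant multiplier $\gamma^{n+1}$. First I would pair \eqref{vesicle:sch:1} with $2\delta t\,\mu^{n+1}$ in $L^2(\Omega)$, which gives
\[
(3\phi^{n+1}-4\phi^n+\phi^{n-1},\mu^{n+1})=-2\delta t\,M\|\mu^{n+1}\|^2 .
\]
Then I would insert the expression \eqref{vesicle:sch:2} for $\mu^{n+1}$ into the left-hand side, producing four pieces: the biharmonic piece $\epsilon(\Delta^2\phi^{n+1},3\phi^{n+1}-4\phi^n+\phi^{n-1})$; the $\eta^{n+1}$ piece $\eta^{n+1}(({\delta Q}/{\delta\phi})^{*,n+1},3\phi^{n+1}-4\phi^n+\phi^{n-1})$; the constant piece $\gamma^{n+1}(1,3\phi^{n+1}-4\phi^n+\phi^{n-1})$; and the $\lambda^{n+1}$ piece $\lambda^{n+1}(({\delta H}/{\delta\phi})^{*,n+1},3\phi^{n+1}-4\phi^n+\phi^{n-1})$. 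The essential point, which is precisely why the ``zero term'' was inserted in \eqref{new:vesicle:3}, is that the $\eta^{n+1}$ and $\lambda^{n+1}$ pieces together are exactly the right-hand side of \eqref{vesicle:sch:3}, hence equal $\int_\Omega\big(3Q(\phi^{n+1})-4Q(\phi^n)+Q(\phi^{n-1})\big)\,d\bx$.

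For the biharmonic piece I would integrate by parts and use the boundary conditions \eqref{ori:pdefhbd2} to rewrite it as $\epsilon(\Delta\phi^{n+1},3\Delta\phi^{n+1}-4\Delta\phi^n+\Delta\phi^{n-1})$, then apply the well-known BDF2 identity $2(3a-4b+c,a)=\|a\|^2-\|b\|^2+\|2a-b\|^2-\|2b-c\|^2+\|a-2b+c\|^2$ with $a=\Delta\phi^{n+1}$, $b=\Delta\phi^n$, $c=\Delta\phi^{n-1}$. This turns the biharmonic piece into twice the increment of $\frac{\epsilon}{4}\big(\|\Delta\phi^{n+1}\|^2+\|\Delta(2\phi^{n+1}-\phi^n)\|^2\big)$ plus the nonnegative remainder $\frac{\epsilon}{2}\|\Delta(\phi^{n+1}-2\phi^n+\phi^{n-1})\|^2$. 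Likewise, writing $3Q^{n+1}-4Q^n+Q^{n-1}=(3Q^{n+1}-Q^n)-(3Q^n-Q^{n-1})$ identifies $\int_\Omega(3Q(\phi^{n+1})-4Q(\phi^n)+Q(\phi^{n-1}))\,d\bx$ with twice the increment of $\frac12\int_\Omega(3Q(\phi^{n+1})-Q(\phi^n))\,d\bx$. Finally, the constant piece equals $\gamma^{n+1}\big(3A(\phi^{n+1})-4A(\phi^n)+A(\phi^{n-1})\big)$ with $A(\phi)=\int_\Omega\phi\,d\bx$, and I would use the volume constraint \eqref{vesicle:sch:4} (applied at the previous two steps as well) to conclude that this vanishes. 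Collecting the pieces and dividing by $2$ yields
\[
E_b^{n+1}-E_b^n+\frac{\epsilon}{4}\|\Delta(\phi^{n+1}-2\phi^n+\phi^{n-1})\|^2=-\delta t\,M\|\mu^{n+1}\|^2 ,
\]
and discarding the nonnegative remainder gives the stated inequality.

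In my view the proof needs no delicate estimate; the work is entirely bookkeeping around the Lagrange multipliers, and the one place calling for genuine care is the constant piece: the constraint \eqref{vesicle:sch:4} is imposed on $\bar\phi^{n+1}$ (defined in \eqref{eq:hat}), not on $\phi^{n+1}$ itself, and the two differ by $(1-\eta^{n+1})\phi_2^{n+1}$. Since $\eta^{n+1}$ is a consistent, second-order approximation of $1$, this discrepancy is asymptotically negligible, but to obtain the clean inequality one must either argue that the resulting extra term is dominated by the nonnegative remainders, or simply carry out the argument under the natural (and numerically enforced) assumption that the mean $A(\phi^k)$ is conserved exactly. The fact that one ends up with an inequality rather than the equality obtained for the Crank--Nicolson schemes of Section 2 is solely due to the nonnegative BDF2 defect $\frac{\epsilon}{4}\|\Delta(\phi^{n+1}-2\phi^n+\phi^{n-1})\|^2$, which is thrown away at the last step.
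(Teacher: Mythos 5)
Your proof is correct and follows essentially the same route as the paper's: pairing \eqref{vesicle:sch:1} with $2\delta t\,\mu^{n+1}$, absorbing the $\eta^{n+1}$ and $\lambda^{n+1}$ pieces via \eqref{vesicle:sch:3}, killing the constant piece via the volume constraint, applying the BDF2 identity \eqref{bdf2} to the biharmonic term, and discarding the nonnegative defect. The one caveat you raise --- that \eqref{vesicle:sch:4} constrains $\bar\phi^{n+1}$ rather than $\phi^{n+1}$, so the identity $(1,3\phi^{n+1}-4\phi^n+\phi^{n-1})=0$ is not literally immediate --- is a genuine subtlety that the paper's own proof silently glosses over, and your treatment of it is if anything more careful than the original.
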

\begin{proof}
Taking the inner product of equation \eqref{vesicle:sch:1} with $2\delta t\mu^{n+1}$, we derive
\begin{equation}
(3\phi^{n+1}-4\phi^n+\phi^{n-1},\mu^{n+1})=-2\delta tM\|\mu^{n+1}\|^2.
\end{equation}
Due to equation \eqref{vesicle:sch:4}, we have
\begin{equation}\label{ves:eq}
(1,3\phi^{n+1}-4\phi^n+\phi^{n-1})=0.
\end{equation}
Taking the inner product of equation \eqref{vesicle:sch:2} with $3\phi^{n+1}-4\phi^n+\phi^{n-1}$  and using equality  \eqref{ves:eq} and equation \eqref{vesicle:sch:3}, we derive
\begin{equation}
\begin{split}
&(3\phi^{n+1}-4\phi^n+\phi^{n-1},\mu^{n+1})=(\epsilon\Delta^2 \phi^{n+1} ,3\phi^{n+1}-4\phi^n+\phi^{n-1})\\&+\int_{\Omega} 3Q(\phi^{n+1})-4Q(\phi^n)+Q(\phi^{n-1}) d\bx.
\end{split}
\end{equation}
Using the identity
\begin{equation}\label{bdf2}
\begin{split}
 2(a^{n+1}, 3a^{n+1}-4a^n+a^{n-1}) =& \|a^{n+1}\|^2-\|a^{n}\|^2+\|a^{n+1}-2a^n+a^{n-1}\|^2\\
 &+\|2a^{n+1}-a^n\|^2-\|2a^n-a^{n-1}\|^2,
 \end{split}
\end{equation}
 we have
\begin{equation*}
\begin{split}
(\epsilon\Delta^2 \phi^{n+1} ,3\phi^{n+1}-4\phi^n+\phi^{n-1})&=\frac{\eps}{2}(\|\Delta\phi^{n+1}\|^2-\|\Delta\phi^n\|^2+\|\Delta(2\phi^{n+1}-\phi^n)\|^2\\
&-\|\Delta(2\phi^{n}-\phi^{n-1})\|^2+\|\Delta(\phi^{n+1}-2\phi^n+\phi^{n-1})\|^2).
\end{split}
\end{equation*}
Combining the above equalities and dropping some unnecessary terms, we arrive at the desired result.
\end{proof}

\section{A multi-component system with multiple constraints} 

We consider in this section a norm-preserving model for optimal partition written in the form of gradient flow. It is a  multi-component system with multiple constraints.
\subsection{The model}
The optimal partition problem can be described by a norm-preserving gradient dynamics \cite{CL1}. Given a positive integer $m$ and a small parameter $\epsilon$, the total free energy is given by \begin{equation}
E(\bphi)=\int_{\Omega}(\frac 12|\Grad \bphi|^2+\bm{F}(\bphi))d\bx,
\end{equation}
where   $\bphi\in [H_0^1(\Omega)]^m$ is a vector valued function satisfying the norm constraint
\begin{equation}\label{norm-1}
H_j(\phi):=\int_{\Omega}|\phi_j|^2d\bx=1,\quad j=1,2, \dots ,m,
\end{equation} 
  $\bm{F}$ represents interaction potential of each partition
\begin{equation}
\bm{F}(\bphi)=\frac{1}{\eps^2}\sum\limits_{i=1}^{m}\sum\limits_{j<i}\phi_i^2\phi_j^2.
\end{equation}

We shall enforce the normalization conditions \eqref{norm-1} by introducing $j$ Lagrange multipliers. The corresponding gradient flow reads
\begin{eqnarray}
&&\partial_t\phi_j=-\mu_j,\label{part:1}\\
&&\mu_j=-\Delta \phi_j-\lambda_j(t)\phi_j+\frac{\delta \bm{F}}{\delta \phi_j},\label{part:2}\\
&&\frac{d}{dt}\int_{\Omega}|\phi_j(x,t)|^2d\bx=0,\quad j=1,2,\dots ,m, \label{part:3}
\end{eqnarray}
with initial condition $\int_{\Omega}|\phi_j(x,0)|^2d\bx=1$,
 boundary conditions 
\begin{eqnarray}
&&(i)\mbox{ periodic; or } (ii)\,\,\bphi|_{\partial\Omega}=0. \label{ori:part:bd}
\end{eqnarray}
\begin{lem}
The system \eqref{part:1}-\eqref{part:3} with \eqref{ori:part:bd} admits the following energy dissipative law:
\begin{eqnarray}\label{LAW:0}
\frac{d}{dt} E(\bphi)=-M\int_\Omega \sum\limits_{j=1}^{m}\mu_j^2 d\bx.
\end{eqnarray}
\end{lem}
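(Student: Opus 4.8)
The plan is to follow the same template as the proof of Lemma~3.1 for the vesicle system: test each equation of \eqref{part:1}--\eqref{part:3} against its natural multiplier, sum over the $m$ components, integrate by parts, and then use the constraints \eqref{part:3} to annihilate the Lagrange-multiplier contributions. Concretely, for each $j\in\{1,\dots,m\}$ I would first take the $L^2$ inner product of \eqref{part:1} with $\mu_j$, which gives $(\partial_t\phi_j,\mu_j)=-\|\mu_j\|^2$ (with the mobility constant $M$ inserted to match the stated law, i.e.\ $(\partial_t\phi_j,\mu_j)=-M\|\mu_j\|^2$). Summing over $j$ yields $\sum_{j=1}^m(\partial_t\phi_j,\mu_j)=-M\int_\Omega\sum_{j=1}^m\mu_j^2\,d\bx$.

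Next I would take the $L^2$ inner product of \eqref{part:2} with $\partial_t\phi_j$, obtaining $(\mu_j,\partial_t\phi_j)=-(\Delta\phi_j,\partial_t\phi_j)-\lambda_j(t)(\phi_j,\partial_t\phi_j)+\bigl(\tfrac{\delta \bm{F}}{\delta \phi_j},\partial_t\phi_j\bigr)$. Integration by parts—legitimate because under either boundary condition in \eqref{ori:part:bd} the resulting boundary integral vanishes—turns the first term into $(\Grad{\phi_j},\partial_t\Grad{\phi_j})=\tfrac12\tfrac{d}{dt}\|\Grad{\phi_j}\|^2$. The crucial point is that $(\phi_j,\partial_t\phi_j)=\tfrac12\tfrac{d}{dt}\int_\Omega|\phi_j|^2\,d\bx=\tfrac12\tfrac{d}{dt}H_j(\phi)=0$ by the constraint \eqref{part:3}, so the $\lambda_j(t)$ term drops out entirely for every $j$.

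Summing the resulting identities over $j$ and using the chain rule $\sum_{j=1}^m\bigl(\tfrac{\delta \bm{F}}{\delta \phi_j},\partial_t\phi_j\bigr)=\tfrac{d}{dt}\int_\Omega \bm{F}(\bphi)\,d\bx$, together with $\sum_{j=1}^m\tfrac12\tfrac{d}{dt}\|\Grad{\phi_j}\|^2=\tfrac{d}{dt}\int_\Omega\tfrac12|\Grad\bphi|^2\,d\bx$, one gets $\sum_{j=1}^m(\mu_j,\partial_t\phi_j)=\tfrac{d}{dt}E(\bphi)$. Equating this with the expression from the first step gives $\tfrac{d}{dt}E(\bphi)=-M\int_\Omega\sum_{j=1}^m\mu_j^2\,d\bx$, which is \eqref{LAW:0}.

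As for the main obstacle: there is essentially no analytic difficulty here—the entire content is the bookkeeping that the Lagrange-multiplier terms cancel, which is precisely where the norm constraints \eqref{part:3} enter, plus the routine check that the boundary contributions from integration by parts vanish for both admissible boundary conditions. As in the preceding lemmas, the computation is formal and should be read as valid for sufficiently smooth solutions.
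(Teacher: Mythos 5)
Your proposal is correct and follows essentially the same route as the paper's proof: test \eqref{part:1} with $\mu_j$ and \eqref{part:2} with $\partial_t\phi_j$, integrate by parts, invoke the constraint \eqref{part:3} to cancel the $\lambda_j(t)$ terms, and sum over $j$. Your aside about inserting the mobility constant $M$ correctly flags a small notational inconsistency in the paper (the system \eqref{part:1} is written without $M$ while the stated law \eqref{LAW:0} carries one), but this does not affect the substance of the argument.
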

\begin{proof}
Taking the  inner products of \eqref{part:1} with $\mu_j$, and of \eqref{part:2} with $\partial_t\phi_j,j=1,2,\dots ,m$,  noticing the equality \eqref{part:3}, integrating by parts and summing up all the relations, we obtain the desired result.
\end{proof}
Again the key for the second  and third approaches is to introduce  a Lagrange multiplier to deal with the nonlinear term, and rewrite
the system \eqref{part:1}-\eqref{part:3}  as 
\begin{eqnarray}
&&\partial_t\phi_j=-\mu_j,\label{new:part:1}\\
&&\mu_j=-\Delta \phi_j-\lambda_j(t)\phi_j+\eta(t)\frac{\delta \bm{F}}{\delta \phi_j},\label{new:part:2}\\
&&\frac{d}{dt}\int_{\Omega}\bm{F}(\bphi)d\bx=\sum\limits_{j=1}^{m}\eta(t)(\frac{\delta \bm{F}}{\delta \phi_j},\partial_t\phi_j)+\sum\limits_{j=1}^{m}\lambda_j(t)(\phi_j,\partial_t\phi_j),\label{new:part:3}\\
&&\frac{d}{dt}\int_{\Omega}|\phi_j(x,t)|^2d\bx=0,\quad j=1,\dots m.\label{new:part:4}
\end{eqnarray}
Note that we added the last term in \eqref{new:part:3} which  is zero  but is essential in constructing energy stable schemes below.

\subsection{A second-order  scheme based on the third approach}

As an example, we construct below a second-order (BDF2) scheme for the system  \eqref{part:1}-\eqref{part:3} based on the third approach: 

We can construct a second-order scheme based on system \eqref{new:part:1}-\eqref{new:part:3}.

For $j=1,2,\cdots,m$:
\begin{eqnarray}
&&\frac{3\phi_j^{n+1}-4\phi_j^n+\phi_j^{n-1}}{2\delta t} =-\mu_j^{n+1},\label{np-e-bdf2:1}\\
&&\mu^{n+1}_j=-\Delta \phi_j^{n+1}-\lambda_j^{n+1}\phi_j^{\star,n+1}+\eta^{n+1}f(\phi_j^{\star,n+1}),\label{np-e-bdf2:2}\\
&&\int_{\Omega}3\bm{F}(\bphi^{n+1})-4\bm{F}(\bphi^{n})+\bm{F}(\bphi^{n-1})d\bx\\&&\hskip 1cm=\sum\limits_{j=1}^{m}\{(\eta^{n+1}(\frac{\delta \bm{F}}{\delta \phi_j})^{\star,n+1},3\phi^{n+1}_j-4\phi^n_j+\phi^{n-1}_j)
\label{np-e-bdf2:3}\\
&&\hskip 1cm +(\lambda_j^{n+1}\phi_j^{\star,n+1},3\phi_j^{n+1}-4\phi_j^n+\phi_j^{n-1})\},\nonumber\\
&&\int_{\Omega}|\bar \phi_j^{n+1}|^2d\bx=\int_{\Omega}|\phi_j^{0}|^2d\bx,\label{np-e-bdf2:4}
\end{eqnarray}
where $g^{\star,n+1}=2g^n-g^{n-1}$ for any sequence $\{g^n\}$, and $\bar \phi_j^{n+1}$ is defined in \eqref{eq:barj} below during the solution procedure.

Setting
 \begin{equation}\label{sol:pj}
 \phi_j^{n+1}=\psi^{n+1}_{0,j}+\lambda_j^{n+1}\psi^{n+1}_{1,j}+\eta^{n+1}\psi_{2,j}^{n+1},\quad j=1,2,\cdots ,m,
 \end{equation}
 and plugging the above into \eqref{np-e-bdf2:1}-\eqref{np-e-bdf2:2}, we can
 determine $\psi^{n+1}_{0,j}$, $\psi^{n+1}_{1,j}$ and $\psi^{n+1}_{2,j}$ by solving decoupled linear equations
  \begin{equation}
 (\frac{3}{2\delta t}-\Delta )\psi^{n+1}_{k,j}=g_{k,j},\quad k=0,1,2,\; j=1,2,\cdots ,m,
 \end{equation}
 where $\{g_{k,j}\}$ are known functions from the previous steps.
 Then we define
 \begin{equation}\label{eq:barj}
 \bar \phi_j^{n+1}=\psi^{n+1}_{0,j}+\lambda_j^{n+1}\psi^{n+1}_{1,j}+\psi_{2,j}^{n+1},\quad j=1,2,\cdots ,m.
\end{equation}
Note that $\bar \phi_j^{n+1}$ is still as good an approximation to $\phi_j|_{t^{n+1}}$  as $\phi_j^{n+1}$ since $\eta^{n+1}$ is a second-order approximation to 1.

Finally, we determine $\{\lambda_j^{n+1}\}$ and $\eta^{n+1}$ as follows:
\begin{itemize}
\item  Plug \eqref{eq:barj} into \eqref{np-e-bdf2:4}, we obtain, for each $j$, a quadratic algebraic equation for $\lambda^{n+1}_j$ which can be directly solved;
\item With $\{\lambda_j^{n+1}\}$ known, we plug \eqref{sol:pj} into \eqref{np-e-bdf2:3} to obtain a nonlinear algebraic equation for $\eta^{n+1}$, and we solve the nonlinear algebraic equation by a   Newton iteration with  1 as initial condition.
\end{itemize}
 Hence, the above scheme can be efficiently implemented.
 As for the stability, we have the following result:
\begin{thm}
 The  scheme \eqref{np-e-bdf2:1}-\eqref{np-e-bdf2:4} is unconditionally energy stable in the sense that
 \begin{equation*}
 E^{n+1}-E^{n}\le -\delta t\sum\limits_{j=1}^{m} \|\mu_j^{n+1}\|^2,
\end{equation*}
where 
\begin{equation}\label{vesicle:engy}
E^{n+1}=\frac{1}{4}(\|\Grad\bphi^{n+1}\|^2+\|\Grad(2\bphi^{n+1}-\bphi^n)\|^2)+\frac 12\int_{\Omega} 3\bm{F}(\bphi^{n+1})-\bm{F}(\bphi^{n})d\bx,
\end{equation}
which is a second-order approximation to the original free energy $E(\bphi)$ at $t^{n+1}$.
\end{thm}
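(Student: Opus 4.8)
The plan is to mirror the proof of the BDF2 stability result for the vesicle membrane scheme \eqref{vesicle:sch:1}--\eqref{vesicle:sch:4}: test the two discrete equations against the BDF2 increments $3\phi_j^{n+1}-4\phi_j^n+\phi_j^{n-1}$, absorb the Lagrange-multiplier contributions by means of the constraint equations, and turn the surviving quantities into telescoping differences through the algebraic identity \eqref{bdf2}.

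First I would take the $L^2$ inner product of \eqref{np-e-bdf2:1} with $2\delta t\,\mu_j^{n+1}$ and sum over $j=1,\dots,m$, which gives
\begin{equation*}
\sum_{j=1}^m\big(3\phi_j^{n+1}-4\phi_j^n+\phi_j^{n-1},\,\mu_j^{n+1}\big)=-2\delta t\sum_{j=1}^m\|\mu_j^{n+1}\|^2,
\end{equation*}
producing the dissipation term on the right of the claimed inequality. Next I would take the inner product of \eqref{np-e-bdf2:2} with $3\phi_j^{n+1}-4\phi_j^n+\phi_j^{n-1}$, integrate by parts in the diffusion term (no boundary contributions appear for either boundary condition in \eqref{ori:part:bd}, since each $\phi_j^k$ lies in $H_0^1(\Omega)$ or is periodic) so that $(-\Delta\phi_j^{n+1},\cdot)=(\Grad\phi_j^{n+1},\Grad\,\cdot)$, and sum over $j$. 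The $\eta^{n+1}$-terms and the $\lambda_j^{n+1}$-terms are, by construction, precisely the quantities on the right-hand side of \eqref{np-e-bdf2:3}, so using \eqref{np-e-bdf2:3} they collapse into $\int_\Omega\big(3\bm{F}(\bphi^{n+1})-4\bm{F}(\bphi^{n})+\bm{F}(\bphi^{n-1})\big)d\bx$; this is the discrete surrogate of the identity $(\phi_j,\partial_t\phi_j)=\tfrac12\tfrac{d}{dt}\|\phi_j\|^2=0$ behind the continuous energy law \eqref{LAW:0}. Combining with the first identity gives
\begin{equation*}
-2\delta t\sum_{j=1}^m\|\mu_j^{n+1}\|^2=\sum_{j=1}^m\big(\Grad\phi_j^{n+1},\Grad(3\phi_j^{n+1}-4\phi_j^n+\phi_j^{n-1})\big)+\int_\Omega 3\bm{F}(\bphi^{n+1})-4\bm{F}(\bphi^{n})+\bm{F}(\bphi^{n-1})\,d\bx.
\end{equation*}

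Then I would invoke \eqref{bdf2} with $a^k=\Grad\phi_j^k$ on each summand of the gradient term, which produces $\tfrac12\big(\|\Grad\phi_j^{n+1}\|^2-\|\Grad\phi_j^n\|^2+\|\Grad(2\phi_j^{n+1}-\phi_j^n)\|^2-\|\Grad(2\phi_j^n-\phi_j^{n-1})\|^2\big)$ together with the nonnegative remainder $\tfrac12\|\Grad(\phi_j^{n+1}-2\phi_j^n+\phi_j^{n-1})\|^2$, and for the potential I would use the splitting $3\bm{F}(\bphi^{n+1})-4\bm{F}(\bphi^{n})+\bm{F}(\bphi^{n-1})=\big(3\bm{F}(\bphi^{n+1})-\bm{F}(\bphi^{n})\big)-\big(3\bm{F}(\bphi^{n})-\bm{F}(\bphi^{n-1})\big)$, so that, after summing over $j$, the right-hand side telescopes into exactly $2(E^{n+1}-E^n)$ with $E^k$ as in \eqref{vesicle:engy}. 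Dividing by $2$ and discarding the nonnegative BDF2 remainders yields $E^{n+1}-E^n\le-\delta t\sum_{j=1}^m\|\mu_j^{n+1}\|^2$; the remaining claim that $E^{n+1}$ approximates $E(\bphi)$ at $t^{n+1}$ to the order of the scheme follows by Taylor-expanding $2\phi_j^{n+1}-\phi_j^n$ and $3\bm{F}(\bphi^{n+1})-\bm{F}(\bphi^{n})$.

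I expect the only genuinely delicate point — as opposed to routine BDF2 bookkeeping — to be the handling of the Lagrange-multiplier terms in the second step. In the continuous model they vanish because each constraint $\|\phi_j\|^2\equiv1$ forces $(\phi_j,\partial_t\phi_j)=0$, whereas in the discrete scheme the norm constraint \eqref{np-e-bdf2:4} is imposed on the auxiliary iterate $\bar\phi_j^{n+1}$ of \eqref{eq:barj} (which differs from $\phi_j^{n+1}$ by $(\eta^{n+1}-1)\psi_{2,j}^{n+1}$) and the multiplier terms are treated explicitly via $\phi_j^{\star,n+1}=2\phi_j^n-\phi_j^{n-1}$. One has to check that the way $\lambda_j^{n+1}$ enters \eqref{np-e-bdf2:2} and \eqref{np-e-bdf2:3} is arranged so that no multiplier contribution survives in the energy identity — exactly as the $\lambda$-term was set up to cancel in the vesicle BDF2 proof while the linear volume constraint there produced \eqref{ves:eq}. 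Once this is confirmed, the remainder of the argument is identical to the vesicle case, and the $m$-component sum costs nothing extra.
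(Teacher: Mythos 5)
Your proposal follows the paper's proof essentially verbatim: test \eqref{np-e-bdf2:1} with $2\delta t\,\mu_j^{n+1}$, test \eqref{np-e-bdf2:2} with the BDF2 increment $3\phi_j^{n+1}-4\phi_j^n+\phi_j^{n-1}$, absorb the $\eta^{n+1}$- and $\lambda_j^{n+1}$-contributions via \eqref{np-e-bdf2:3}, and telescope with the identity \eqref{bdf2} applied to $\Grad\phi_j^k$. Your closing caveat about how the $\lambda_j^{n+1}$-terms enter \eqref{np-e-bdf2:2} versus \eqref{np-e-bdf2:3} is exactly the point the paper uses implicitly (the added term vanishes at the continuous level, so its sign is dictated only by the requirement that the discrete multiplier contributions cancel), and once that is granted your argument is complete and identical to the paper's.
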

\begin{proof}
 Taking the inner product of \eqref{np-e-bdf2:1} with $2\delta t \mu_j^{n+1}$, we derive
 \begin{equation}
 (3\phi_j^{n+1}-4\phi_j^n+\phi_j^{n-1},\mu_j^{n+1})=-2\delta t\|\mu_j^{n+1}\|^2.
 \end{equation}
 Taking the  inner product of \eqref{np-e-bdf2:2} with $3\phi_j^{n+1}-4\phi_j^n+\phi_j^{n-1}$,  and  summing up  all these  equations from $j=1,2,\cdots ,m$, we obtain
 \begin{equation}
 \begin{split}
 \sum\limits_{j=1}^{m}(3\phi_j^{n+1}-4\phi_j^n+\phi_j^{n-1},\mu_j^{n+1})&=\sum\limits_{j=1}^{m}(\nabla \phi_j^{n+1},\nabla(3\phi_j^{n+1}-4\phi_j^n+\phi_j^{n-1}))\\&+\int_{\Omega}3\bm{F}(\bphi^{n+1})-4\bm{F}(\bphi^{n})+\bm{F}(\bphi^{n-1})d\bx.
 \end{split}
 \end{equation}
 Combing all relations obtained  above and  using the identity \eqref{bdf2},  we obtain the desired result.
\end{proof}

\section{Numerical results}
We present in this section some numerical experiments to compare the performance of different approaches and to validate their stability and convergence rates.  In all numerical examples below, we assume  periodic boundary conditions and use a Fourier Spectral  method in space. The computational domain is $[-\pi,\pi)^d$ with $d=2,3$.

\subsection{Validation and comparison}
We consider the   phase field vesicle membrane model \eqref{vesicle:1}-\eqref{vesicle:5} with $ \epsilon=\frac{6\pi}{128}, \; M=1$, and use $128$ modes in each direction in our Fourier Spectral method so that the spatial discretization errors are negligible compared with time discretization error.

\subsubsection{\bf Comparison of the three  approaches}
We first investigate the performance of the three  approaches proposed in Section 2.  We consider the 2D  phase field vesicle membrane model \eqref{vesicle:1}-\eqref{vesicle:5}, and  choose  as initial condition two close-by circles given by
\begin{equation}\label{ini_two_2D}
\phi(x,y,0)=\sum\limits_{i=1}^2\tanh(\frac{r_i-\sqrt{(x-x_i)^2+(y-y_i)^2}}{\sqrt{2}\eps}) + 1. 
\end{equation}
We define $(r_1,r_2)=(0.28\pi,0.28\pi)$, $(x_1,x_2)=(0,0)$ and $(y_1,y_2)=(0.35\pi,-0.35\pi)$.
In the left of  Fig.\,\ref{constraints:compare}, we plot the evolution of  Lagrange multiplier $\lambda$ with respect to time by using BDF2 scheme of  three numerical approaches.  
We observe that the three approaches lead to indistinguishable $\lambda$. However, we have  to use a very small time step, $\delta t=10^{-5}$, in the second approach for the Newton iteration to converge, while larger time steps can be used for the first and third approaches. 
On the other hand, we plot in the right of Fig.\,\ref{constraints:compare} the evolution of the  surface area by using the three approaches. We  observe that the first and second approaches   preserve exactly the surface area,  while very small differences on $B({\phi})$ are observed   by the third approach at several initial time steps, since the third approach only preserves  $B(\bar{\phi})$ instead of $B({\phi})$.

 The above results indicate that the first and third approaches  are preferable over the second approach, since they allow  larger time steps.  Therefore, we shall only use  the first and  third approaches  in the remaining simulations.

\begin{figure}[htbp]
\centering
\includegraphics[width=0.45\textwidth,clip==]{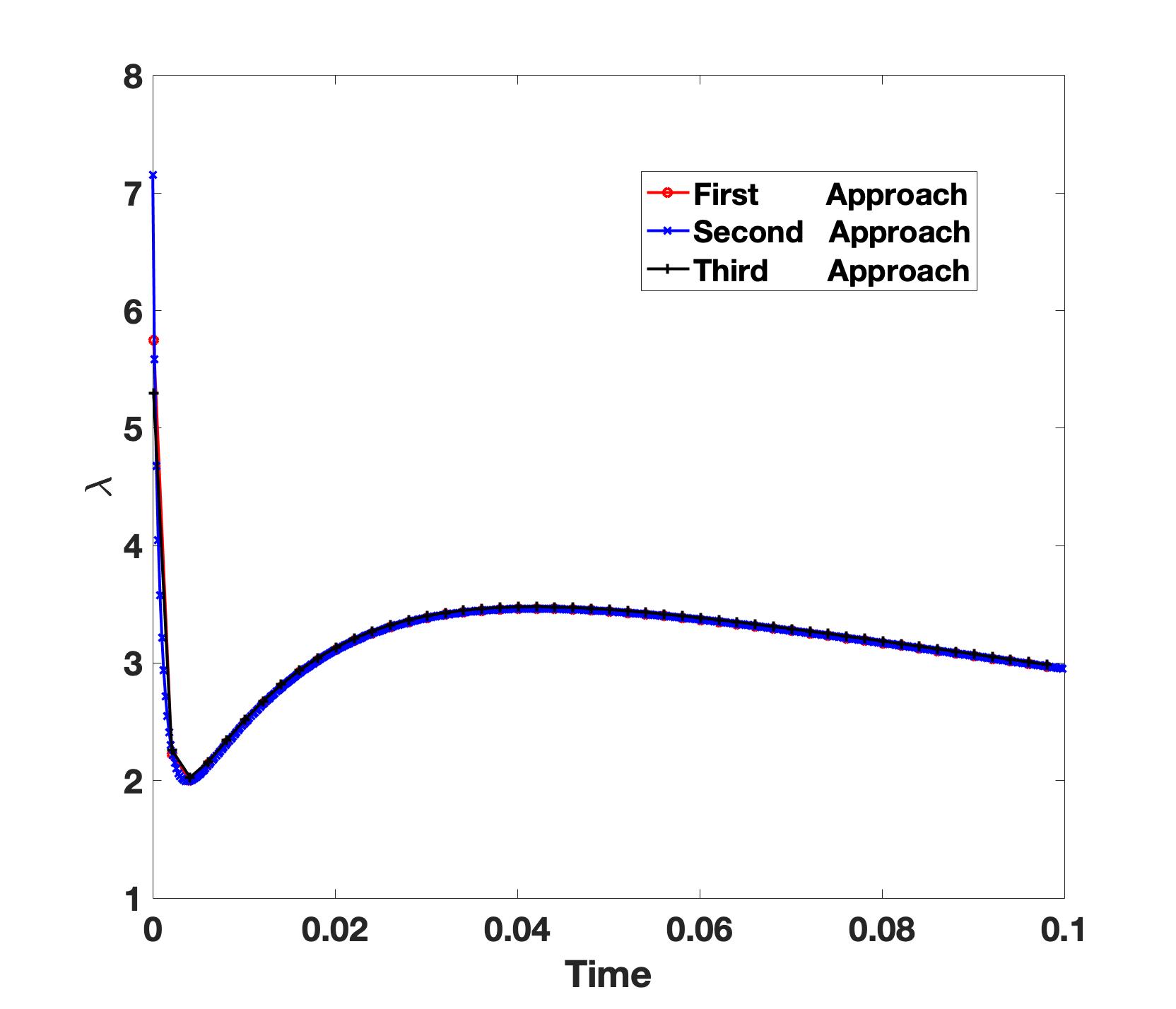}
\includegraphics[width=0.45\textwidth,clip==]{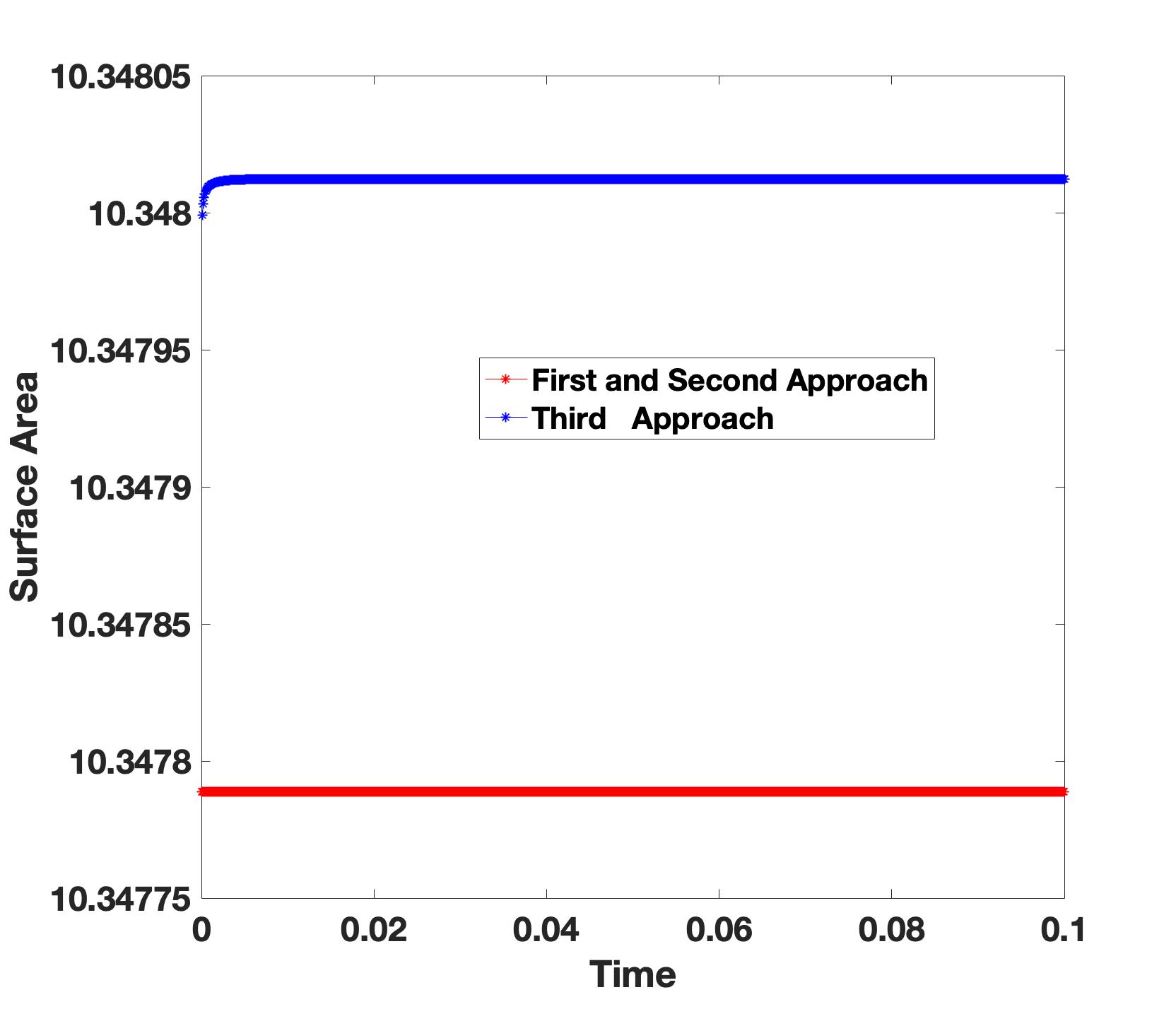}
\caption{Left: Evolution of $\lambda$ by using  three  approaches with $\delta t=10^{-4}$ for the first and third approaches while $\delta t=10^{-5}$ for the second approach; Right: Evolution of the  surface area.}\label{constraints:compare}
\end{figure}

\subsubsection{\bf Convergence rate}
We  test the  convergence rate of BDF2 schemes using first and third approaches for 2D phase field vesicle membrane model \eqref{vesicle:1}-\eqref{vesicle:5} with the initial condition
\begin{equation}
\phi(x,y,0)=(\frac{\sin(2x)\cos(2y)}{4}+0.48)(1-\frac{\sin^2(t)}{2}). 
\end{equation} 
 The  reference solutions are obtained with a small time step $\delta t=10^{-5}$ using the BDF2 schemes.  In Fig,\,\ref{Order_test},  we plot the $L^{\infty}$ errors  of $\phi$ between numerical solution and reference solution  with different time steps. We observe that second-order convergence rates are achieved by both approaches.

\begin{figure}[htbp]
\centering
\subfigure{
\includegraphics[width=0.45\textwidth,clip==]{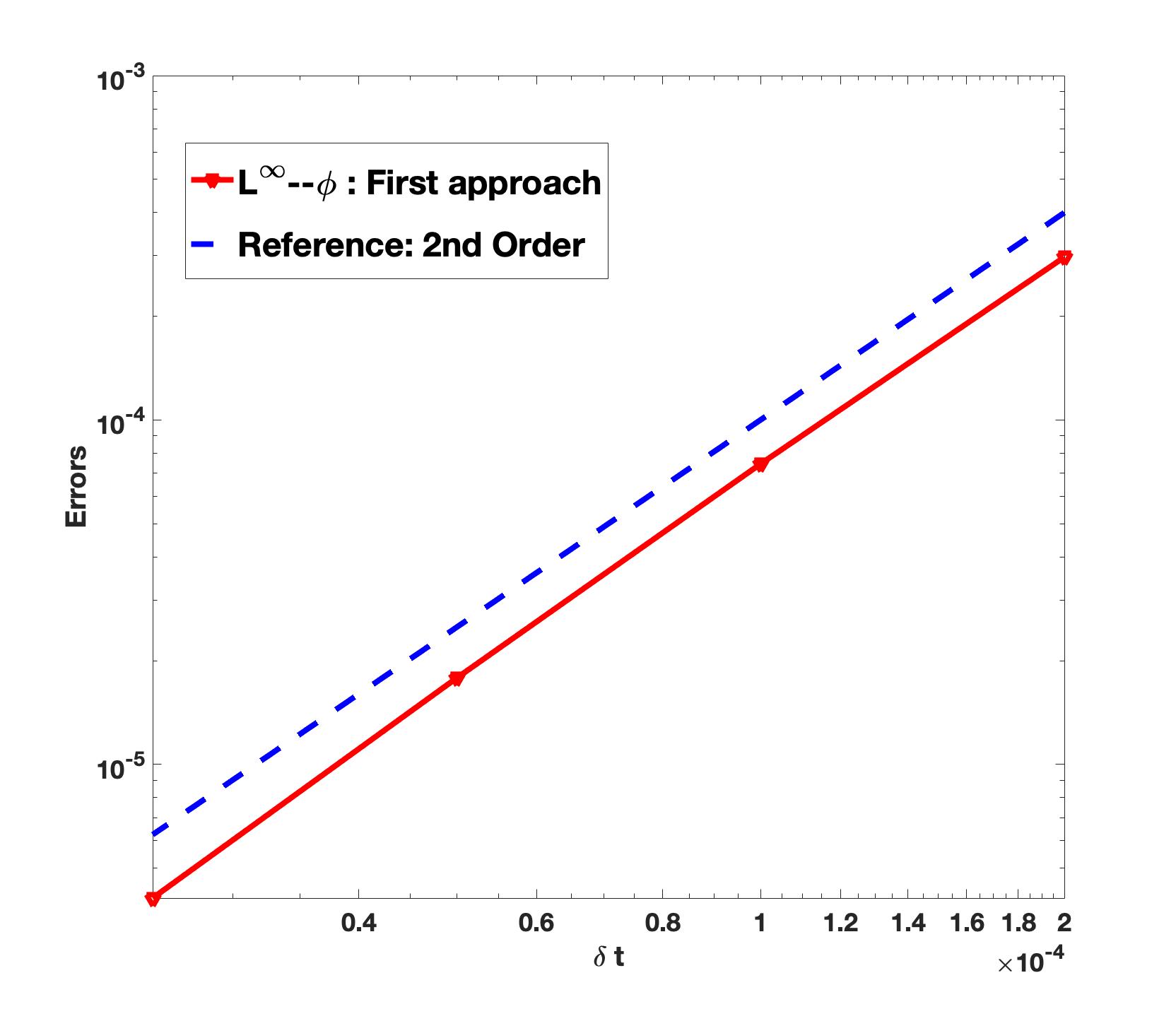}}
\subfigure{
\includegraphics[width=0.45\textwidth,clip==]{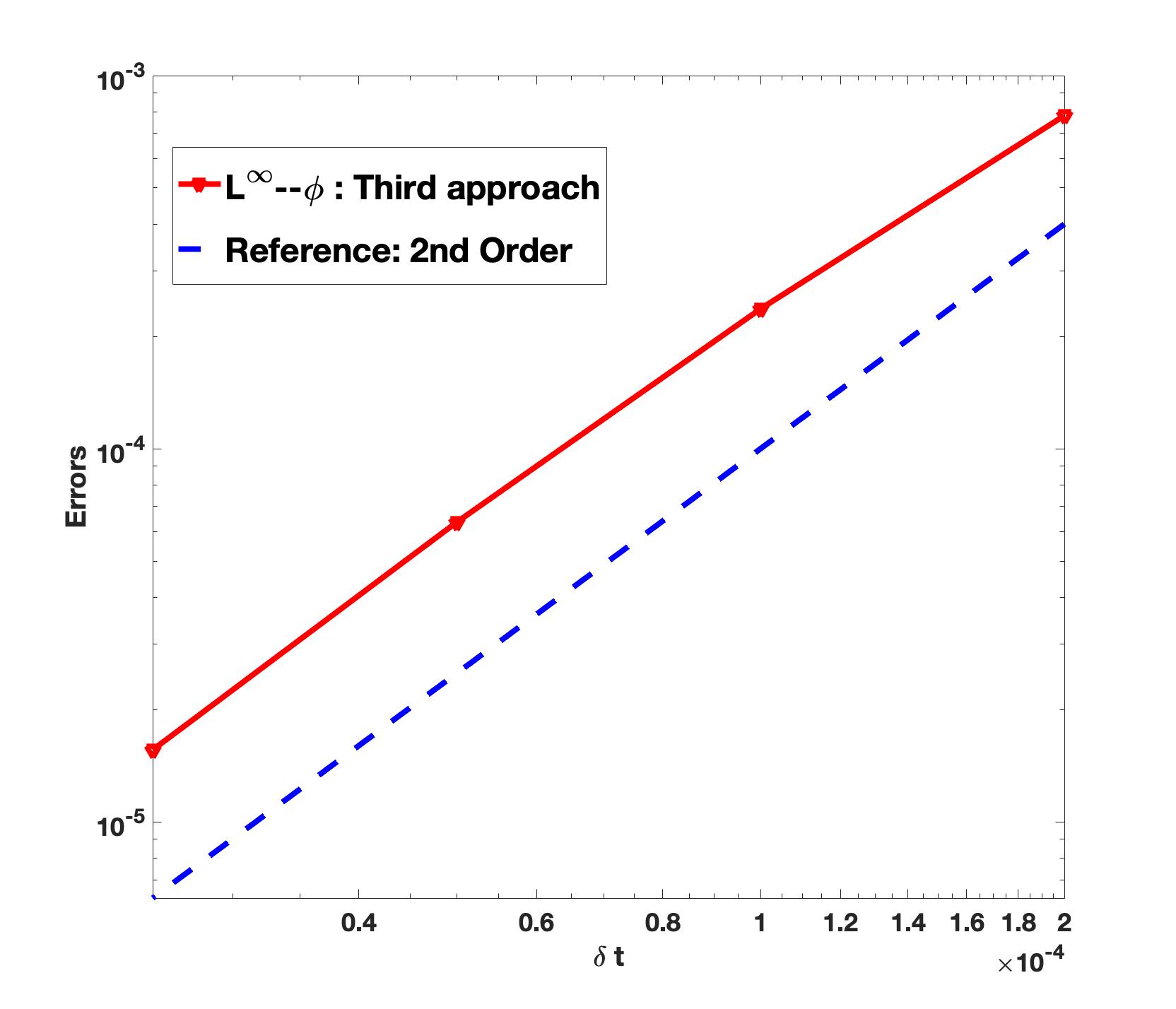}}
\caption{Convergence rate of BDF2 schemes by using the first and third approaches for 2D phase field vesicle membrane model \eqref{vesicle:1}-\eqref{vesicle:5}.}\label{Order_test}
\end{figure}

\subsubsection{\bf Comparison between the new approaches and the penalty approach in \cite{cheng2018multiple}}
We now compare our  Lagrange multiplier approach with the penalty approach developed  in \cite{cheng2018multiple}. 
In the penalty approach, we introduce two penalty parameters $\gamma$ and $\eta$, and consider the total free energy
\begin{equation}
 E_{total}(\phi)=E_b(\phi)+\frac1{2\gamma}(A(\phi)-A(\phi|_{t=0}))^2+\frac1{2\eta}(H(\phi)-H(\phi|_{t=0}))^2,
\end{equation}
where $E_b(\phi)$, $A(\phi)$ and $H(\phi)$ are defined in \eqref{blend:en} and \eqref{def:AH}. 
We observe that the penalty approach can only approximately preserve the constraints on $A(\phi)$ and $H(\phi)$,  and very small penalty parameters  have to be used if we want to preserve the constraints to a high accuracy. However,  small penalty parameters will lead to stiff  systems such that the MSAV approach proposed in \cite{cheng2018multiple}  requires very small time steps to get accurate solutions. More precisely,  we list the  maximum allowable time step in Table \ref{table2} for the MSAV scheme for 2D phase field vesicle membrane model by using penalty approach.  We observe that the  maximum allowable time step behaves like $\min(\sqrt{\gamma},\sqrt{\eta})$. On the other hand, the new Lagrangian multiplier approach is more efficient than the MSAV approach at each time step, and allows much large time steps.

\begin{table}[ht!]
\centering
\begin{tabular}{|r|c|c|c}
\hline
$\delta t$ allowed     & {\bf $\gamma$}  & {\bf  $\eta$}   \\\hline
$2\times 10^{-4}$    &$10^{-5}$  & $10^{-5}$   \\\hline
$2\times 10^{-4}$      &$10^{-6}$  & $10^{-6}$     \\\hline
$1\times 10^{-4}$      &$10^{-7}$  & $10^{-7}$  \\\hline
$5\times 10^{-5}$   &$10^{-8}$  & $10^{-8}$  \\ \hline
$2\times 10^{-5}$   &$10^{-9}$  & $10^{-9}$  \\\hline
$1\times 10^{-5}$   &$10^{-10}$  & $10^{-10}$\\\hline
$2\times 10^{-6}$  &$10^{-11}$  & $10^{-11}$  \\\hline
$1\times 10^{-6}$  &$10^{-12}$  & $10^{-12}$ \\
\hline
\end{tabular}
\vskip 0.5cm
\caption{Largest time step allowed for MSAV scheme with various Penalty parameters $\gamma$ and $\eta$}\label{table2}
\end{table}

Next, we simulate the 3D  phase field vesicle membrane model with the first approach proposed in this paper and the MSAV approach in \cite{cheng2018multiple}. We take  the initial condition as
\begin{equation}\label{ini_four}
\phi(x,y,z,0)=\sum\limits_{i=1}^4\tanh(\frac{r_i-\sqrt{(x-x_i)^2+(y-y_i)^2+(z-z_i)^2}}{\sqrt{2}\eps}) + 3, 
\end{equation}
where $r_i=\frac{\pi}{6}$, $x_i=0$, $(y_1,y_2,y_3,y_4)=(\frac{\pi}{4},-\frac{\pi}{4},\frac{3\pi}{4},-\frac{3\pi}{4})$ and $z_i=0$ for $i=1,2,3,4$.

In Fig.\,\ref{LGM-ball4-3D},  we plot evolution of  the volume difference and surface area difference  by the MSAV scheme in \cite{cheng2018multiple}  with  penalty parameter $\gamma=\eta=10^{-3}$ and by the  BDF2 scheme of  first approach using $\delta t=2\times 10^{-4}$.  We observe  that both  the volume and surface area are  preserved exactly by  the BDF2 scheme of  first approach  while only approximately for the MSAV scheme using the penalty approach.

In Fig.\,\ref{collision3-3D},  we   present snapshots of  isosurface of $\{\phi=0\}$ at different times by using the BDF2 scheme of first approach.  It is observed that the final steady state  is the same  as  that reported   in \cite{cheng2018multiple} using the penalty approach.  We also plot  in Fig.\,\ref{energy}  energy curves of different approaches which  are indistinguishable in all cases. 

\begin{figure}[htbp]
\centering
\subfigure{
\includegraphics[width=0.40\textwidth,clip==]{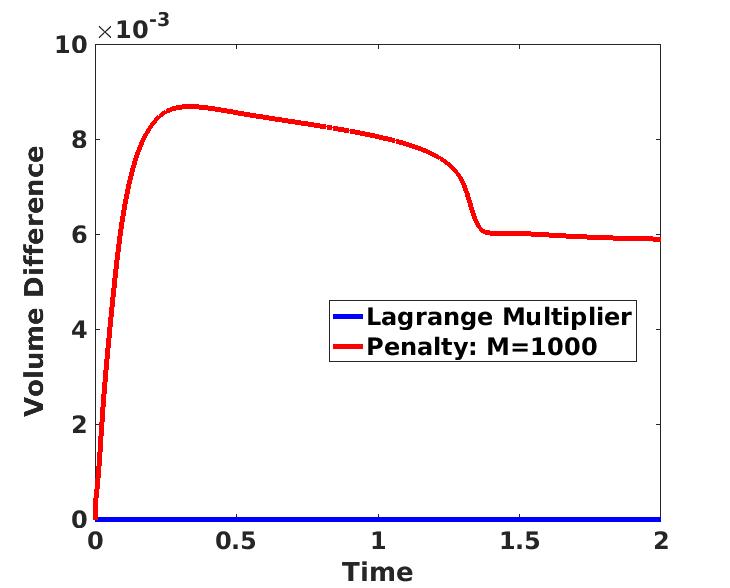}}
\subfigure{
\includegraphics[width=0.40\textwidth,clip==]{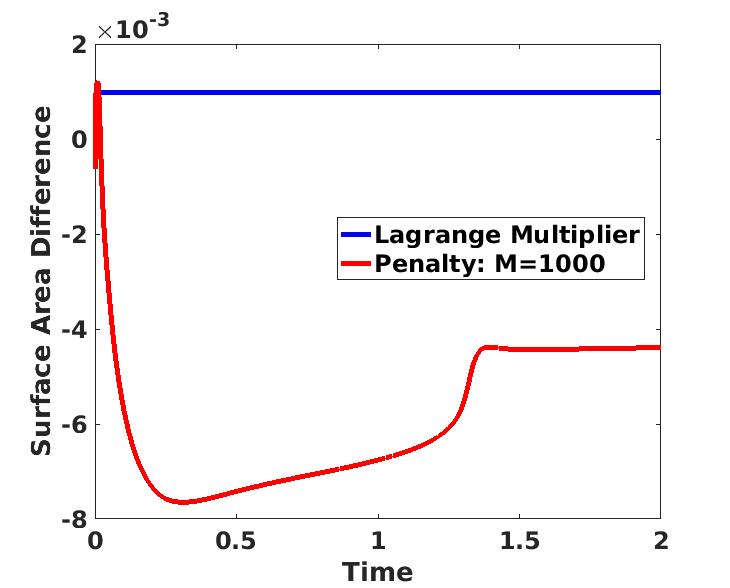}}
\caption{Evolution of the volume  and surface area differences  by the MSAV scheme  $\gamma=\eta=10^{-3}$ and  the BDF2 scheme  of  first approach  with $\delta t=2\times 10^{-4}$.}\label{LGM-ball4-3D}
\end{figure}

\begin{figure}[htbp]
\centering
\includegraphics[width=0.32\textwidth,clip==]{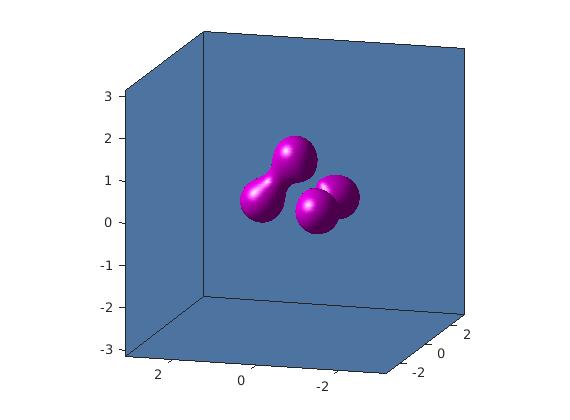}
\includegraphics[width=0.32\textwidth,clip==]{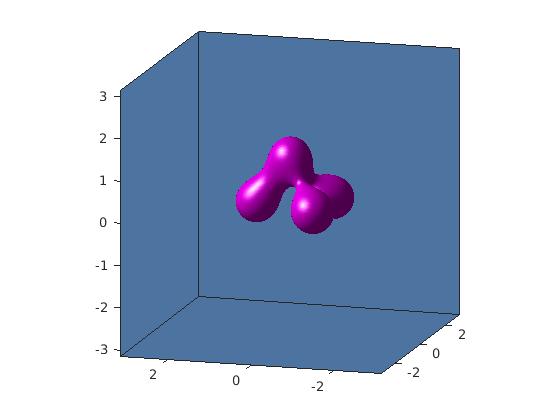}
\includegraphics[width=0.32\textwidth,clip==]{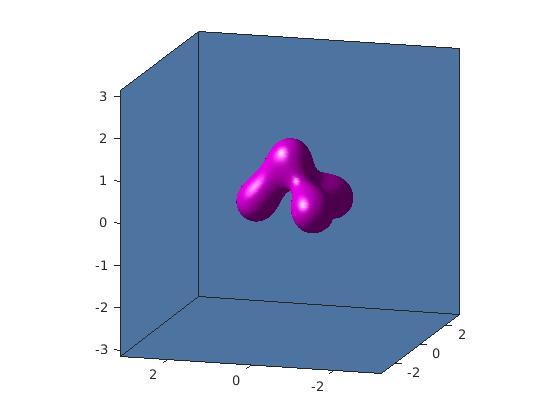}
\includegraphics[width=0.32\textwidth,clip==]{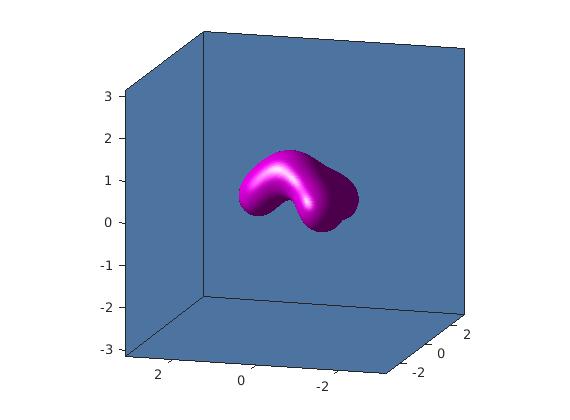}
\includegraphics[width=0.32\textwidth,clip==]{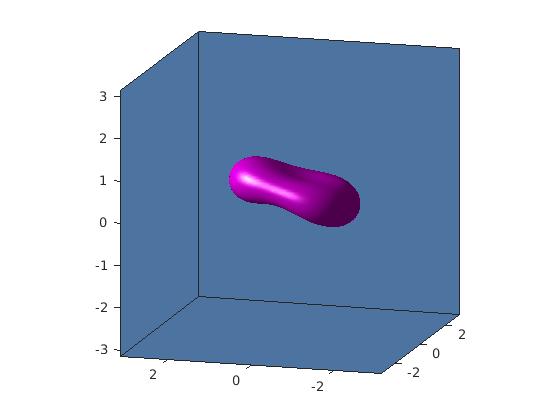}
\includegraphics[width=0.32\textwidth,clip==]{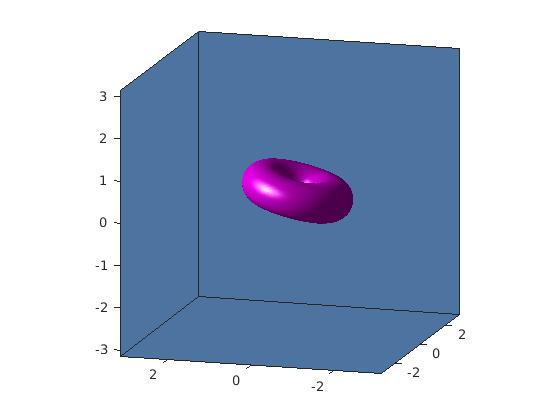}
\caption{Collision of four 3D close-by spherical vesicles by using the BDF2 of first approach) with the time step size $\delta t=2\times 10^{-4}$. Snapshots of  isosurface
of $\phi$  at $t=0.01,0.02, 0.04, 0.2,1,2$.}\label{collision3-3D}
\end{figure}

\begin{figure}[htbp]
\centering
\includegraphics[width=0.45\textwidth,clip==]{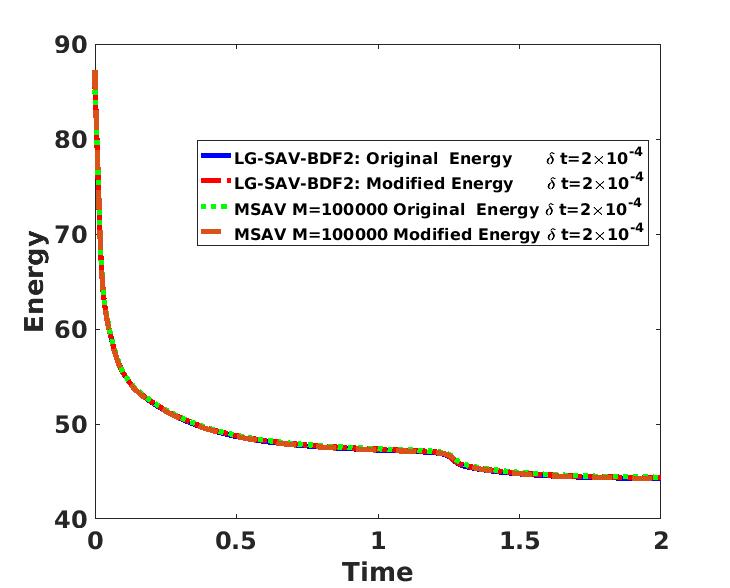}
\caption{Energy evolutions with  different approaches using the initial condition \eqref{ini_four}. }\label{energy}
\end{figure}



\subsection{Additional simulations of 3D vesicle membrane model}
In order to further demonstrate the accuracy and robustness of our new Lagrangian multiplier approach,  we perform  some additional simulations of 3D vesicle membrane model.  As the  first example, we set  four close-by  spheres  as the initial profile  which is formulated as 
\begin{equation}
\phi(x,y,z,0)=\sum\limits_{i=1}^4\tanh(\frac{r_i-\sqrt{(x-x_i)^2+(y-y_i)^2+(z-z_i)^2}}{\sqrt{2}\eps}) + 3, 
\end{equation}
where $r_i=\frac{\pi}{6}$, $x_i=0$, $(y_1,y_2,y_3,y_4)=(\frac{\pi}{4},-\frac{\pi}{4},\frac{3\pi}{4},-\frac{3\pi}{4})$ and $z_i=0$ for $i=1,2,3,4$.
In Fig.\,\ref{cylinder4-3D}, we plot several snapshots  of the iso-surface  $\{\phi=0\}$ by using BDF2  scheme of third  approach with  $\delta t=10^{-4}$.   It is observed that initially separated four spheres connect with each other at  $t=0.02$ and gradually merge into a  cylinder shape at $t=1$.  This  is consistent with results in \cite{du2006simulating}.

As the  second example,  we start with a more complicated initial condition given by
\begin{equation}\label{ini_six}
\phi(x,y,z,0)=\sum\limits_{i=1}^6\tanh(\frac{r_i-\sqrt{(x-x_i)^2+(y-y_i)^2+(z-z_i)^2}}{\sqrt{2}\eps}) + 5, 
\end{equation}
where  $r_i=\frac{\pi}{6}$, $(x_1,x_2,x_3,x_4,x_5,x_6)=(-\frac{\pi}{4},\frac{\pi}{4},0,\frac{\pi}{2},-\frac{\pi}{2},0)$, $(y_1,y_2,y_3,y_4,y_5,y_6)=(-\frac{\pi}{4},-\frac{\pi}{4},\frac{\pi}{4},\frac{\pi}{4},\frac{\pi}{4},-\frac{3\pi}{4})$ and $z_i=0$ for $i=1,2,\dots ,6$.

In Fig.\,\ref{collision6-3D}, we plot several snapshots of iso-surface  $\{\phi=0\}$ at various time $t=0,0.01,0.02,0.2,0.5,2$ by using the BDF2 scheme of first  approach.  We observe from this figure that the initially separated  spheres connect with each other in  a short time, and eventually merge into a big vesicle. The shape of final steady state  is also observed in \cite{du2006simulating}.

\begin{figure}[htbp]
\centering
\includegraphics[width=0.32\textwidth,clip==]{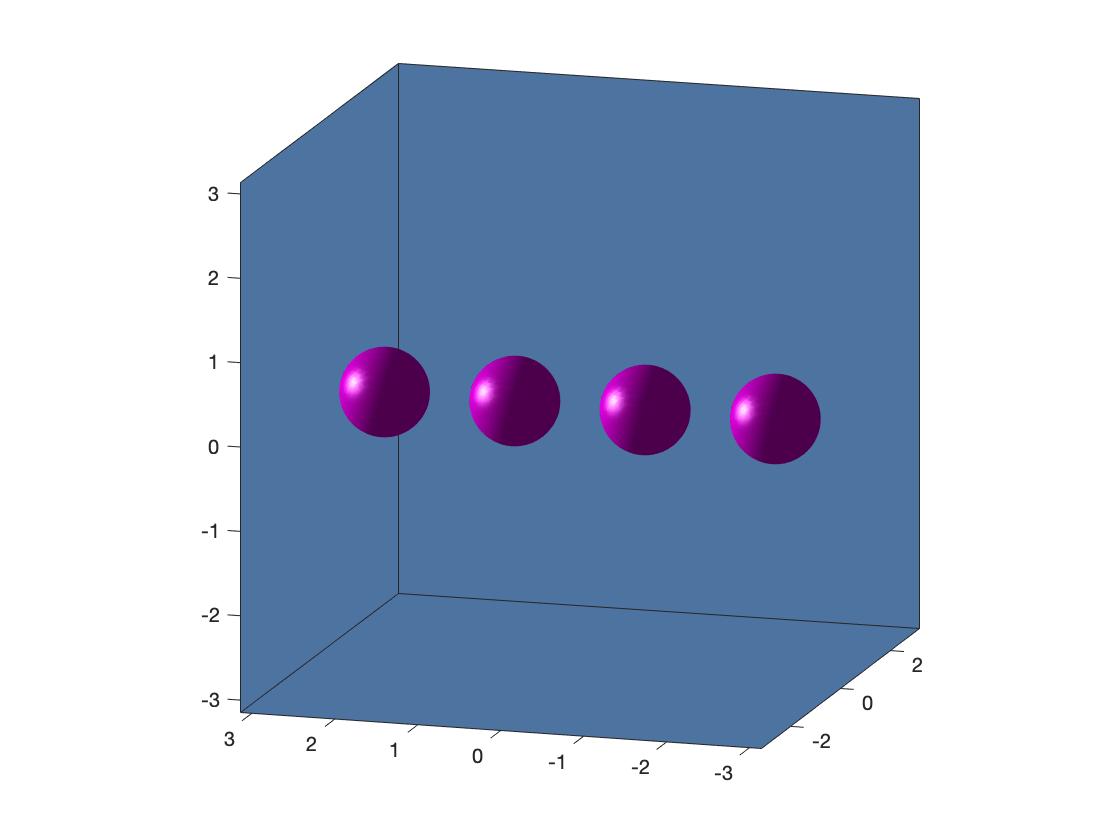}
\includegraphics[width=0.32\textwidth,clip==]{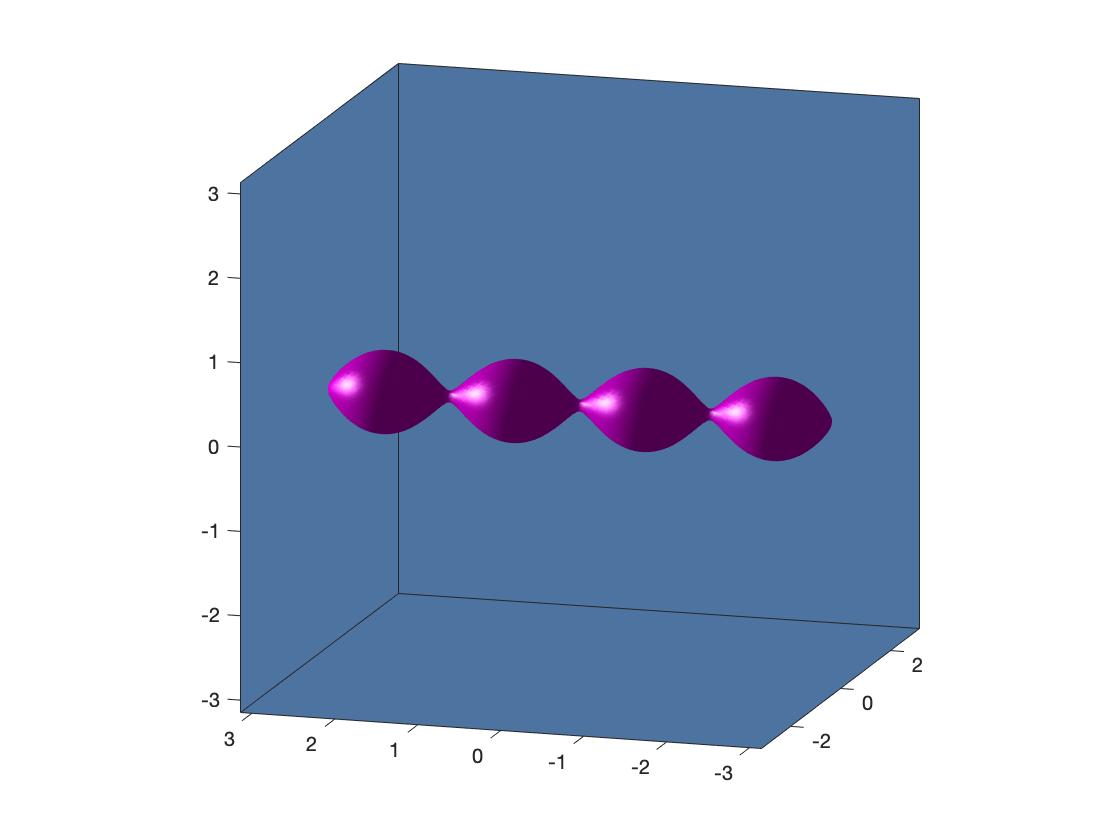}
\includegraphics[width=0.32\textwidth,clip==]{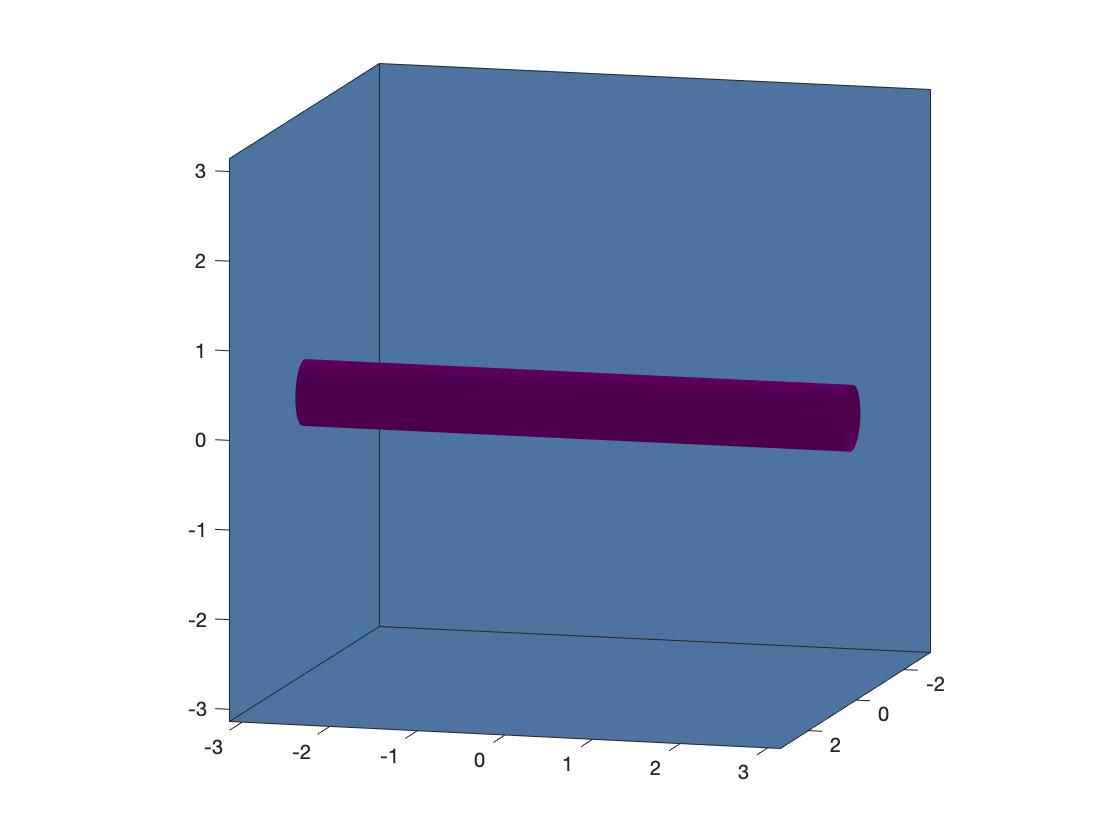}
\caption{Collision of four close-by spherical vesicles by  using the third approach  with  time step size $\delta t= 1\times 10^{-4}$. Snapshots of  isosurface
of $\{\phi=0\}$ at t = 0, 0.02, 1.}\label{cylinder4-3D}
\end{figure}

\begin{figure}[htbp]
\centering
\includegraphics[width=0.32\textwidth,clip==]{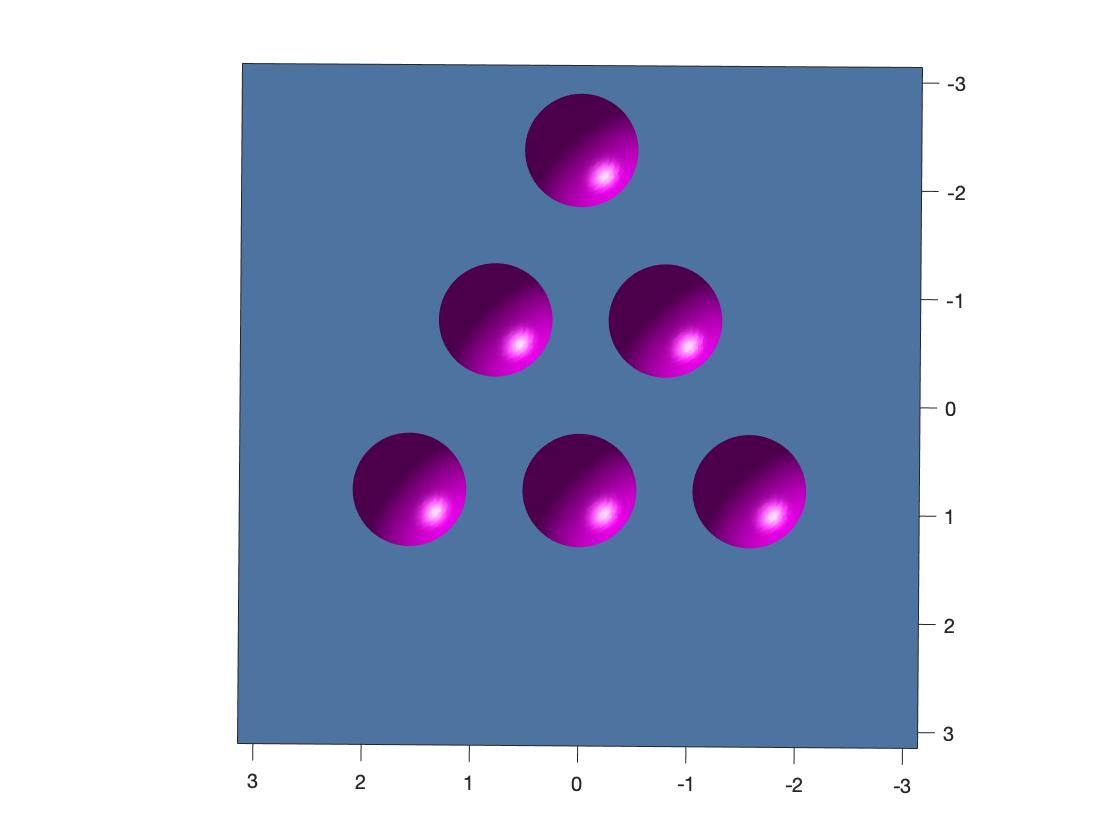}
\includegraphics[width=0.32\textwidth,clip==]{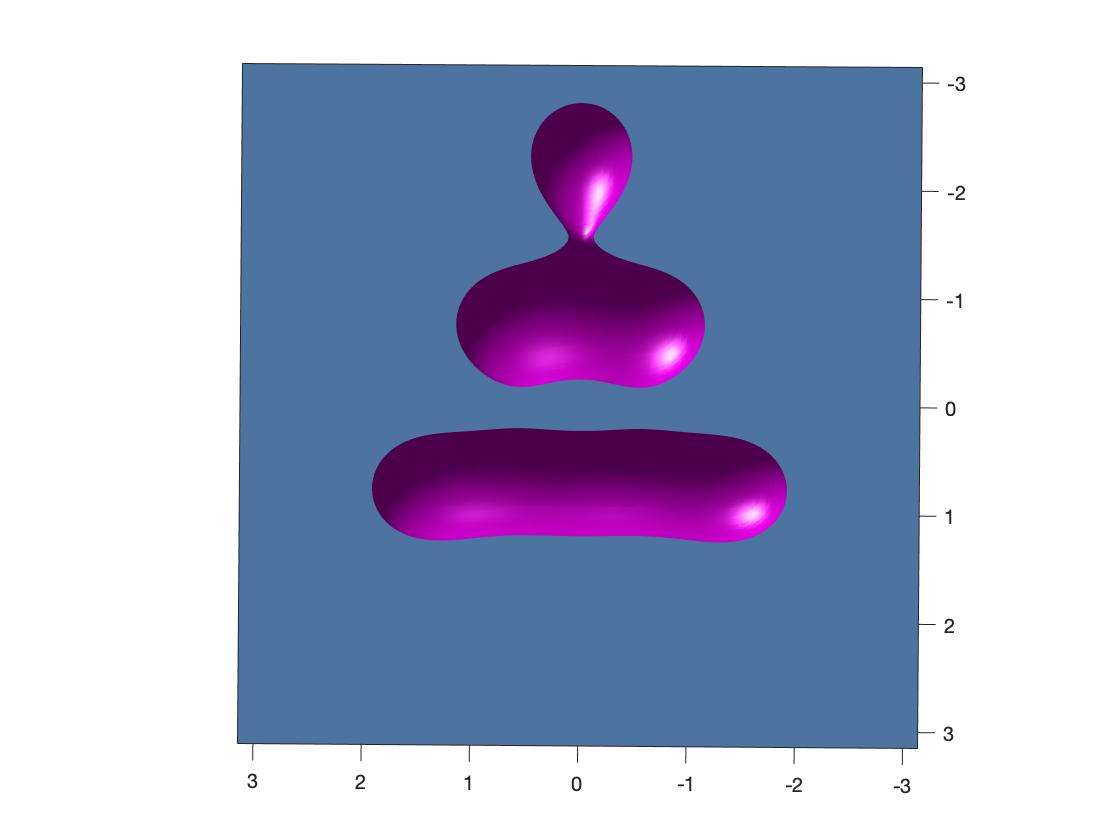}
\includegraphics[width=0.32\textwidth,clip==]{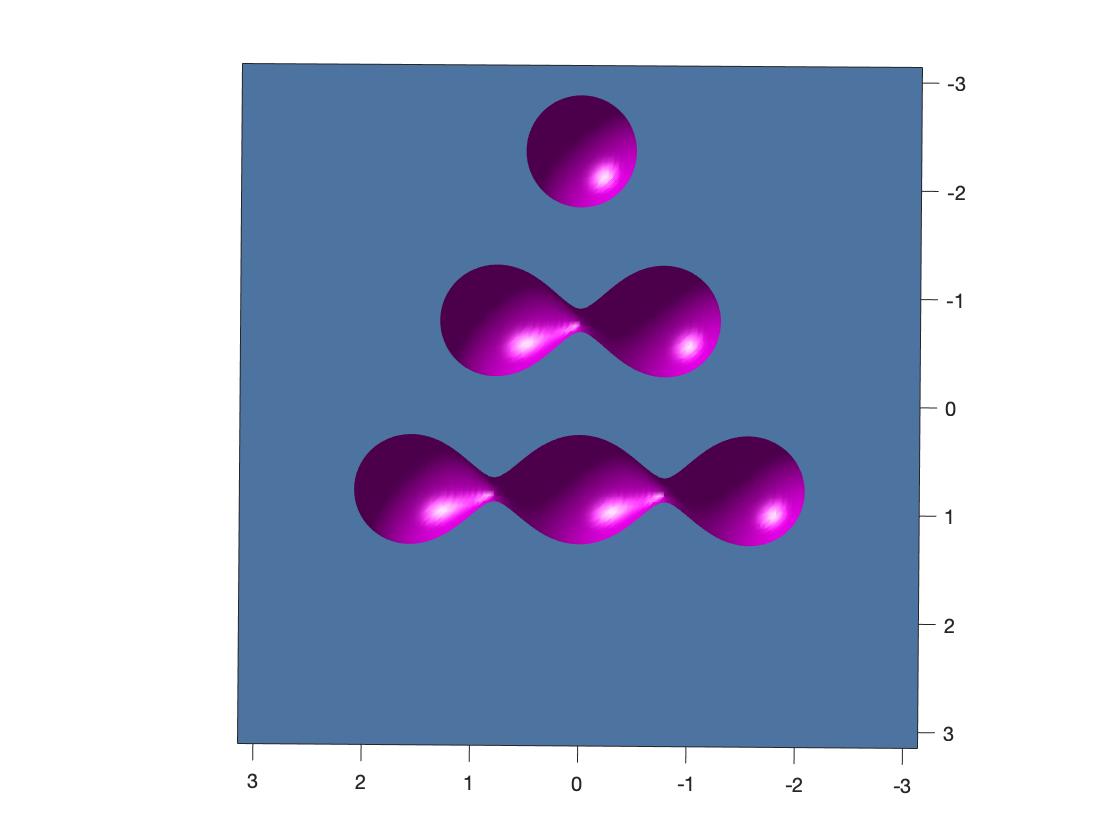}
\includegraphics[width=0.32\textwidth,clip==]{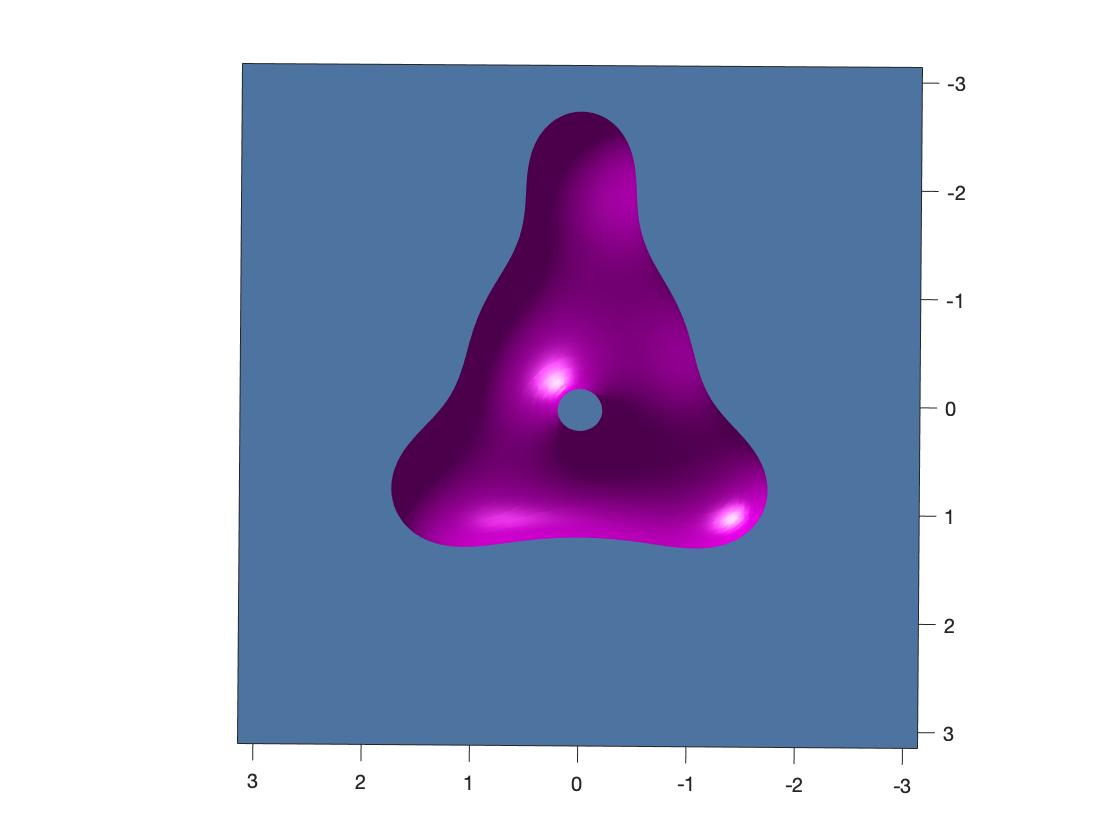}
\includegraphics[width=0.32\textwidth,clip==]{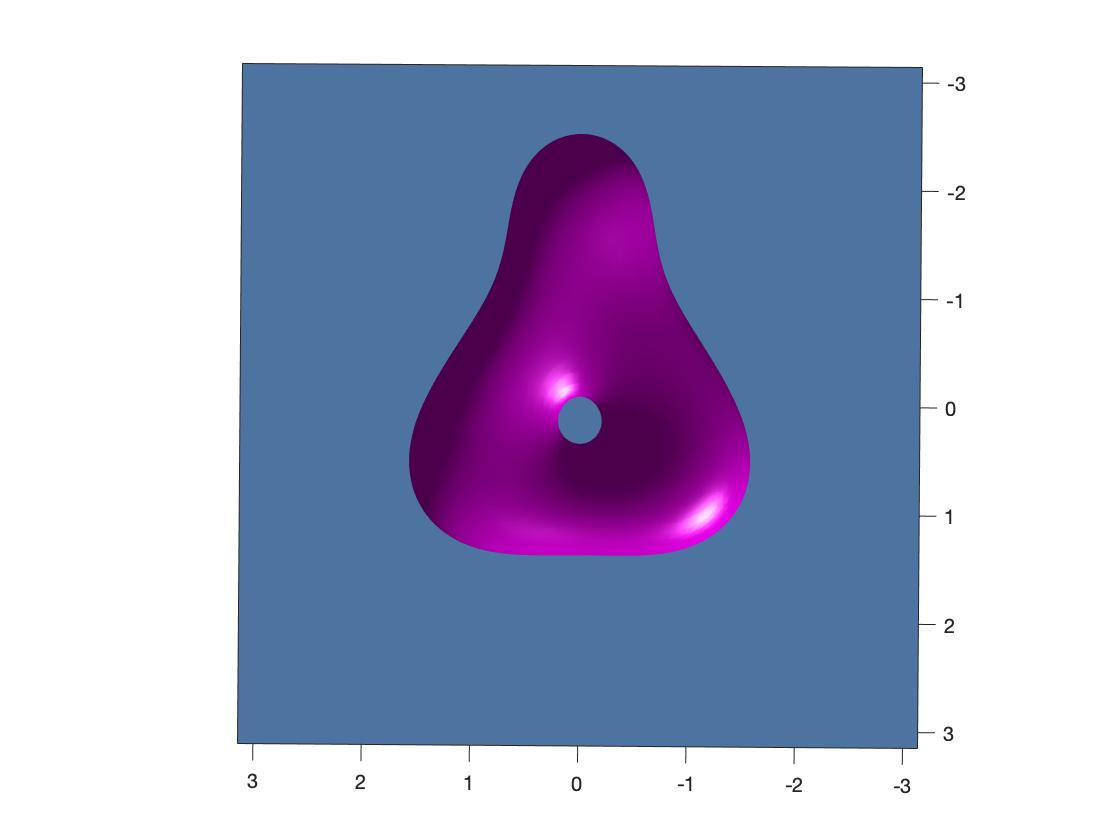}
\includegraphics[width=0.32\textwidth,clip==]{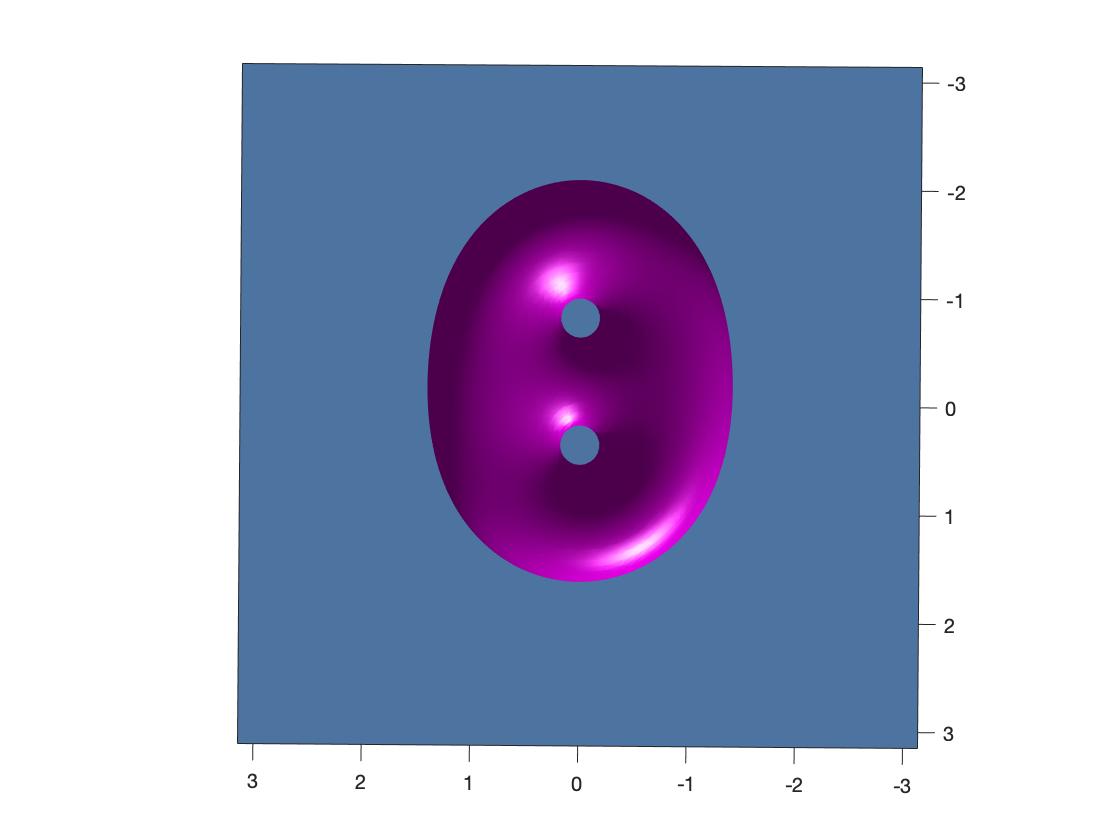}
\caption{Collision of six close-by spherical vesicles  by using the first approach with  time step size $\delta t= 1\times 10^{-4}$. Snapshots of  isosurface
of $\{\phi=0\}$  at t = 0, 0.01,0.02, 0.2,0.5,2.}\label{collision6-3D}
\end{figure}

 We also plot, in Fig.\,\ref{vesicle_lagrange},  the evolution of Lagrange multiplier $\lambda $  for these two examples.  We observe that the Lagrange multiplier $\lambda$ will change rapidly at the begining and gradually settle down to a steady state value. We also observe that  $\lambda$ can become  negative.

\begin{figure}[htbp]
\centering
\includegraphics[width=0.45\textwidth,clip==]{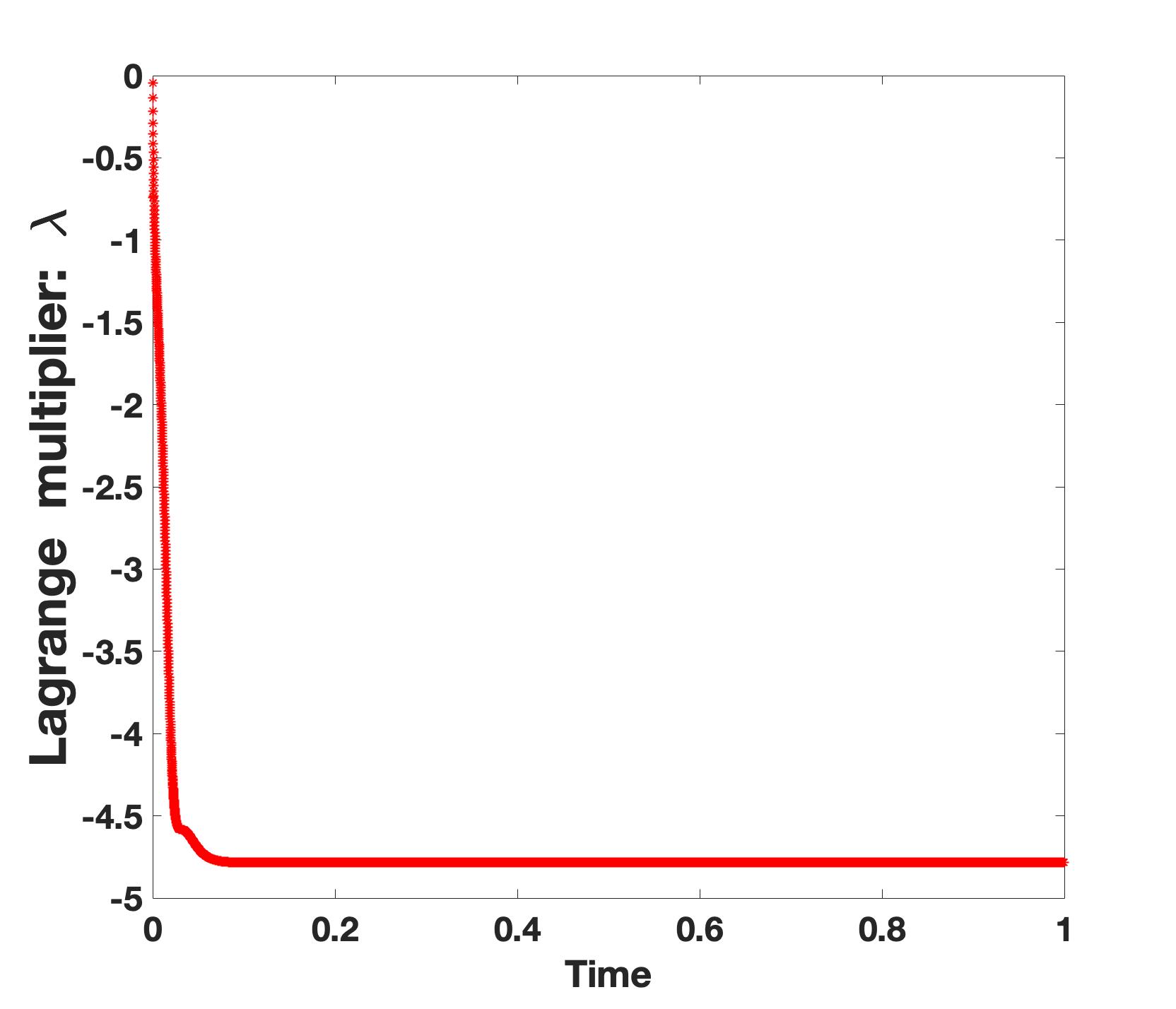}
\includegraphics[width=0.45\textwidth,clip==]{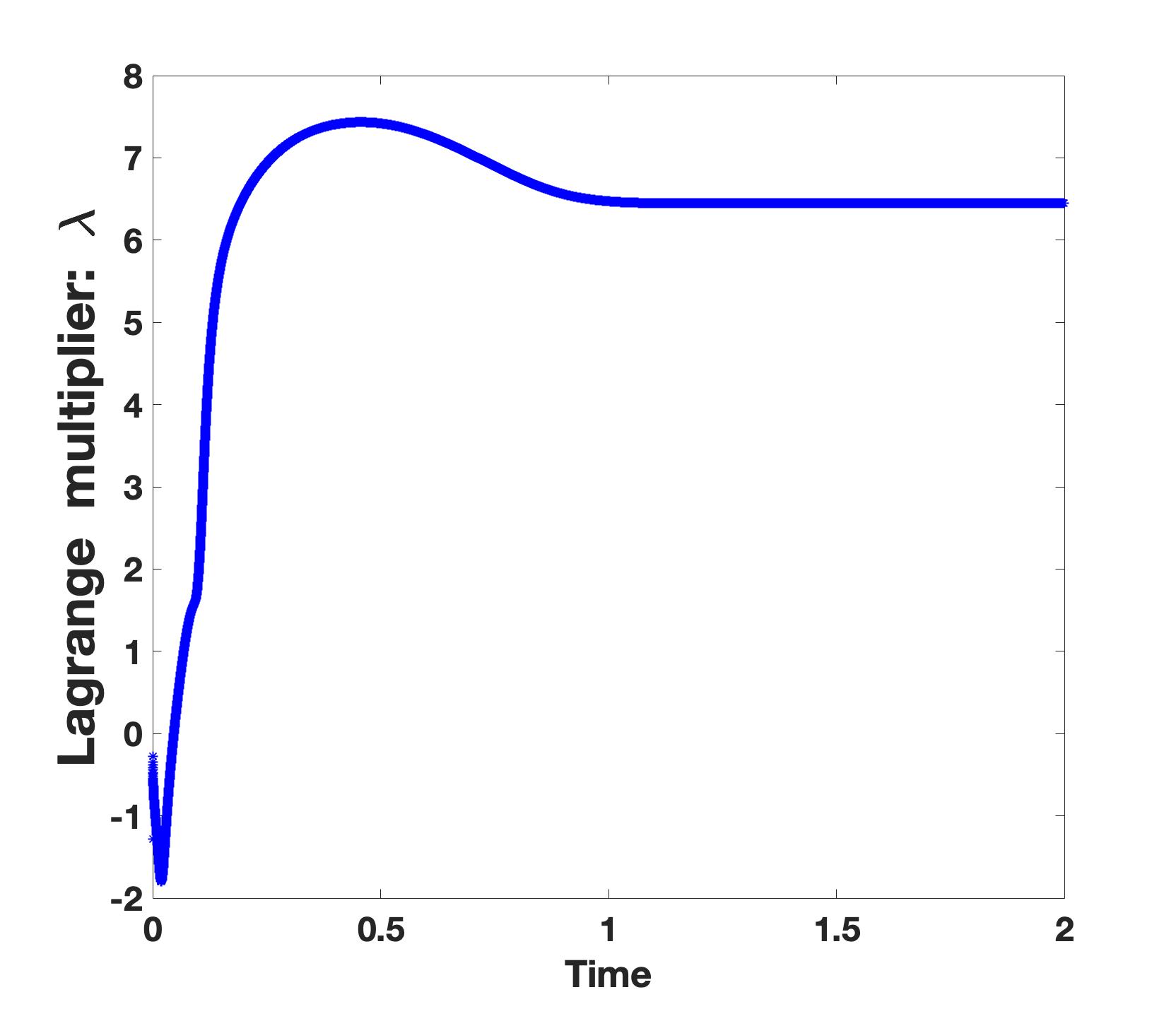}
\caption{Evolution of Lagrange multiplier  $\lambda$ for the examples in  Fig.\,\ref{cylinder4-3D} and Fig.\,\ref{collision6-3D}. }\label{vesicle_lagrange}
\end{figure}


\subsection{Optimal partition model}
We  present below numerical experiments  for the optimal  partition problem \eqref{part:1}-\eqref{part:3}.  The computational domain is set to  $\Omega = [-\pi,\pi)^2$.  The boundary condition is periodic and  the Fourier-spectral method is adopted to discretize the space variables.  In all computations,  we use  $128^2$ Fourier modes with interfacial width parameter $\epsilon=0.01$.  To better visualize the subdomain evolution, we assign an integer valued marker function $\chi_i$ which equals to $i$ in the region $i$, and $\chi_i=0$ in other regions.  The initial condition for $\phi_i$ is  set to be the marker function $\chi_i$.  The BDF2 scheme of first approach with time step $\delta t=10^{-5}$ is used for all examples below.

For the  first example,  we take  $m=4$ with four connected quadrilaterals  as the  initial configuration.  In  Fig.\,\ref{LGM-partition4-2D}, the  evolutions  of the phase configuration at various times  are depicted. We observe  that patterns in the partition  gradually evolve into  hexagonal patterns as the final steady state.

For the optimal partition  problem,  it is shown in \cite{CL1}  that all Lagrange multipliers   are positive and will decay with time.  In  Fig.\,\ref{part4_lagrange}, we plot   evolutions of the  four Lagrange multipliers  and observe that they are indeed positive and decay with time. 

Next,   we increase the numbers of partitions to $m=8$, and plot in  Fig.\,\ref{LGM-partition8-2D} the  evolutions  of the phase configuration at various times. We observe that 
 the  partition eventually evolves into a honeycomb  shape with mostly hexagonal patterns. Similar behaviors are observed with $m=10$ as shown in Fig.\,\ref{LGM-partition10-2D}.
 
 These numerical results   are consistent with the numerical  simulations presented in \cite{du2008numerical}.

\begin{figure}[htbp]
\centering
\subfigure[t=0]{
\includegraphics[width=0.30\textwidth,clip==]{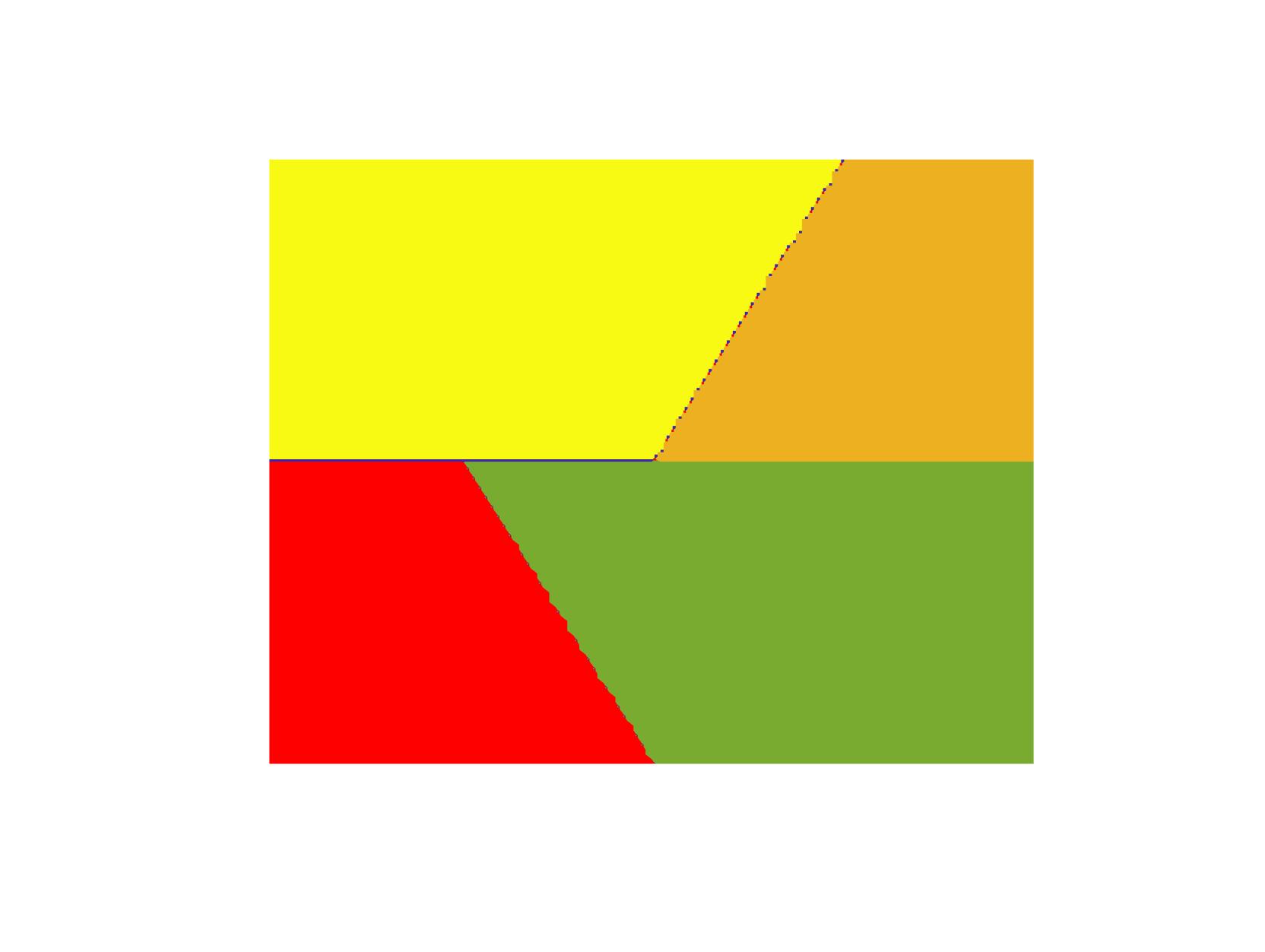}}
\subfigure[t=0.05]{
\includegraphics[width=0.30\textwidth,clip==]{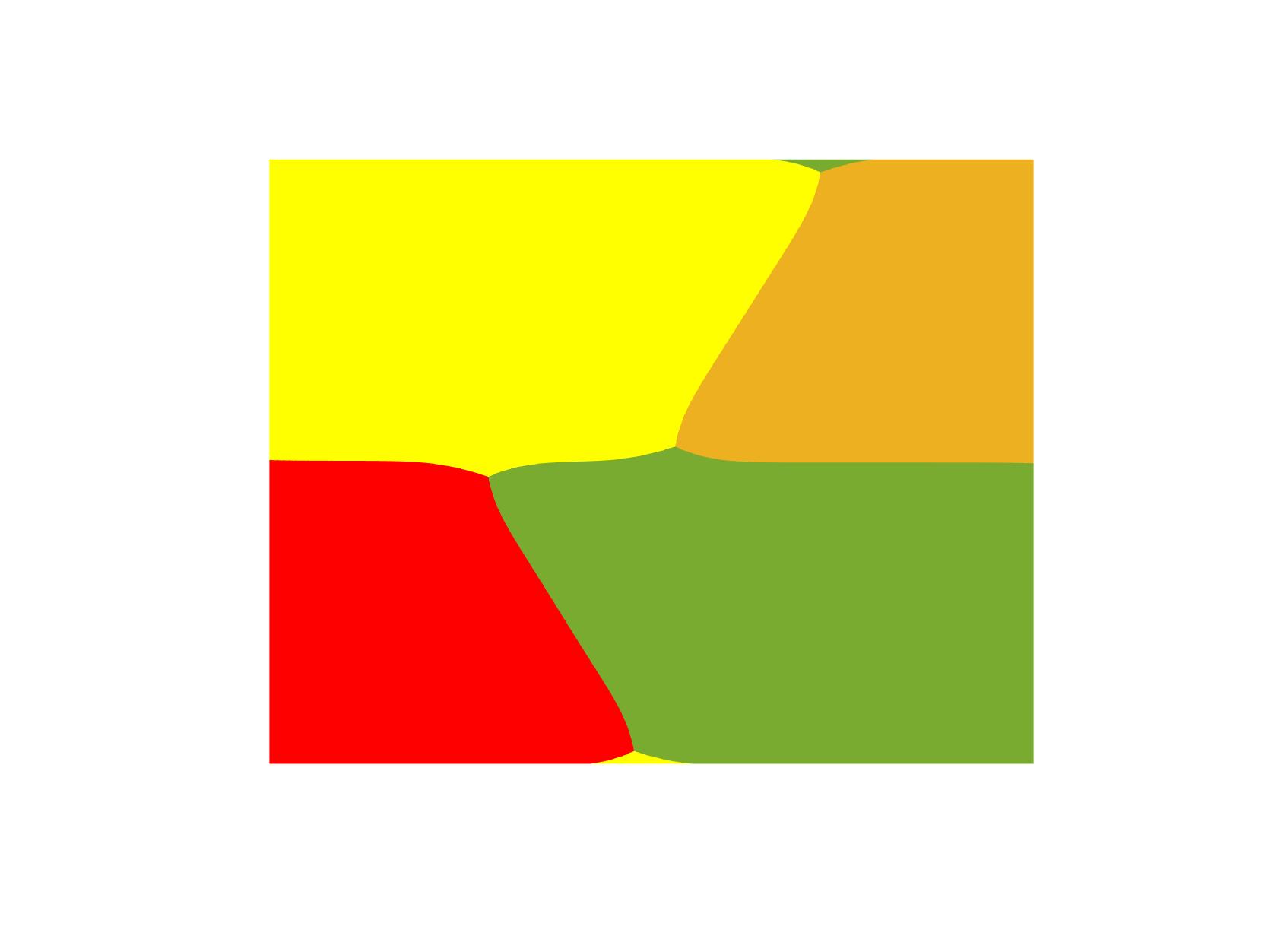}}
\subfigure[t=0.5]{
\includegraphics[width=0.30\textwidth,clip==]{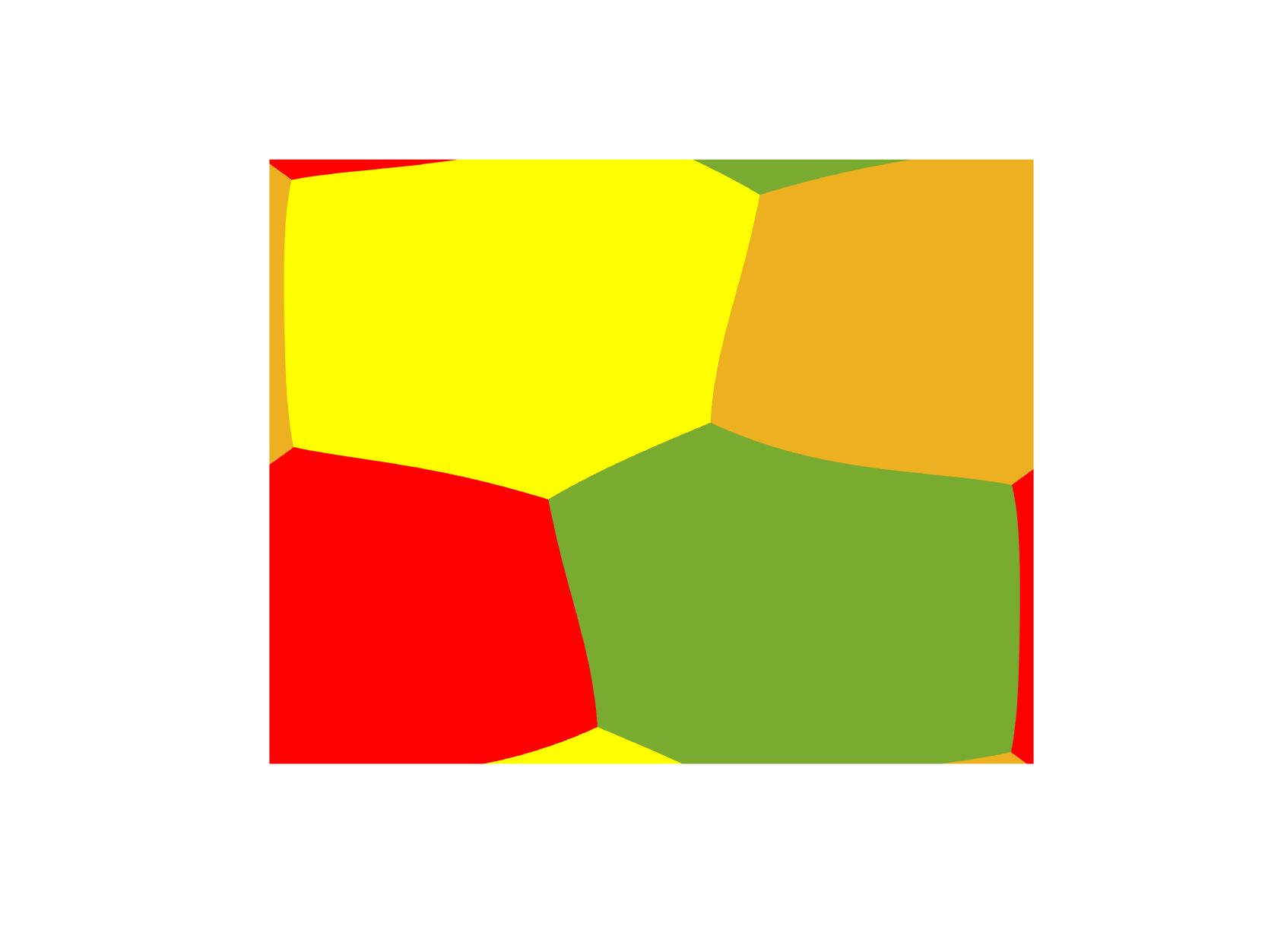}}
\subfigure[t=1]{
\includegraphics[width=0.30\textwidth,clip==]{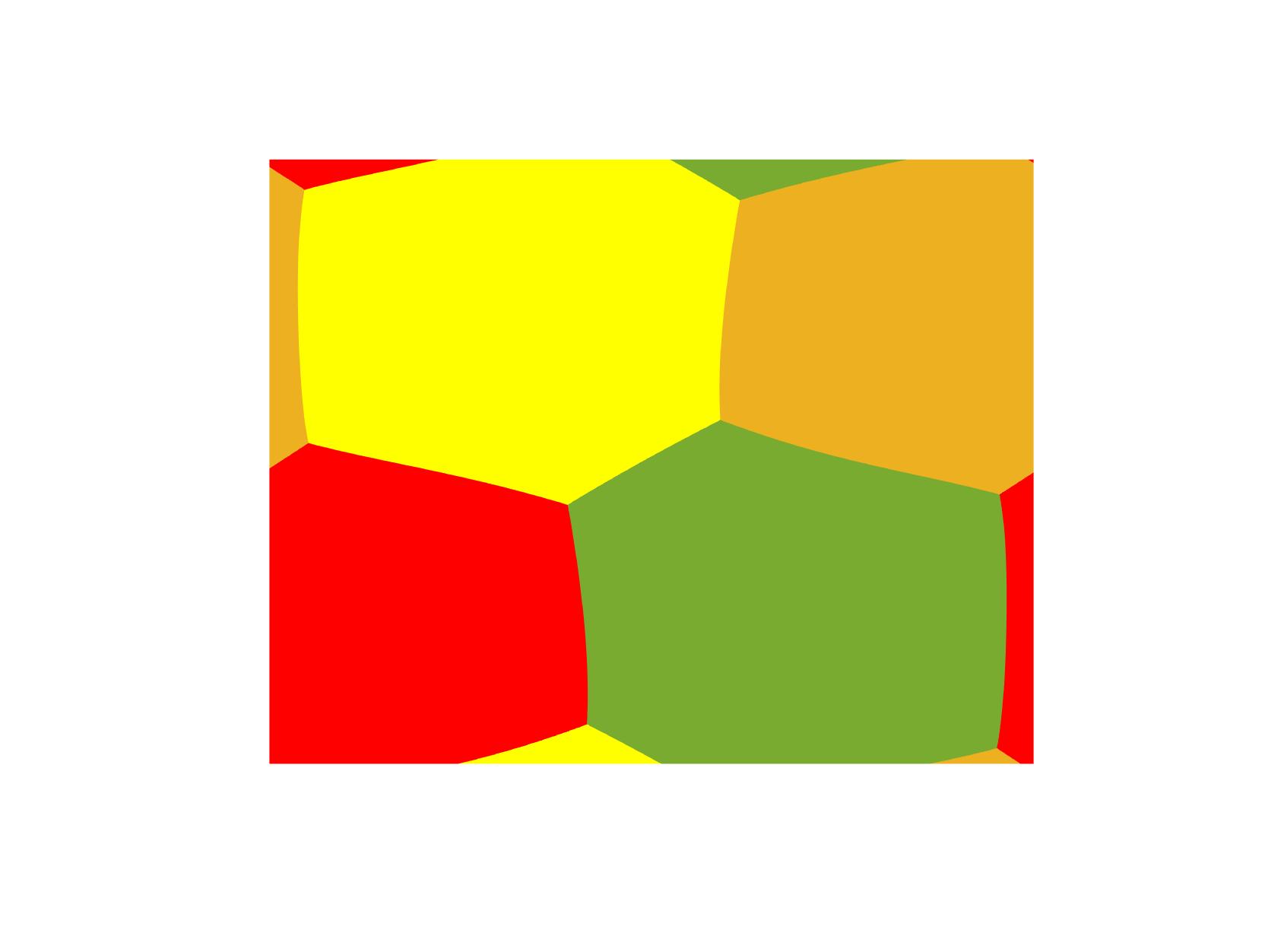}}
\subfigure[t=5]{
\includegraphics[width=0.30\textwidth,clip==]{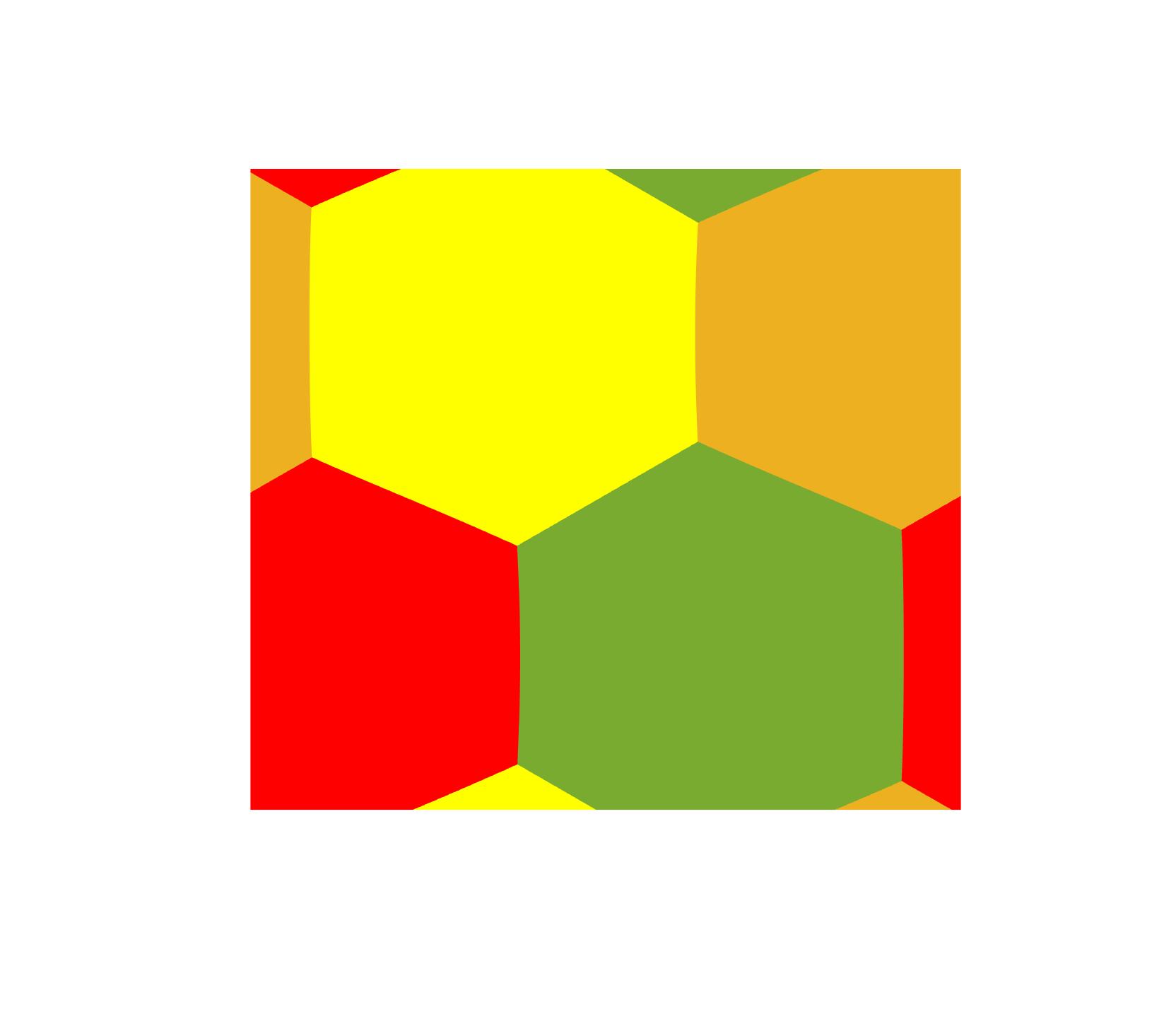}}
\subfigure[t=10]{
\includegraphics[width=0.30\textwidth,clip==]{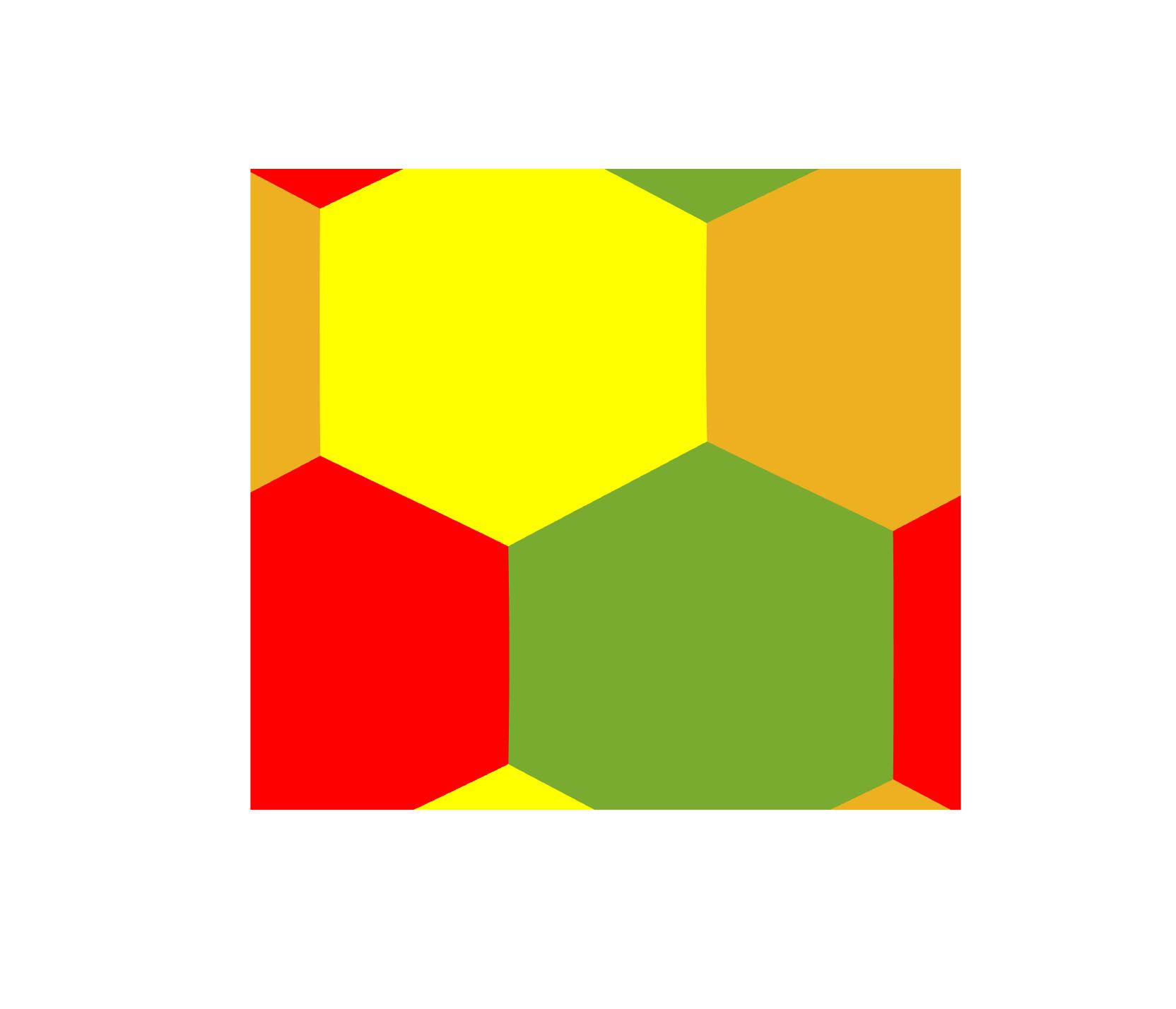}}
\caption{A 4-subdomain partition: initial partition and subdomains at times $t=0,0.05,0.5,1,5,10$ computed by the BDF2 scheme of first approach  with $\delta t=1\times 10^{-5}$.}\label{LGM-partition4-2D}
\end{figure}

\begin{figure}[htbp]
\centering
\includegraphics[width=0.45\textwidth,clip==]{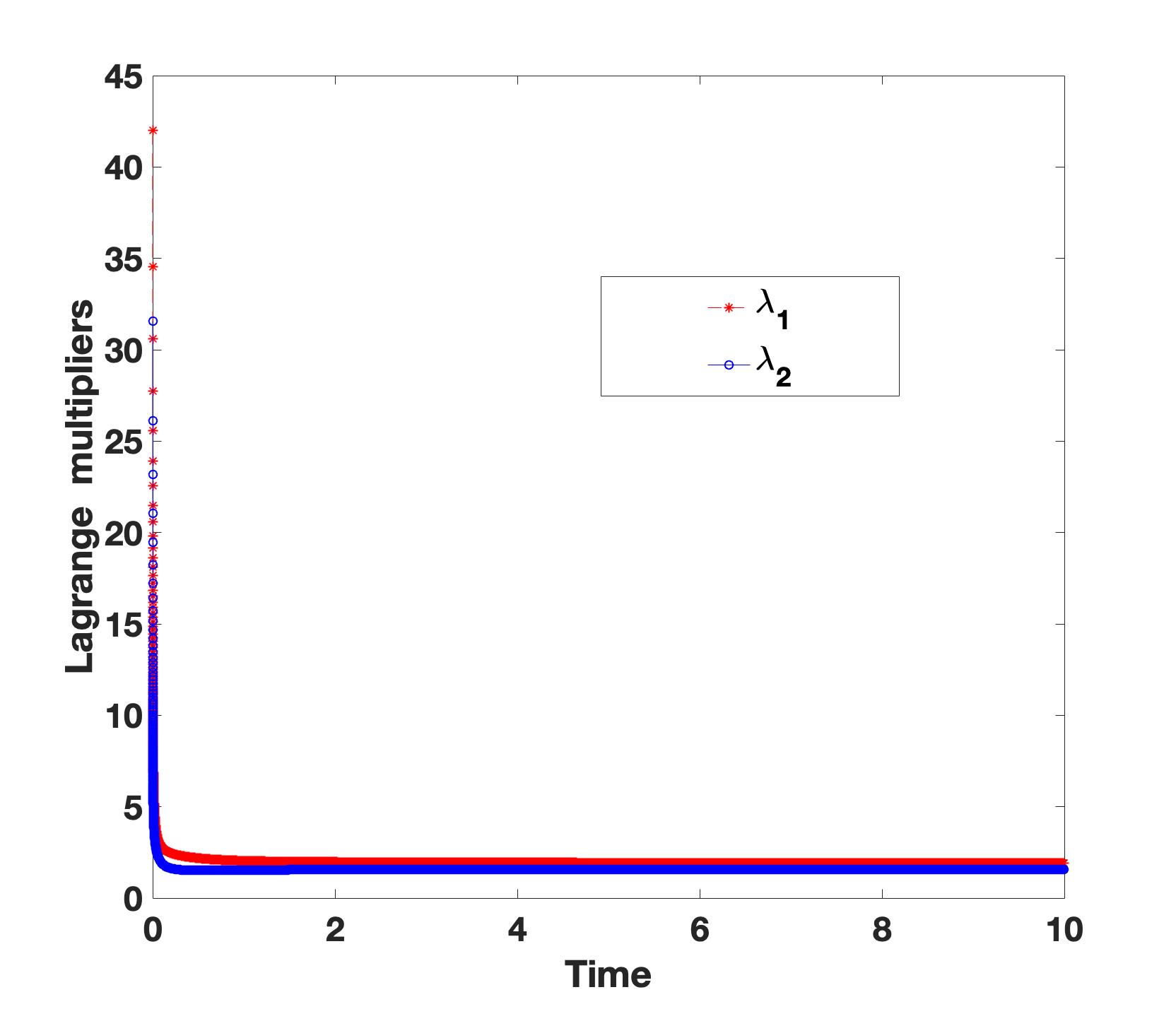}
\includegraphics[width=0.45\textwidth,clip==]{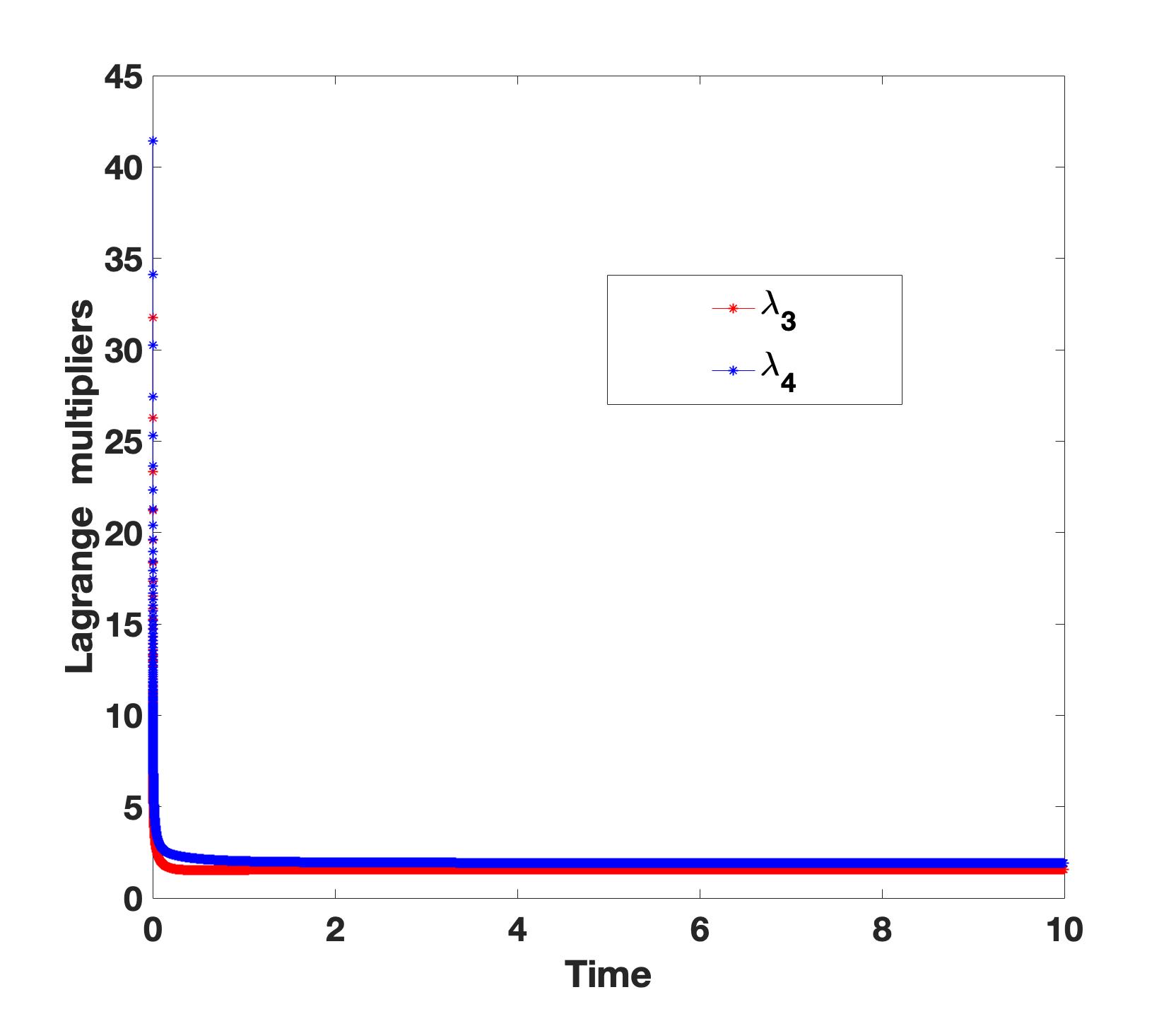}
\caption{Evolution of Lagrange Multipliers $\lambda_1,\lambda_2$ and $\lambda_3,\lambda_4$ with respect to time for 4-subdomain partition in Fig,\,\ref{LGM-partition4-2D}.}\label{part4_lagrange}
\end{figure}

\begin{figure}[htbp]
\centering
\subfigure[t=0]{
\includegraphics[width=0.30\textwidth,clip==]{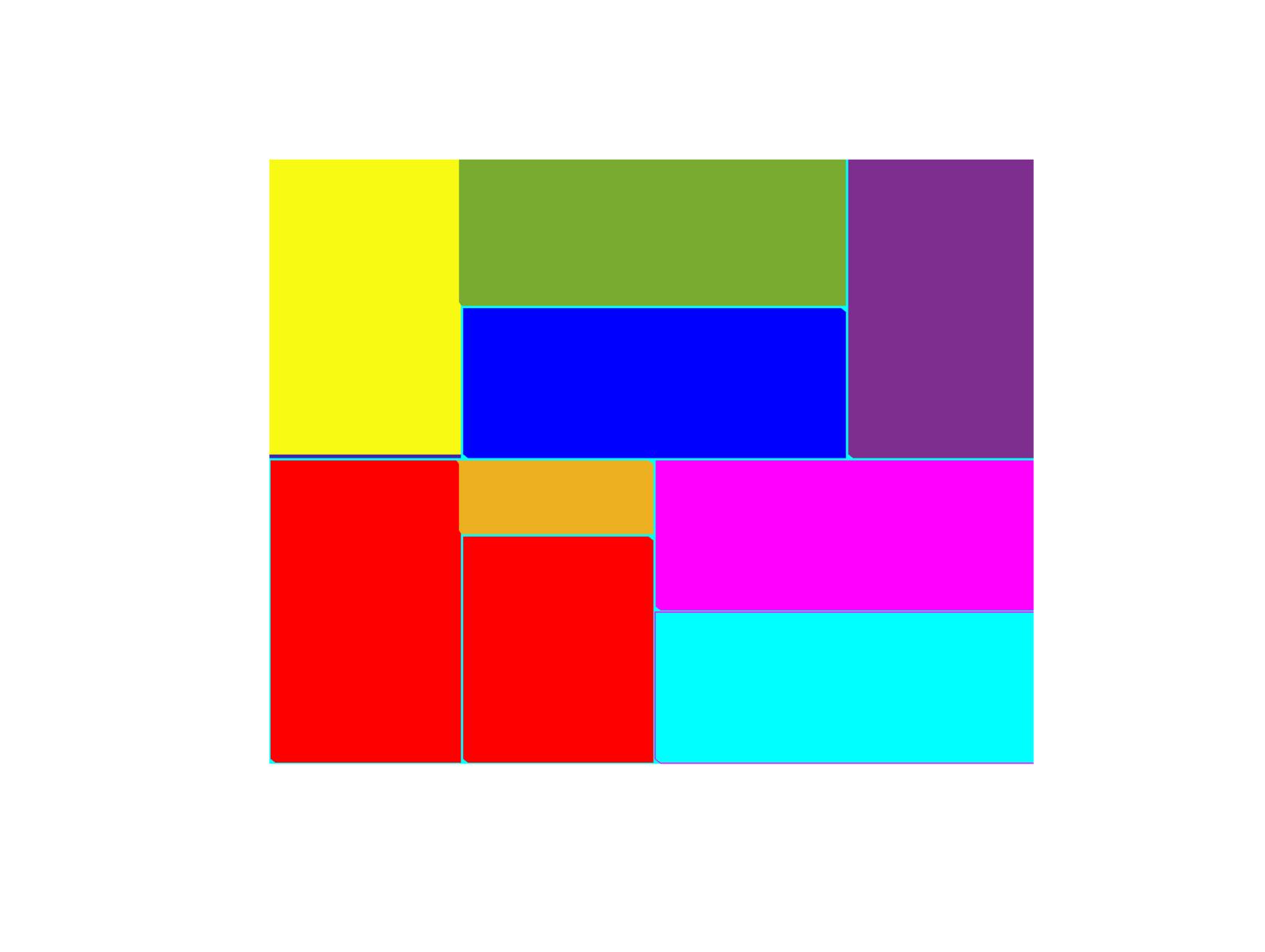}}
\subfigure[t=0.05]{
\includegraphics[width=0.30\textwidth,clip==]{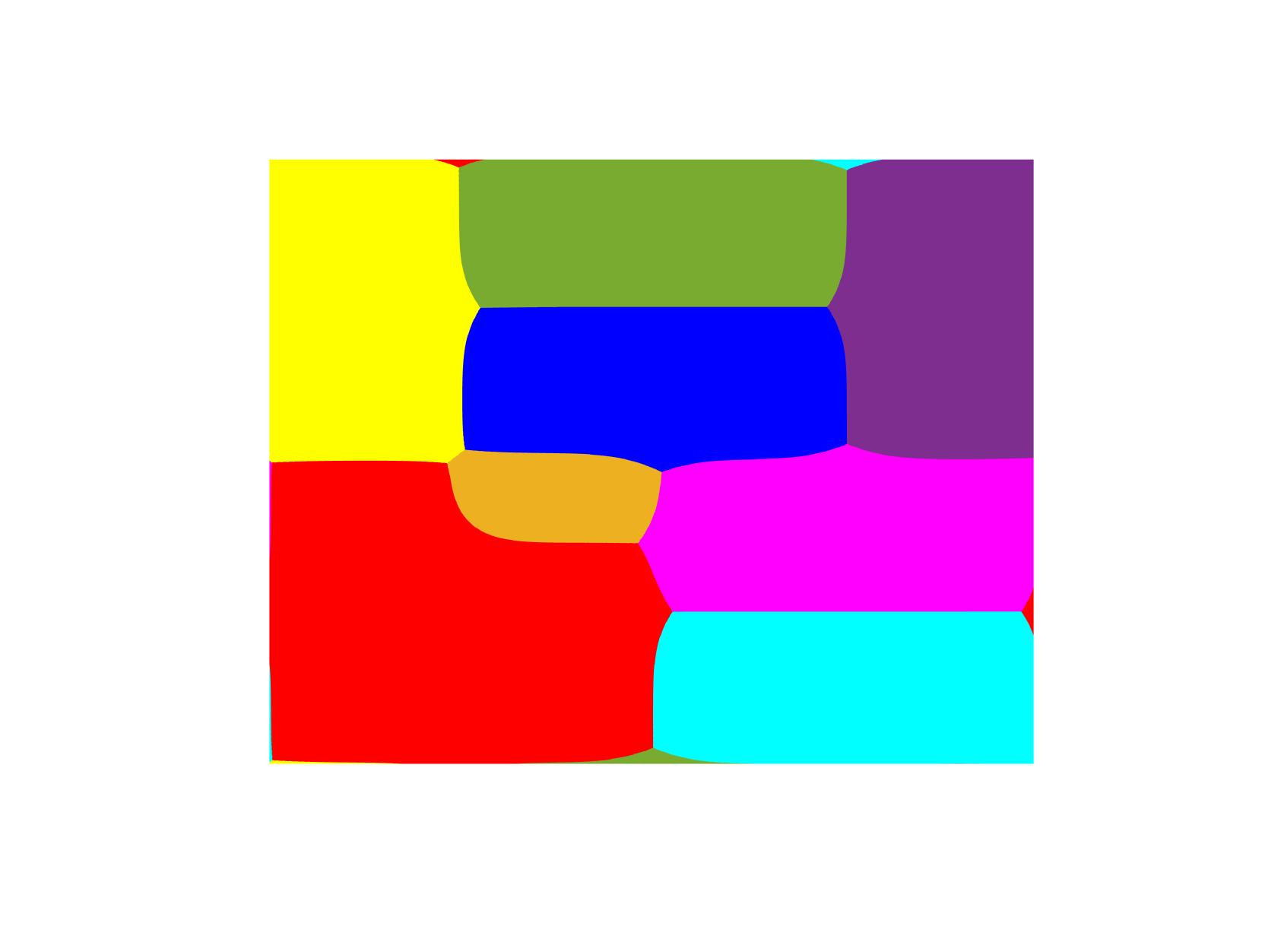}}
\subfigure[t=0.5]{
\includegraphics[width=0.30\textwidth,clip==]{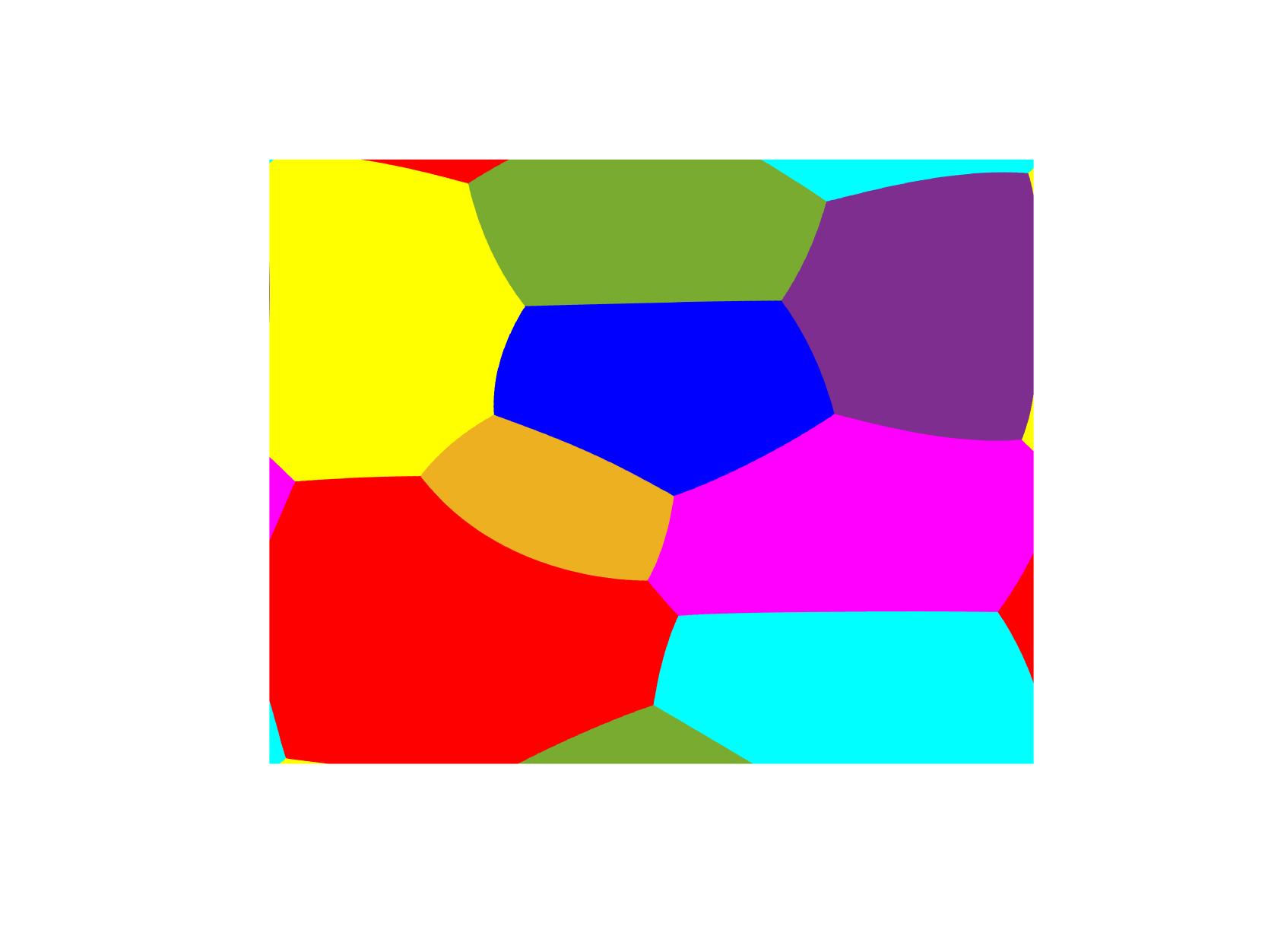}}
\subfigure[t=2]{
\includegraphics[width=0.30\textwidth,clip==]{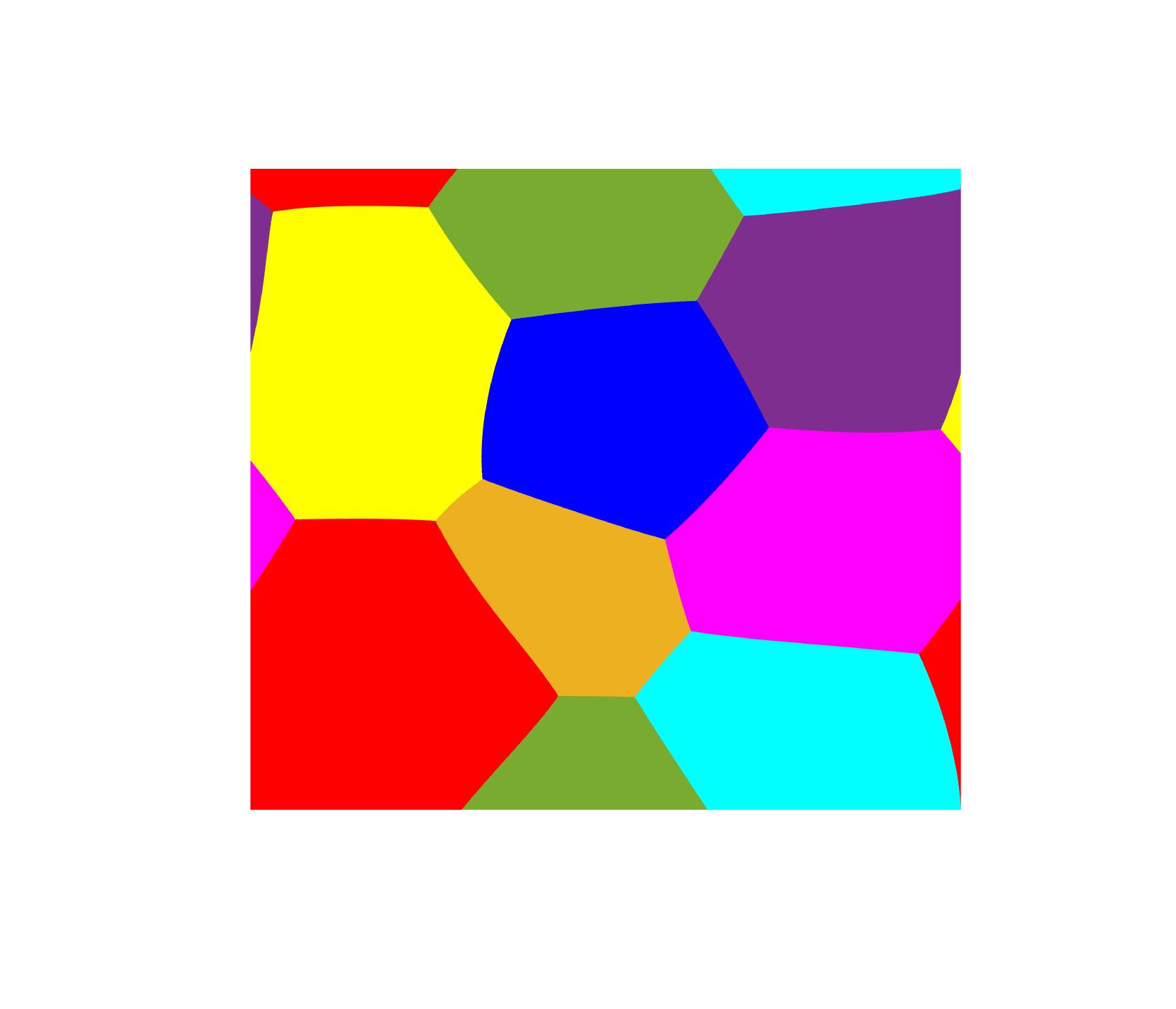}}
\subfigure[t=5]{
\includegraphics[width=0.30\textwidth,clip==]{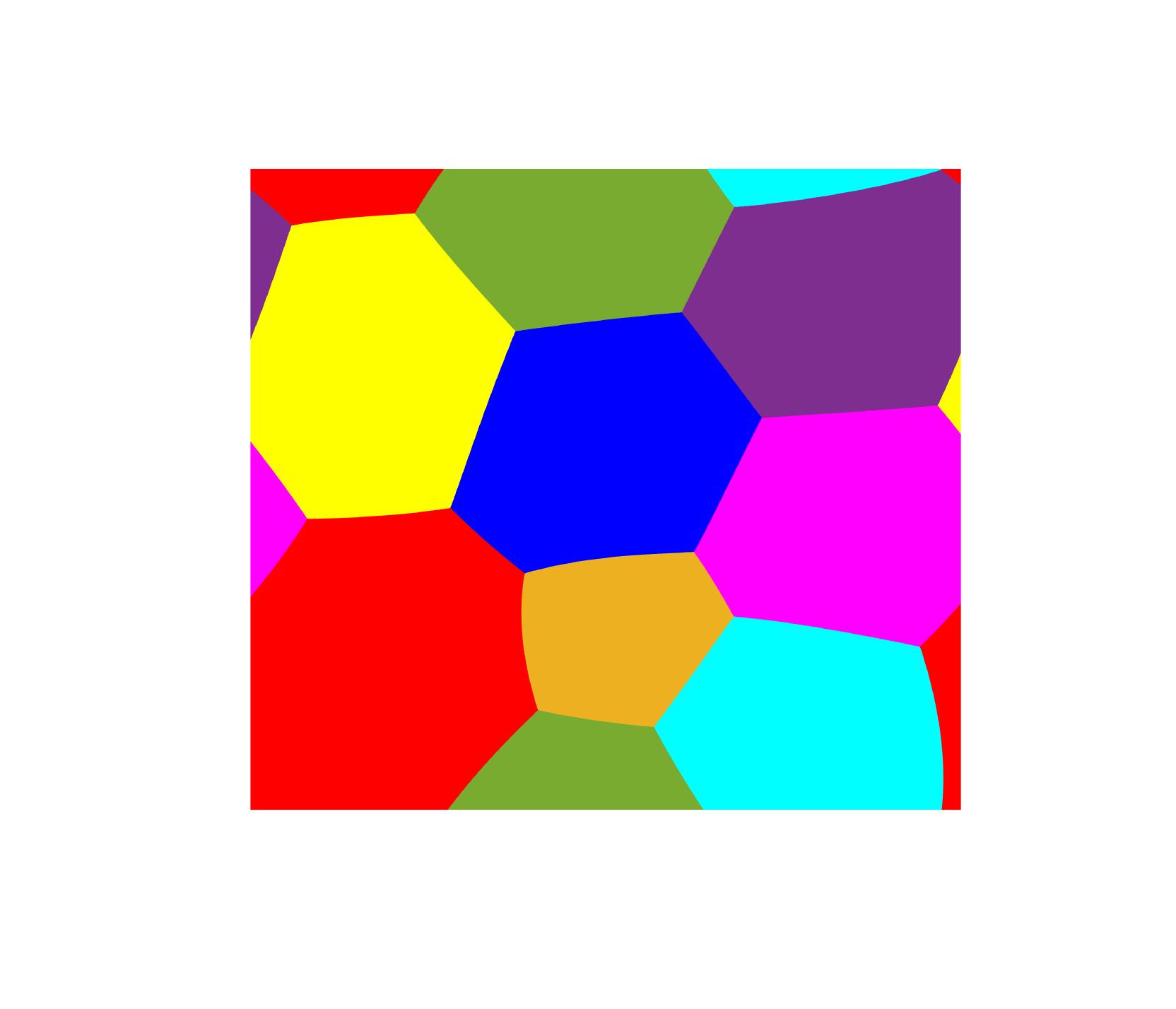}}
\subfigure[t=10]{
\includegraphics[width=0.30\textwidth,clip==]{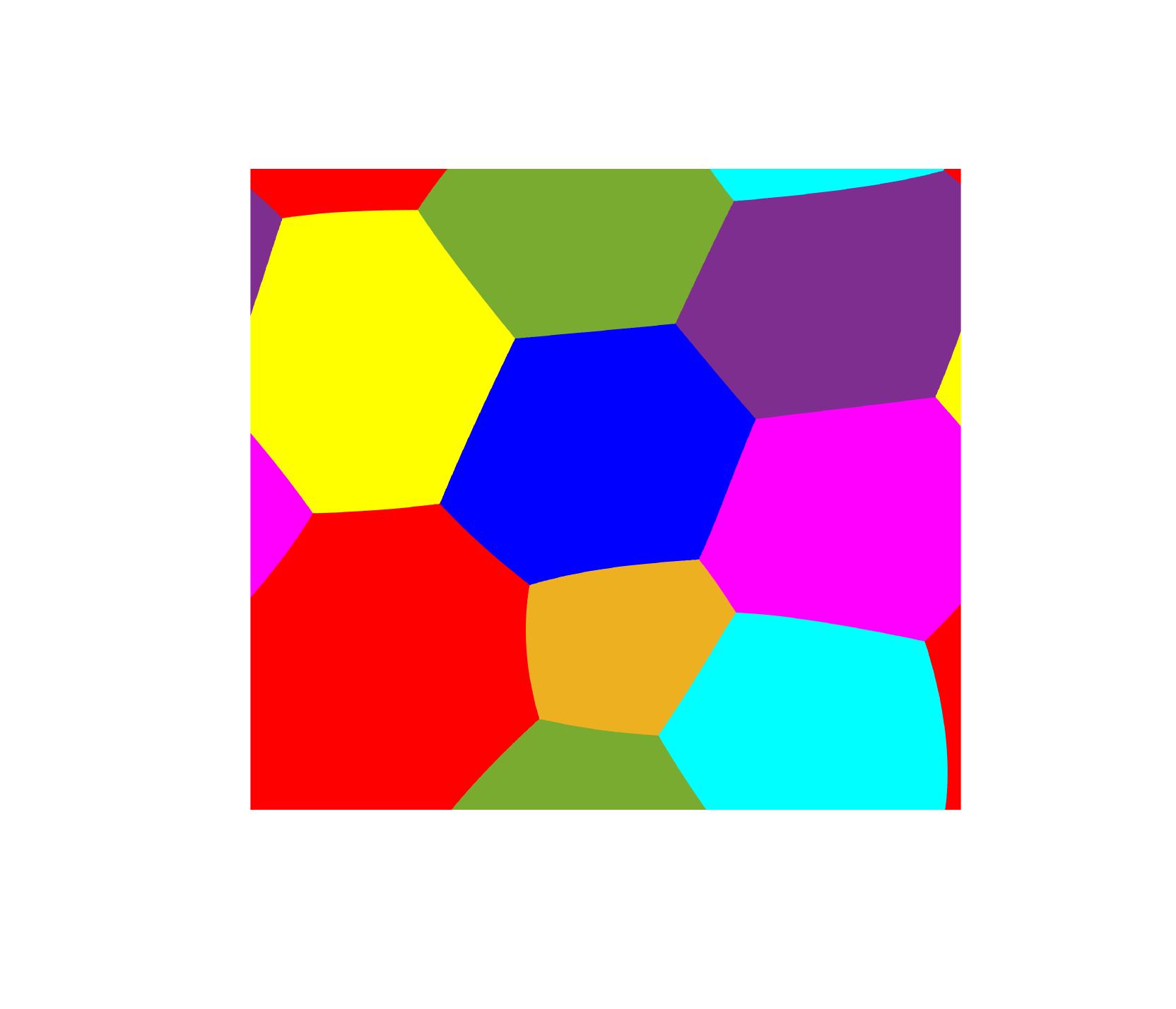}}
\caption{A 8-subdomain partition: initial partition and subdomains at times $t=0,0.05,0.5,2,5,10$ computed by the BDF2 scheme of first approach  with $\delta t=1\times 10^{-5}$.}\label{LGM-partition8-2D}
\end{figure}

\begin{figure}[htbp]
\centering
\subfigure[t=0]{
\includegraphics[width=0.30\textwidth,clip==]{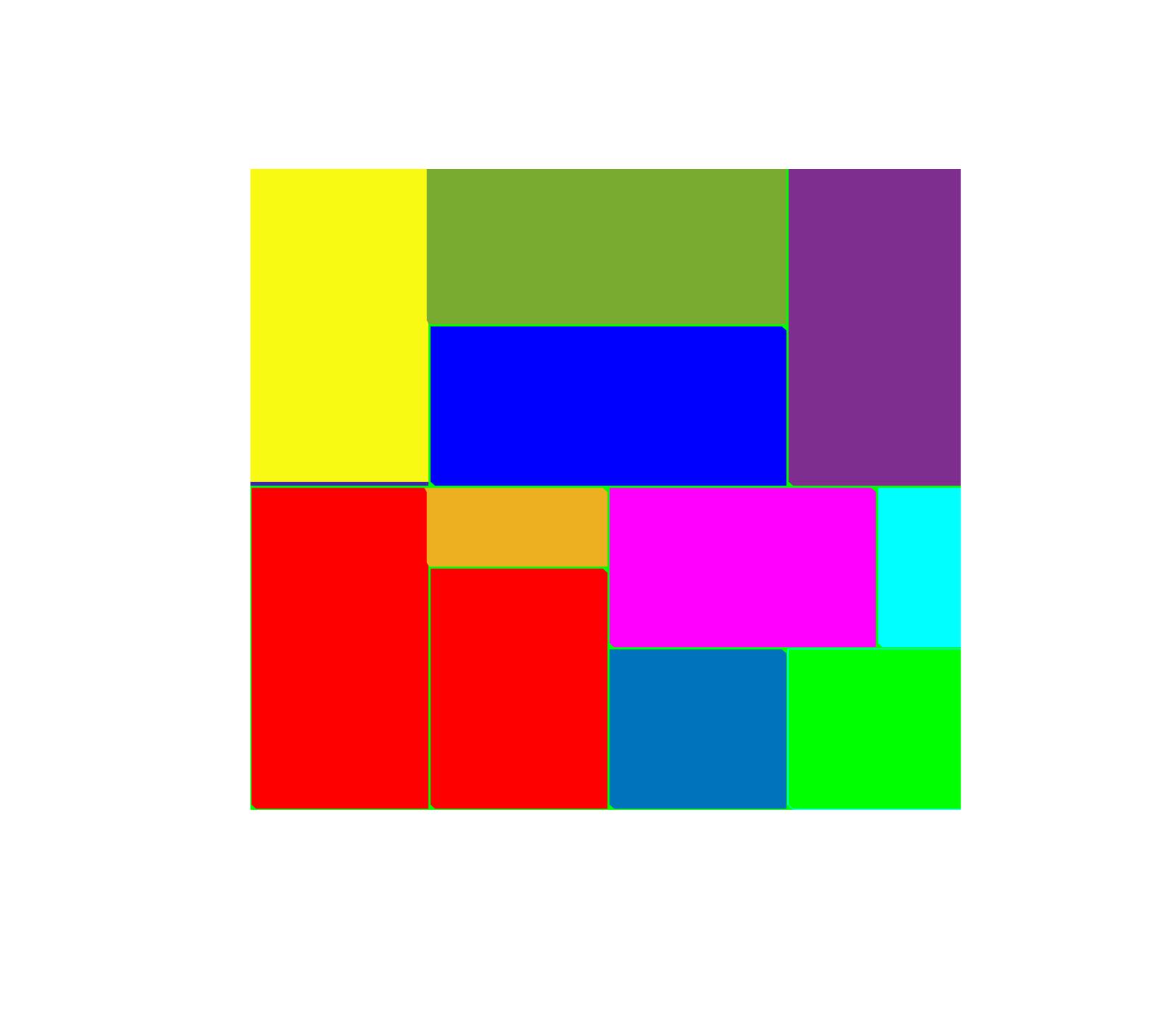}}
\subfigure[t=0.05]{
\includegraphics[width=0.30\textwidth,clip==]{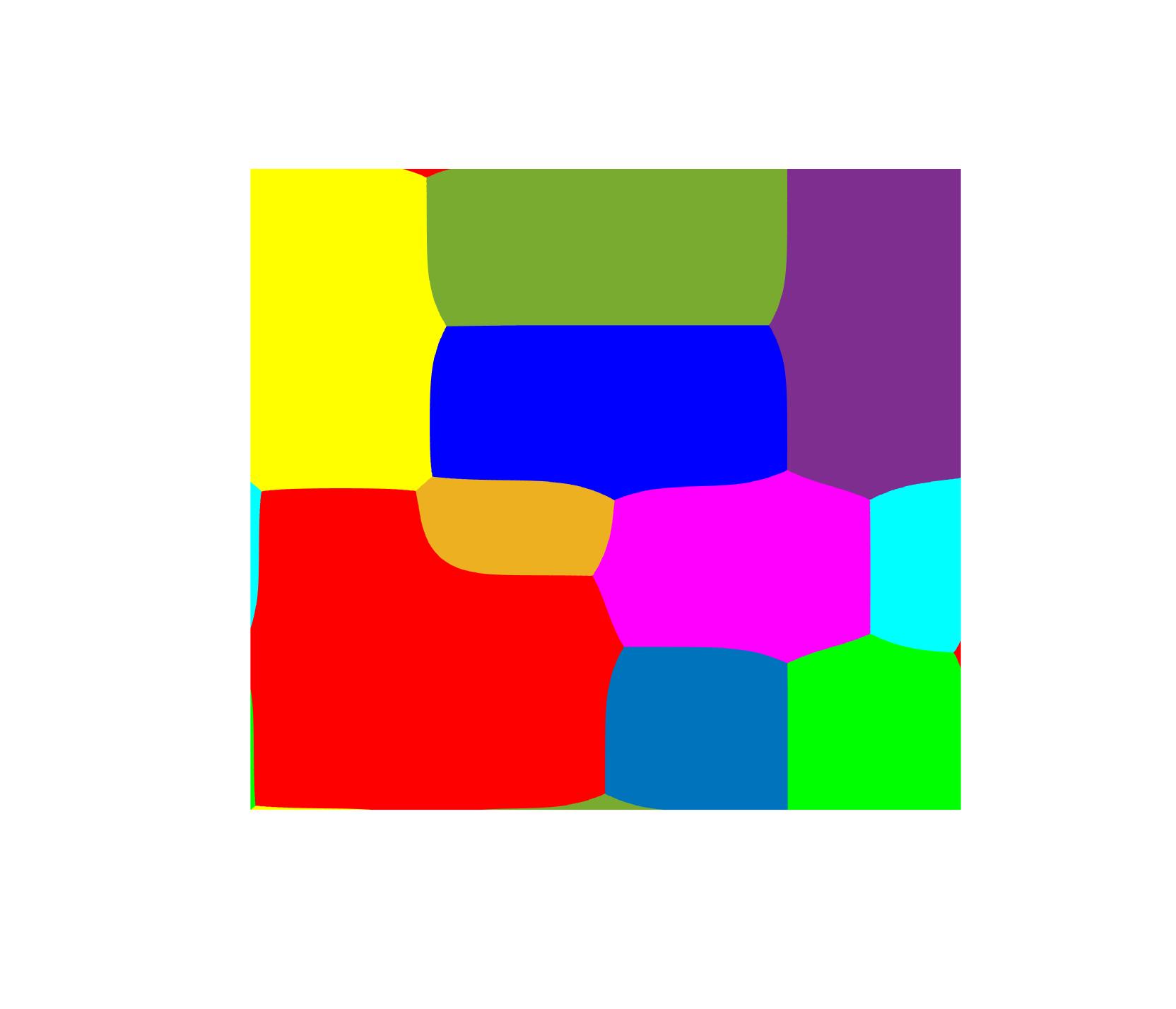}}
\subfigure[t=0.5]{
\includegraphics[width=0.30\textwidth,clip==]{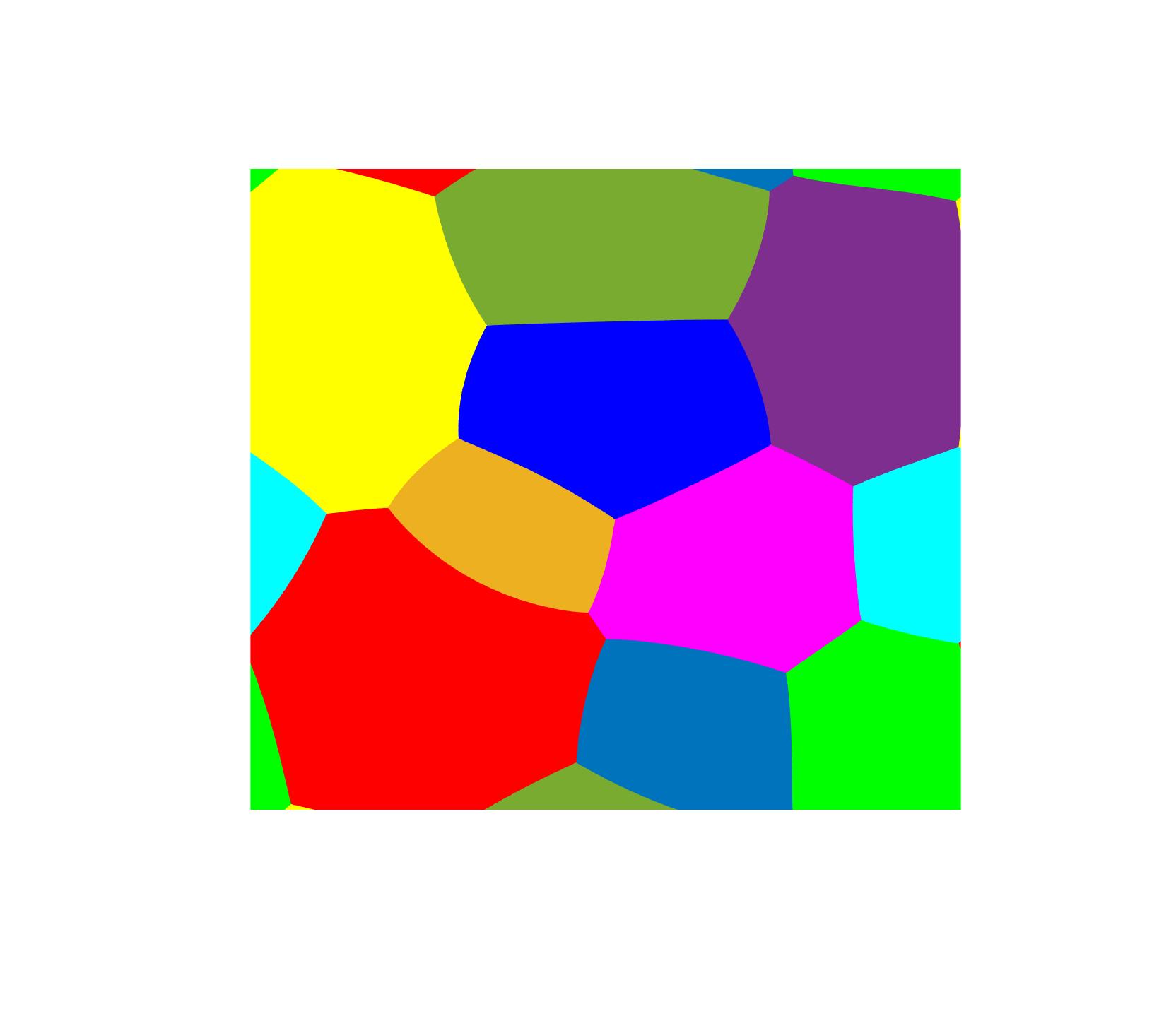}}
\subfigure[t=2]{
\includegraphics[width=0.30\textwidth,clip==]{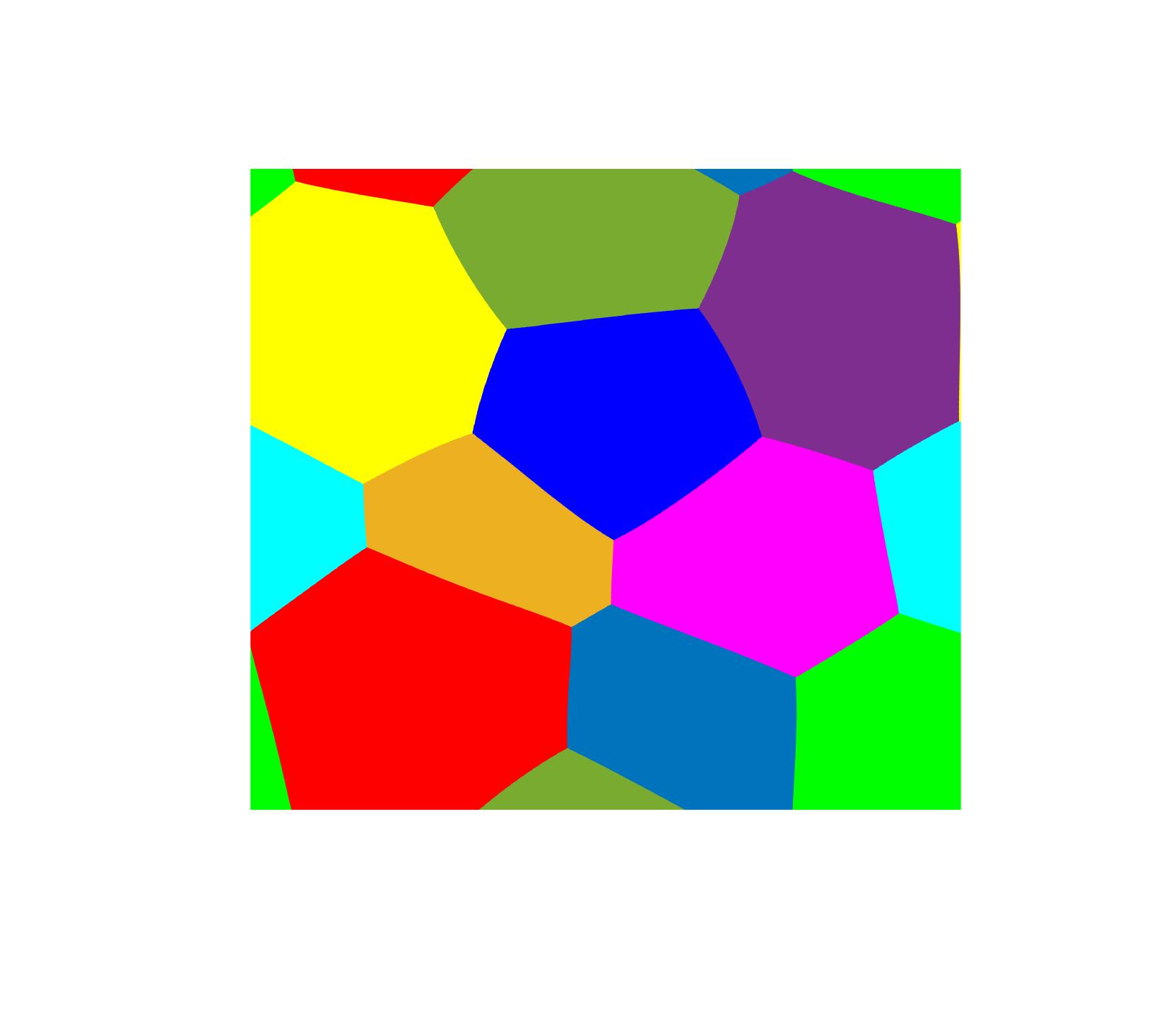}}
\subfigure[t=5]{
\includegraphics[width=0.30\textwidth,clip==]{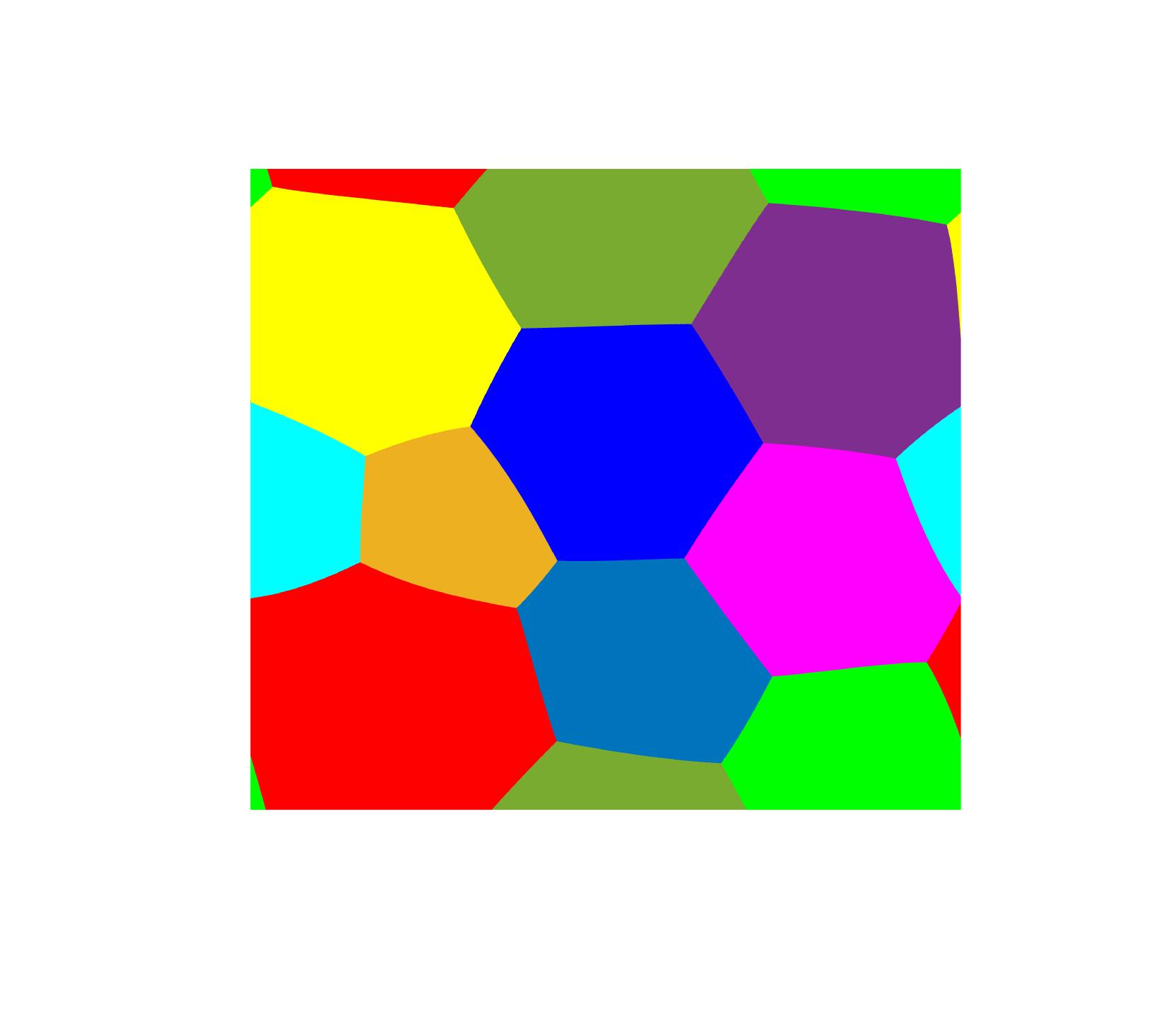}}
\subfigure[t=10]{
\includegraphics[width=0.30\textwidth,clip==]{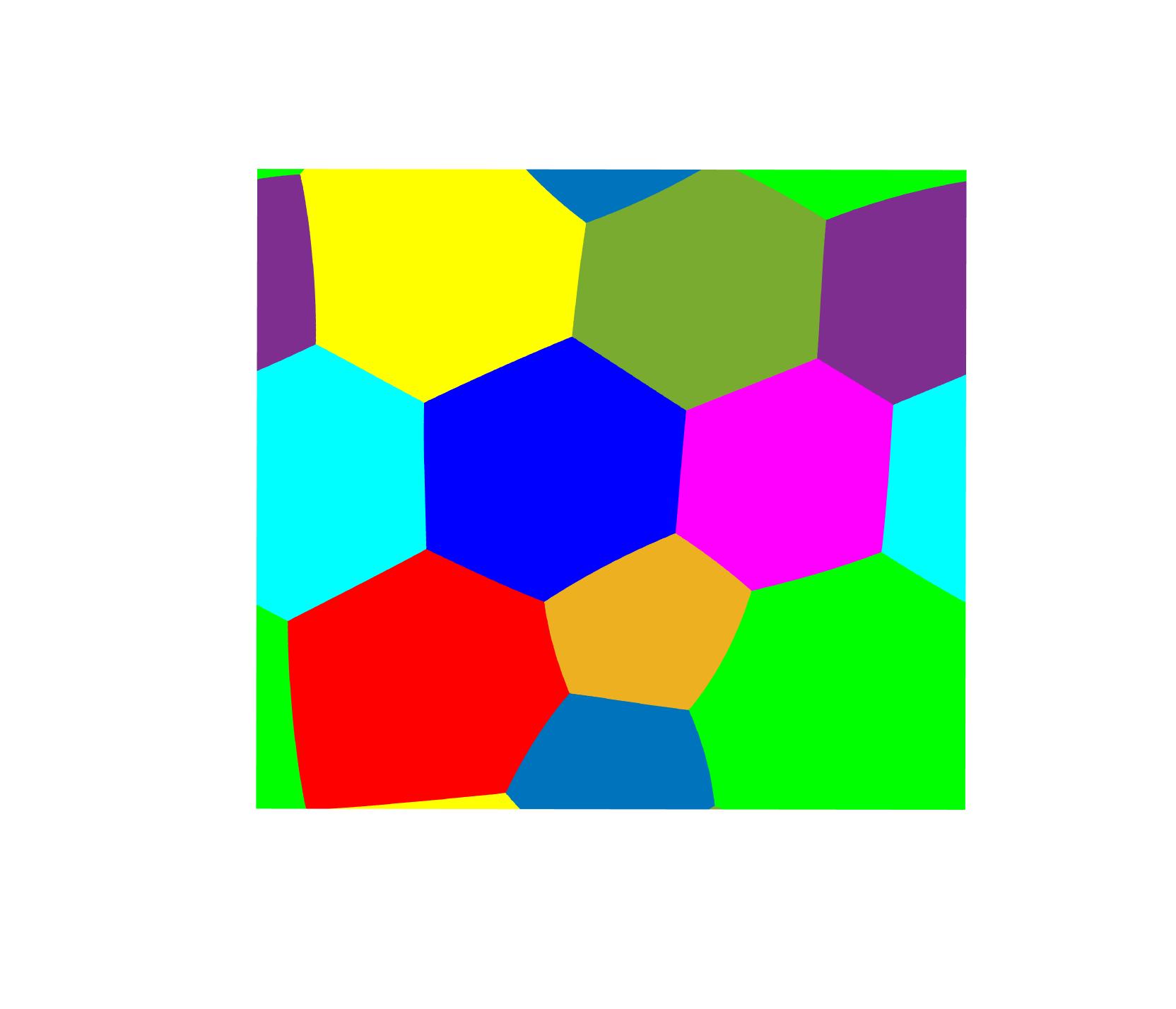}}
\caption{A 10-subdomain partition: initial partition and subdomains at times $t=0,0.05,0.5,2,5,10$ computed by the BDF2 scheme of first approach  with $\delta t=1\times 10^{-5}$.}\label{LGM-partition10-2D}
\end{figure}

\section{Concluding remarks}

How to construct efficient numerical schemes for gradient flows with global constraints is a challenging task. The popular penalty approach may lead to very stiff systems that are difficult to solve,  while a direct implementation of Lagrangian multiplier approach leads to nonlinear systems to solve at each time step.
We developed several efficient numerical schemes which can preserve exactly the constraints  for gradient flows with global constraints by combining  the SAV approach  with the Lagrangian multiplier approach.  These schemes are as efficient as the SAV schemes for unconstrained gradient flows,  i.e., only require solving linear  equations with constant coefficients at each time step plus an additional nonlinear algebraic system which can be solved at negligible cost,  and preserve exactly the constraints  for constrained gradient flows.  Moreover,  the second and third approaches lead to   schemes which are unconditionally energy stable.  And the Lagrangian multipliers in the third approach can be determined sequentially,  as opposed to coupled together in the second approach,  making it more robust and efficient than the second approach.

 We presented ample numerical results to compare the three approaches  with  the penalty approach.  Our numerical results indicate that  the proposed approaches can achieve accurate results and  preserve exactly the constraints with larger time steps than those allowed in the penalty approach.   And  the first and  third approaches  are more  robust and efficient than second approach.

Although we considered only time-discretization schemes in this paper,  they can be combined with any consistent finite dimensional Galerkin type approximations in practice,    since the stability  proofs are all based on  variational  formulations with all test functions in the same space as the trial functions.

\bibliographystyle{siamplain}
\bibliography{references}
\end{document}